\theoremstyle{plain}
    \newtheorem{thm}{Theorem}
    \newtheorem{prop}{Proposition}[section]
    \newtheorem{lemma}[prop]{Lemma}
    \newtheorem{problem}[prop]{Problem}
\theoremstyle{definition}
\theoremstyle{remark}
    \newtheorem{rem}[prop]{Remark}
\def\inter{\mathrm{int}}
\def\tr{{\mathrm{tr}}}
\def\uu{\mathfrak{u}}
\def\mmu{\sigma}
\def\be{\begin{equation}}
\def\ee{\end{equation}}
\def\bm{\begin{matrix}}
\def\em{\end{matrix}}
\newcommand{\C}{\mathbb{C}}\newcommand{\R}{\mathbb{R}}\newcommand{\Q}{\mathbb{Q}}\newcommand{\Z}{\mathbb{Z}}\newcommand{\N}{\mathbb{N}}
\newcommand{\T}{\mathbb{T}}
\newcommand{\cV}{\mathcal{V}}
\renewcommand{\setminus}{\smallsetminus}
\renewcommand{\Im}{\mathrm{Im}\;}
\newcommand{\SL}{\mathrm{SL}}
\newcommand{\id}{\mathit{id}}
\DeclareMathOperator*{\supp}{supp}
\def\SO{\mathrm{{SO}}}
\def\H{\mathbb{{H}}}
\def\B{\mathbf{{B}}}
\def\u{\mathbf{{u}}}
\def\Leb{\mathrm{{Leb}}}
\newcommand{\comm}[1]{}
\begin{document}

\title{On the Kotani-Last and Schr\"odinger conjectures}

\author[A.~Avila]{Artur Avila}
\address{CNRS UMR 7586,
Institut de Math\'ematique de Jussieu\\
175 rue du Chevaleret, 75013-Paris, FRANCE.
}
\address{IMPA,
Estrada Dona Castorina 110, Rio de Janeiro, Brasil} 
\urladdr{www.impa.br/$\sim$avila/}
\email{artur@math.jussieu.fr}

\begin{abstract}
In the theory of ergodic one-dimensional Schr\"odinger operators, ac
spectrum has been traditionally expected to be very rigid.  Two
key conjectures in this direction state, on one hand, that ac spectrum
demands almost periodicity of the potential, and, on the other hand, that
the eigenfunctions are almost surely bounded in the essential suport of the
ac spectrum.  We show how the repeated slow
deformation of periodic potentials
can be used to break rigidity, and disprove both conjectures.
\end{abstract}

\date{\today}

\maketitle




\section{Introduction}

In this paper we consider one-dimensional Schr\"odinger operators,
both on the real line $\R$ and on the lattice $\Z$.  In the first case, they
act on $L^2(\R)$ and have the form
\be
(H \uu)(t)=-\frac {d^2} {dt^2} \uu(t)+V(t) \uu(t),
\ee
while in the second case they act on $\ell^2(\Z)$ and have the form
\be \label {discrete}
(H\uu)_n=\uu_{n+1}+\uu_{n-1}+V(n) \uu_n.
\ee

We are interested in the so-called ergodic case, where one considers a
measured family of potentials defined
by the evaluation of a sampling function along the orbits of a dynamical system. 
Thus, in the first (continuum) case, we have $V(t)=v(F_t(x))$, where
$F_t$ is an ergodic flow and in the second (discrete) case $V(n)=v(f^n(x))$,
where $f$ is an ergodic invertible map.  We denote the implied fixed
probability measure by $\mmu$.
We will also assume below that flows, maps, and sampling functions
are continuous in some compact phase space $X$ and that $\supp \mmu=X$.

By general reasoning, the spectrum of ergodic operators is almost surely
constant.  In general, the spectral measure is not almost surely
independent of $x \in X$, but the ac part of the spectral measure
is.  There is much work dedicated to the understanding of the ac part
of the spectral measure, with most results so far pointing to very rigid
behavior \cite {K}, \cite {DeS}, \cite {CJ} (see also \cite {R} for
recent developments regarding non-ergodic potentials).
Two natural problems in this direction are:

\begin{problem}

Does the existence of an absolutely continuous component of the spectrum
(for almost every $x \in X$) imply that the potential is almost periodic?

\end{problem}

We recall that an almost periodic potential is one that can be obtained by
evaluating a continuous sampling function along an ergodic
translation of a compact
Abelian group.  Another way of formulating this is that the dynamics has a
topological almost periodic factor, through which the sampling function
factorizes.

\begin{problem}

Are all eigenfunctions bounded, for almost every energy, with respect to
the ac part of the spectral measure (for almost every $x \in X$)?

\end{problem}

Here, by an eigenfunction associated to energy $E$ we mean a
generalized solution of $H\uu=E\uu$ (i.e., without the requirement of belonging
to $L^2(\R)$ or $\ell^2(\Z)$).

For the first problem, an affirmative answer has been explicitly conjectured
in the discrete case, in what is now known as the {\it Kotani-Last Conjecture}
(recently popularized in \cite {D}, \cite {J}, and \cite {S}, see also the
earlier \cite {KK})

For the second problem, and also in the discrete case, the affirmative
answer would be a particular case of the so-called
{\it Schr\"odinger Conjecture} (see \cite {V}, \S 1.7),
according to which eigenfunctions should be bounded for almost every energy
with respect to the ac part of the spectral measure, for any
(possibly non-ergodic) potential.  We note that in the continuum case, the
corresponding general statement was known to be false if one does not assume
the potential to be bounded: The famous
counterexample in \cite {MMG} is indeed unbounded both from above and
from below (it is also sparse, hence non-ergodic).  As it turns out, in such
setup the absolutely continuous spectrum is not constrained by any
strict ``Parseval-like'' bound on the average size
of eigenfunctions.\footnote {Particularly, moments of growth do not
have to be spread out according to the energy, and in fact in \cite {MMG}
many eigenfunctions become simultaneously large (in short bursts).}
Such a bound (\ref {spectral}) is a key
difficulty in our setup, and it is what makes it more similar to another
situation of interest (square-integrable potentials), as we will discuss.

\begin{rem}

Here is one example of how those conjectures could be used to
deduce further regularity properties.
It is known that, almost surely in the essential support of ac spectrum,
there is a pair of linearly independent complex-conjugate eigenfunctions
$\uu$, $\overline \uu$,
satisfying $|\uu(t)|=U(F_t(x))$ or $|\uu_n|=U(f^n(x))$, according to the
setting, with $U:X \to (0,\infty)$ some $L^2$-function
(depending on $E$ but independent of $x$), see \cite {DeS}.
If the dynamics is minimal
then boundedness of the
eigenfunctions implies that $U$ is in fact {\it continuous} \cite {Y},
so if the
dynamics is almost periodic then the absolute value of these eigenfunctions
is itself almost periodic.

\end{rem}

\begin{rem}

There are of course many examples of almost periodic potentials with ac
spectrum, dating from the KAM based work of Dinaburg-Sinai \cite {DS}.
KAM approaches do tend to produce bounded eigenfunctions.  In \cite {AFK},
it has been proved that
if $f$ is an irrational rotation of the circle, and $v$ is analytic, then
(up to taking some sufficiently deep renormalization) a
non-standard KAM scheme converges almost everywhere in the essential support
of the ac part of the spectral measure, so that eigenfunctions are indeed
bounded as predicted by the Schr\"odinger Conjecture.

\end{rem}

In this paper we give negative answers to both problems,
in both the discrete and the continuous setting.

\begin{thm} \label {discreteunbounded}

There exists a uniquely ergodic map, a sampling function,
and a positive measure set $\Lambda \subset \R$
such that for almost every $x$, $\Lambda$ is
contained in the essential support of the absolutely continuous spectrum,
and for every $E \in \Lambda$ and almost every $x$,
any non-trivial eigenfunction is unbounded.\footnote {Notice that, by
general reasoning, for any $E$ in the spectrum there exists always some
$x$ with a one-dimensional subspace of bounded eigenfunctions.  On the other
hand, if the dynamics is minimal, then the existence of an
unbounded eigenfunction for some $x$ implies that there are unbounded
eigenfunctions for every $x$ (with the same $E$).}

\end{thm}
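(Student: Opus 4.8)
The plan is to realize the example as a limit-periodic potential over an odometer, built by what the abstract calls repeated slow deformation. Fix periods $p_1 \mid p_2 \mid p_3 \mid \cdots$ with $p_k \to \infty$ very fast, and periodic potentials $V_k$ of period $p_k$ with $V_{k+1} = V_k + W_k$, where $W_k$ is $p_{k+1}$-periodic and uniformly small, $\|W_k\|_\infty = \eta_k$ and $\sum_k \eta_k < \infty$. Then $V_\infty := \lim_k V_k$ exists in $\ell^\infty(\Z)$ and is limit-periodic; the base dynamics is the $(p_k)$-adic odometer $\Z_{(p_k)}$ (minimal and uniquely ergodic — only unique ergodicity is needed here), and the sampling function, a uniform limit of functions factoring through $\Z/p_k\Z$, is continuous. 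For a period-$p$ potential $V$ and energy $E$ let $A^{(p)}_V(E)\in\SL(2,\R)$ be the one-period monodromy; $E$ lies in the interior of a band precisely when $|\tr A^{(p)}_V(E)| < 2$, i.e. when the cocycle of $V$ over $\Z/p\Z$ at $E$ is conjugate to a cocycle of rotations, say by a conjugacy of uniform norm $L_V(E)$. The construction exploits that the rotation number of this conjugate cocycle (equivalently the trace of the monodromy) and the conjugacy norm $L_V(E)$ are independent: the trace can be kept bounded away from $\pm 2$, so that $E$ sits deep inside a band, while $L_V(E)$ is arbitrarily large; and if $L_V(E)$ is the conjugacy norm, then every transfer matrix $A^{(V)}_n(E)$, $n\in\Z$, has norm at most $L_V(E)^2$.

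The engine is a deformation lemma. Choose a positive-measure set $\Lambda$ of energies in the interior of the bands of every $V_k$, with $\tr A^{(p_k)}_{V_k}(E)$ bounded away from $\pm 2$ on $\Lambda$, and let $L_k(E)$ be the conjugacy norm of the period-$p_k$ cocycle of $V_k$ at $E\in\Lambda$. Put $m_k := p_{k+1}/p_k$, so the monodromy of $V_{k+1}$ over one period is a product of $m_k$ blocks, each a copy of $A^{(p_k)}_{V_k}(E)$ perturbed by a piece of $W_k$ of size $\eta_k$. The lemma asserts that $W_k$ can be chosen so that, for all $E\in\Lambda$: (i) the period-$p_{k+1}$ cocycle of $V_{k+1}$ at $E$ is still conjugate to a cocycle of rotations, with trace bounded away from $\pm 2$ and conjugacy norm $L_{k+1}(E) \le 2L_k(E)$ (so the bands near $\Lambda$ move by $O(\eta_k)$, $\Lambda$ survives to the next stage, and all transfer matrices of $V_{k+1}$ remain of size $O(L_{k+1}(E)^2)$); (ii) at the base point, $\|A^{(p_{k+1})}_{V_{k+1}}(0,E)\| \ge (1+c_k)\|A^{(p_k)}_{V_k}(0,E)\|$ with $\sum_k c_k = \infty$; and (iii) the most contracted direction of $A^{(p_{k+1})}_{V_{k+1}}(0,E)$ in $\P^1(\R)$ ranges over a fixed finite set of well-separated directions as $k$ varies. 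The mechanism behind (ii): each of the $m_k$ pieces of $W_k$ changes its block's monodromy by a first-order amount $\approx \eta_k$ times an explicit derivative, and the intervening rotations make these contributions add up coherently rather than cancel; tuning $\eta_k$ in a narrow window realizes, on the monodromy over $p_{k+1}$, a modification of the form ``conjugate by a matrix of norm $1+c_k$ misaligned with the current reducing conjugacy'', which raises the relevant norm while leaving the trace essentially fixed and keeps the higher-order (in $\eta_k$) error negligible. This forces $\eta_k \to 0$, compatibly with limit-periodicity.

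Granting the lemma, the conclusions follow. For the first assertion: by (i), $\|A^{(V_\infty)}_n(E)\| \le L_{k+1}(E)^2 + (\text{small})$ for $p_k \le n < p_{k+1}$ and $E\in\Lambda$, while $L_k(E)\le 2^k L_1(E)$ so $\log L_k(E) = O(k) = o(p_k)$; hence $\tfrac1n\log\|A^{(V_\infty)}_n(E)\| \to 0$, i.e. $L_\infty(E) = 0$, for every $E\in\Lambda$. By Kotani theory the essential support of the (almost surely constant) absolutely continuous spectrum is the essential closure of $\{E : L_\infty(E) = 0\}$, which contains $\Lambda$. For the second assertion: by (ii), $\|A^{(V_\infty)}_{p_{k+1}}(0,E)\| \asymp \|A^{(p_{k+1})}_{V_{k+1}}(0,E)\| \to \infty$, and for a monodromy whose trace is bounded away from $\pm 2$ the image of any unit vector far from the contracting direction is expanded by a factor comparable to the norm; so by (iii), for every nonzero $(\uu_0,\uu_{-1})$ there are infinitely many $k$ with $\|A^{(V_\infty)}_{p_{k+1}}(0,E)(\uu_0,\uu_{-1})\| \to \infty$, whence every nontrivial solution of $H\uu = E\uu$ is unbounded at the base point, and by minimality of the odometer (see the footnote) at every $x$, a fortiori at almost every $x$.

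The main obstacle is the quantitative heart of the deformation lemma, and it is precisely where the ``Parseval-like'' bound (\ref{spectral}) makes itself felt: one must simultaneously keep $\Lambda$ inside the bands under every deformation, force the conjugacy norms to diverge, and keep \emph{all} intermediate transfer matrices of size $O(L_k^2)$ so that the Lyapunov exponent still vanishes on $\Lambda$ — all while the $W_k$ are constrained to be small. The coherent-accumulation estimate — that $m_k$ tiny perturbations produce a controlled, prescribed first-order change in the $p_{k+1}$-step monodromy with negligible higher-order error — is what pins $\eta_k$ to a narrow admissible window and forces the divergence of the conjugacy norms, hence the growth of the eigenfunctions, to be slow; this slow, density-bounded growth is exactly what a bounded ergodic potential with absolutely continuous spectrum must exhibit, in contrast with the unbounded example of \cite{MMG}, and arranging it is the crux of the argument.
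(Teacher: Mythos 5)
There is a genuine gap at the quantitative heart of your argument: the ``deformation lemma'' is not proved, and as you state it, it asks for more than can be delivered. You require, at \emph{every} step $k$ and for \emph{every} $E$ in a fixed positive-measure set $\Lambda$, a gain $(1+c_k)$ in the monodromy norm with $\sum_k c_k=\infty$, while $\sum_k\eta_k<\infty$. A perturbation of size $\eta_k$ generically moves the conjugacy data only by $O(\eta_k)$, so divergence of $\sum c_k$ forces resonant amplification: the coherent accumulation across the $m_k$ blocks that you invoke requires the rotation phases $j\,\theta(E)$ to align with the perturbation, and for large blocks this alignment holds only on a subset of energies whose measure is comparable to the coherence tolerance, because $E\mapsto m\theta(E)$ equidistributes modulo $1$ on any interval. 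It cannot hold on all of a fixed $\Lambda$ at a single step -- this is exactly the difficulty the introduction emphasizes (``as energy changes, the eigendirections move''). The paper's proof does not claim per-step growth on a fixed set: in the analogue of Lemma \ref{asd12}, an increment of size $\delta/\gamma$ is obtained only with conditional probability of order $\gamma$ in the energy, and the divergence of the accumulated growth is extracted by a law-of-large-numbers argument over $P\sim\xi\delta^{-1}$ steps, while simultaneously the \emph{period-average} of $d(u(E,\cdot),i)$ is kept bounded so as to spare the ac spectrum. Without this probabilistic bookkeeping in the energy variable, your conditions (ii) and (iii) are unsubstantiated, and in the uniform form you state them they are almost certainly unattainable.

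The second gap is the passage to almost every $x$. Granting your lemma, you obtain unboundedness of all solutions only at the base point of the odometer, and then appeal to minimality. Minimality transfers only the \emph{existence} of an unbounded eigenfunction from one phase to every phase (that is what the footnote of the theorem asserts); it cannot transfer the property that \emph{every} nontrivial solution is unbounded, and indeed the same footnote records that for any $E$ in the spectrum there is always some $x$ carrying a bounded eigenfunction, so your conclusion ``at every $x$'' is false as stated, and since the orbit of the base point has measure zero you get nothing for almost every $x$. The paper avoids this by making the growth estimate quantitative and uniform over initial directions and over most phases: for each $k$ one proves $\inf_{\|w\|=1}\sup_{0\le l\le q_k}\|A(E,x,0,l)\cdot w\|\ge e^k/4$ for all $x$ outside a set of measure $(2C_1+4k)^{-1/2}\to 0$ (this is the role of the infimum over the parameter $t$ in the construction and of the rotation argument of Remark \ref{asd5}), and only then concludes for almost every $x$. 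Your construction, which concentrates the prescribed growth and the control of contracted directions at the single base point, does not yield this.
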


\begin{thm} \label {discretewm}

There exists a weak mixing uniquely ergodic map and a
non-constant sampling function such that
the spectrum has an absolutely continuous component for every $x$.

\end{thm}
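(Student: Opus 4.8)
The plan is to translate the statement into one about Schr\"odinger cocycles and then build the dynamics by an infinite sequence of \emph{slow deformations} of periodic potentials. Recall that for $V(n)=v(f^n(x))$ the equation $H\uu=E\uu$ is governed by the $\SL(2,\R)$-cocycle $(f,A_{E-v})$ with $A_w=\bigl(\begin{smallmatrix} w & -1\\ 1 & 0\end{smallmatrix}\bigr)$, and that by Kotani's theorem the absolutely continuous spectrum of $H_x$ is, up to essential closure, the set $\{E:\ L(E)=0\}$, where $L$ is the Lyapunov exponent. If $f$ is uniquely ergodic with $\supp\mmu=X$ then $f$ is minimal, and then it is enough to find such data for which $\{E:\ L(E)=0\}$ has positive Lebesgue measure: by Kotani this set lies in the absolutely continuous spectrum for a.e.\ $x$, and by minimality this persists for \emph{every} $x$. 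In fact I would arrange the stronger conclusion that on a positive-measure Cantor set $\Lambda$ of energies the cocycle $(f,A_{E-v})$ is continuously reducible to a cocycle of rotations, which forces $L\equiv 0$ on $\Lambda$.

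The construction is inductive. At stage $n$ one keeps an integer period $q_n$ (with $q_n\mid q_{n+1}$), a compact parameter space $\Theta_n$, and a continuous family $v^{(n)}_\theta$ of $q_n$-periodic potentials whose one-period cocycle is elliptic --- continuously conjugate to a constant rotation $R_{\rho_n(E)}$ by a conjugacy $B^{(n)}_\theta(E)$ depending continuously on $(\theta,E)$ --- for all $E$ in a set $\Lambda_n$, uniformly in $\theta$. To pass to stage $n+1$ I would set $q_{n+1}=m_n q_n$ with $m_n$ enormous, enlarge the parameter space by one more circle, and define $v^{(n+1)}$ by concatenating $m_n$ blocks, the $j$-th being $v^{(n)}_{\gamma_n(j)}$, where $j\mapsto\gamma_n(j)$ is a path in $\Theta_n$ of total length at most $\delta_n$ whose increments are governed by the new coordinate. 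The deformation is \emph{slow} (total drift $\delta_n$) but never stabilizes: the limit potential is locally within $\delta_n$ of a $q_n$-periodic one at every scale while the local periodic model keeps drifting, so the limit dynamics --- taken to be the hull of the limit potential, with $f$ the shift and $v$ evaluation at $0$ --- is not limit periodic. The key analytic input is an \emph{adiabatic persistence} estimate: for $m_n$ large enough and $\delta_n$ small enough, the stage-$(n+1)$ one-period cocycle is again elliptic, with reducing conjugacy within $\delta_n$ of the stage-$n$ one, for all $E$ in $\Lambda_{n+1}:=\Lambda_n$ minus a set of measure $<\delta_n$. (After conjugating by $B^{(n)}$ each block is a near-rotation by $q_n\rho_n(E)$ in a slowly drifting frame; a product of $m_n$ such stays in a fixed compact subset of $\SL(2,\R)$ unless $q_n\rho_n(E)$ is nearly resonant at scale $1/m_n$, an $E$-event of measure $O(\delta_n)$ since $\rho_n$ is bi-Lipschitz in $E$ on each band.) Choosing $\sum_n\delta_n$ small, $\Lambda=\bigcap_n\Lambda_n$ has positive measure, the $B^{(n)}$ converge uniformly on $\Lambda$, and the limit cocycle is reducible to rotations over $\Lambda$; hence $L\equiv 0$ on $\Lambda$, and every $H_x$ has a non-empty absolutely continuous component.

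It then remains to check the dynamical properties of the limit. Unique ergodicity should follow from the summability of the drifts: for a continuous observable, the average over any window of length $\ge q_n^2$ is within $O(\delta_n)$ of the spatial period-$q_n$ average, uniformly in where the window begins, so Birkhoff averages converge uniformly. Weak mixing is where the remaining freedom in choosing the $m_n$ and the paths $\gamma_n$ is spent: along a sufficiently fast (Liouville-type) sequence, in the spirit of Furstenberg's strictly ergodic weakly mixing examples, the limit is a tower of Anzai-type skew products, and a non-constant measurable eigenfunction of it would, read at some finite scale $n$, yield a measurable solution of a twisted cohomological equation for the $n$-th skew-product factor --- which the choice of $\gamma_n$ is designed to preclude.

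The hard part will be the adiabatic persistence lemma and the tension surrounding it. A single slow deformation must preserve reducibility on most of each band, with loss of measure and drift of the conjugacy that are summable and --- crucially --- uniform in the number of blocks $m_n$; this is a quantitative, resonance-avoiding, KAM-free statement about long products of slowly varying elliptic $\SL(2,\R)$ matrices. Simultaneously, the deformations must be slow enough for that lemma and for unique ergodicity, yet ``active'' enough (non-convergent, Furstenberg-generic) to yield weak mixing, and these requirements pull against one another; the real work is to choose the full array of data $(q_n,\Theta_n,\gamma_n,\delta_n)$ so that all the constraints hold at once in the limit.
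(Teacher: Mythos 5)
Your reduction (Kotani theory plus Last--Simon minimality: it suffices to produce a uniquely ergodic, weakly mixing base and a non-constant continuous sampling function with $L(E)=0$ on a set of energies of positive Lebesgue measure) is exactly the paper's, and your general scheme --- inductive slow deformation of coherent families of periodic potentials, with a quantitative ``adiabatic persistence'' estimate losing an $O(\delta_n)$ set of energies per step --- is in essence the paper's slow-deformation machinery (the Fayad--Krikorian based lemmas and their discrete variants). That lemma is the real analytic content and you only postulate it, but at least it is the right statement. A minor economy you miss: the paper never needs reducibility of the limit cocycle on $\Lambda$, only propagated norm bounds giving $L=0$ there, which avoids the delicate question of convergence of the conjugacies $B^{(n)}$.

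The genuine gap is weak mixing, precisely the part you leave to ``remaining freedom''. Two concrete problems. First, as you set it up, the parameter path $\gamma_n$ has total drift at most $\delta_n$ over the whole new period, with $\sum\delta_n$ small; since each finite-stage family $\theta\mapsto v^{(n)}_\theta$ is uniformly continuous and $\delta_n$ is chosen last, the natural outcome is that $v^{(n+1)}$ is uniformly within a summable error of a single $q_n$-periodic potential, so the limit potential is limit periodic, its hull is a compact abelian group and the shift is a translation on it --- the opposite of weak mixing (this is essentially the regime of the paper's proof of Theorem \ref{discreteunbounded}, whose example is almost periodic by design). The paper's mechanism differs in a crucial quantitative respect: the drift is small per block but \emph{macroscopic} over the new period --- in the $n$-crumbling the previous family is traversed at speeds $\frac{n+1}{n}$ and $\frac{2n+1}{2n}$ on the two halves, so the halves slide against each other by a full period --- and the loss of energy measure is nevertheless small because it is controlled by equidistribution of rotation numbers (Lemmas \ref{asd3}--\ref{asd4}), not by smallness of the total drift. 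Second, your route to weak mixing via ``a tower of Anzai-type skew products'' killed by a twisted cohomological equation is unsubstantiated and suspect: a skew product over an irrational rotation has the rotation as a factor, hence nonconstant eigenfunctions, and any finite stage that is a genuine factor of the limit forces its eigenvalues onto the limit. The paper avoids this by making the finite-stage dynamics \emph{not} factors: the limit is realized as the time-one map of a time-changed solenoidal flow, the projections fail to be equivariant because of the differing time changes, and every putative eigenvalue $\theta$ is killed directly --- conditional expectations onto the finite stages show $\theta j/(2n_{k+1})$ must be nearly an integer for all $1\le j\le[n_{k+1}/2]$, forcing $\theta=0$. Without an argument of this kind, and with the slowness constraints you impose, your sketch does not yield weak mixing (and as written risks not even breaking almost periodicity).
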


\begin{thm} \label {continuumunbounded}

There exists a uniquely ergodic flow and a sampling function,
such that
the spectrum is purely absolutely continuous
for almost every $x$, and for almost every energy in the spectrum,
and almost every $x$, any non-trivial eigenfunction is unbounded.

\end{thm}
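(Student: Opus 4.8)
The plan is to construct the potential $V$ on $\R$ as a limit of a hierarchically organized sequence of potentials, each obtained by concatenating long blocks of \emph{periodic} potentials whose parameters are allowed to drift slowly, and then to realize $V$ as the evaluation of a continuous function $v$ along a uniquely ergodic translation flow — namely the translation flow on the hull $X$ of $V$, with invariant measure $\mmu$ and $\supp\mmu=X$. The continuum setting is convenient here because a periodic Schr\"odinger operator on $\R$ has infinitely many spectral bands, and in the interior of a band the monodromy matrix is elliptic, conjugate to a rotation by a matrix whose norm blows up as one approaches the band edge; this is the amplification we will exploit. The scheme parallels the discrete construction behind Theorem~\ref{discreteunbounded}, with the bonus that tighter quantitative control in the continuum forces the spectrum to be \emph{purely} absolutely continuous.

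\emph{The building block.} Fix a nondegenerate interval $\Lambda$ of energies and a smooth family $\{v_\omega\}_{\omega\in\T}$ of $1$-periodic potentials such that, for every $\omega$ and every $E\in\Lambda$, the point $E$ lies in the interior of a spectral band of the operator with potential $v_\omega$. Then the monodromy $M_\omega(E)\in\SL(2,\R)$ is elliptic and one may write $M_\omega(E)=B_\omega(E)\,R_{\phi_\omega(E)}\,B_\omega(E)^{-1}$ with $B_\omega(E)$ continuous in $(\omega,E)$. We design the family so that $\|B_0(E)\|$ is uniformly bounded while, for $\omega$ near a point $\omega_*$, a band edge is very close to $\Lambda$ and $\|B_{\omega_*}(E)\|$ is large, of a prescribed size $h$. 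The elementary estimate at the heart of the argument is: if we concatenate, over an interval of length $T$, the pieces $v_{\omega(s)}$ along a slow loop $\omega\colon 0\to\omega_*\to 0$, then (i) the transfer matrix over $[0,T]$ is again bounded of order $1$, because slow variation keeps the cocycle uniformly elliptic and the endpoint conjugacies coincide; (ii) a solution which at time $0$ has size $O(1)$ in the (essentially standard) $B_0$-frame is forced to have size $\gtrsim h$ near the midpoint, where the adapted frame is $B_{\omega_*}$; (iii) the contribution of this ``window'' to $\frac1T\int_0^T\|M_E(t)\|^2\,dt$ is $O\bigl(h^2\cdot(\text{width of the resonant arc})\bigr)$, which is kept $O(1)$ by making the arc thin and the window long; and (iv) the window contributes nothing to the Lyapunov exponent.

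\emph{Hierarchical assembly, unique ergodicity, and conclusion.} One concatenates countably many such windows, of heights $h_n\to\infty$, separated by long stretches of the undeformed $v_0$, placing the level-$n$ window so that at every time $T$ only boundedly many windows are ``$L^2$-active'', whence $\liminf_{T\to\infty}\frac1T\int_0^T\|M_E(t)\|^2\,dt<\infty$ for every $E\in\Lambda$. To make the hull uniquely ergodic the windows are arranged self-similarly: each level-$n$ pattern is repeated enough times, with a well-defined frequency, before the level-$(n+1)$ deformation is switched on, so that every finite sub-pattern of $V$ occurs along $\R$ with uniform frequency; this gives unique ergodicity of the translation flow, while the unbounded fluctuation of the $B_\omega$'s destroys equicontinuity, so the flow is not almost periodic. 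Since slow deformation prevents any exponential growth of the transfer matrices at every energy of the spectrum, $L(E)=0$ throughout it; combined with the $L^2$-average bound above, Kotani theory and the Last--Simon criterion give that, for $\mmu$-a.e.\ $x$, the spectrum is purely absolutely continuous and $\Lambda$ lies in its essential support. On the other hand, by (ii) the solution that follows the amplification windows has amplitude $\gtrsim h_n$ at the level-$n$ times, hence is unbounded; discarding the band-edge energies and, for each energy, the at-most-one-dimensional space of bounded solutions yields that for almost every energy in the spectrum and $\mmu$-a.e.\ $x$ every nontrivial eigenfunction is unbounded.

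\emph{Main obstacle.} The delicate point is to make the three requirements hold \emph{simultaneously for almost every $E\in\Lambda$}: amplification (ii) must occur for a.e.\ $E$ along the \emph{same} windows, which forces the resonance of the building block to live on the whole interval $\Lambda$ and the conjugacies $B_\omega(E)$ to be controlled uniformly in $E$; the $L^2$-bound (iii) must hold for \emph{every} $T$, which limits how fast $h_n$ may grow and how densely windows may be placed; and the combinatorics must be rigid enough to be uniquely ergodic yet loose enough to accommodate the deformations. Balancing amplification height against $L^2$-cost against the unique-ergodicity bookkeeping — and verifying that slow deformation really does preserve ellipticity uniformly over $\Lambda$ while moving band edges close to it — is the technical heart of the proof.
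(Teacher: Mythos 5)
Your proposal captures the general philosophy (slow deformation of periodic potentials, adiabatic following of the elliptic fixed point, unwinding of growth, $L^2$-cost bookkeeping), but the central amplification step is asserted rather than proved, and as stated it cannot work. You want, at each level $n$, a \emph{single} slow window along which the conjugacies $B_{\omega}(E)$ reach a prescribed size $h_n$ \emph{simultaneously for almost every} $E\in\Lambda$, while all of $\Lambda$ stays in band interiors, the block monodromy stays bounded, and the cost is made negligible by thinning the resonant arc. These requirements are quantitatively incompatible: $\|B_{\omega}(E)\|$ (equivalently $d(u_\omega(E),i)$) is large only for $E$ near a band edge of $v_\omega$, so a fixed interval $\Lambda$ cannot see uniform largeness $h$ unless the deformation itself is large; and a deformation big enough to produce a definite gain $h$ for a.e.\ $E$ in one pass destroys a correspondingly large proportion of the spectrum (the resonant energies where the trace of the block monodromy leaves $(-2,2)$), which is exactly the constraint coming from (\ref{pi3}) discussed in the introduction. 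The paper's resolution is not a one-shot deterministic amplification but a probabilistic accumulation: each padding step (Lemma \ref{bla21}) uses a tiny deformation $\delta$, loses only a $C'\delta$-proportion of spectrum, keeps the time-average of $d(u(E,t),i)$ essentially unchanged, and yields a gain of order $\delta/\gamma$ only on an energy set of measure of order $\gamma$ --- a heavy-tailed random gain; Lemma \ref{bla22} then iterates $P\sim\xi\delta^{-1}$ such steps and uses a Law of Large Numbers argument (the i.i.d.\ minorants $W_j$) to show that for a.e.\ $E$ the accumulated gains exceed any prescribed $C_0$, off a set of measure $\xi$. This mechanism --- growth appearing only at rare, energy-dependent random scales --- is the heart of the proof and is absent from your proposal; without it, your points (i)--(iii) cannot hold simultaneously for a.e.\ $E\in\Lambda$ at a fixed level.

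The second gap is purity of the ac spectrum. From $L(E)=0$ on the spectrum together with an $L^2$-average bound on transfer matrices over $\Lambda$, Last--Simon/Kotani theory gives that $\Lambda$ lies in the essential support of the ac part, but it does \emph{not} exclude singular (in particular singular continuous) components, so it does not yield ``purely absolutely continuous for almost every $x$.'' The paper obtains purity from Kotani's Theorem \ref{3041}, which requires in addition the absolute continuity of the integrated density of states; securing this in the limit is precisely the role of the $(\epsilon,M)$-niceness bookkeeping (Lemma \ref{bla3015}), combined with the semicontinuity of the spectrum (Lemmas \ref{bla3014new}, \ref{bla3014}) to control how much spectrum each step creates or destroys. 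Your proposal has no substitute for this, and it also only controls a fixed interval $\Lambda$, whereas the theorem asserts unboundedness for almost every energy in the whole spectrum --- the paper handles this by running the crookedness/goodness/niceness estimates with parameters $(2^{-j},2^j,2^j)$ covering energies up to $2^j$ at step $j$. (Your realization of the limit dynamics via the hull of a concatenated potential, versus the paper's time-changes of solenoidal flows and projective limits, is a legitimate variant and not the issue; the two items above are the substantive gaps.)
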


\begin{thm} \label {continuumwm}

There exists a weak mixing uniquely ergodic flow and a
non-constant sampling function, such that
the spectrum is purely absolutely continuous for every $x$.

\end{thm}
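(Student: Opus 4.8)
The plan is to derive Theorem \ref{continuumwm} from Theorem \ref{continuumunbounded}, mirroring the way the weak-mixing statement in the discrete setting (Theorem \ref{discretewm}) should follow from the unboundedness construction (Theorem \ref{discreteunbounded}). The key observation is that the flow $F_t$ constructed for Theorem \ref{continuumunbounded} is built by a repeated slow deformation of periodic potentials, and such constructions are naturally carried out over a base which can be chosen to be an Anzai-type skew product (or, more generally, an inverse limit of such) over an irrational rotation. The point is that the slow deformation is governed by a sequence of parameters that one is free to perturb; a Baire category or explicit genericity argument in the space of admissible parameter sequences will show that, outside a meager set, the resulting uniquely ergodic flow is weak mixing, while the spectral conclusion (pure ac spectrum for every $x$) is robust under these perturbations.

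First I would isolate the precise mechanism by which Theorem \ref{continuumunbounded} produces pure ac spectrum for a.e.\ $x$, and upgrade ``a.e.\ $x$'' to ``every $x$'': since the flow is uniquely ergodic and the sampling function continuous, and since the ac spectrum is $x$-independent, one only needs to rule out the (a priori possible) appearance of extra singular spectrum for exceptional $x$; this is where minimality and unique ergodicity of the constructed flow, together with the Kotani-theoretic fact that the essential closure of the set of positive-Lyapunov-exponent energies is empty in our construction, give purely ac spectrum for all $x$. Second, I would revisit the base dynamics: the deformation scheme in the proof of Theorem \ref{continuumunbounded} should already be set up so that the base is (conjugate to) a minimal translation-type flow on a solenoidal group or a skew product; I would then introduce into the iterative construction an additional sequence of ``shear'' or ``twist'' parameters that do not affect the potential-deformation estimates but do act on the base.

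The main technical step is then to prove that for a generic (in the sense of Baire category within the compact metrizable space of allowed parameter sequences, with the spectral constraints defining a $G_\delta$) choice of these twist parameters the flow is weak mixing, i.e.\ has no non-constant measurable eigenfunctions; the standard route is to show that for each rational frequency the set of parameter sequences producing a corresponding eigenvalue is closed with empty interior, using that an eigenfunction for the limiting flow would force near-resonance at every sufficiently deep stage of the construction, which the twist parameters can be chosen to destroy. One must simultaneously maintain unique ergodicity throughout the inductive construction (this is automatic if the base is built as an inverse limit of uniquely ergodic periodic/finite-rank pieces with controlled matching), and maintain the quantitative estimates from Theorem \ref{continuumunbounded} that guarantee the ac spectrum persists --- these estimates are stable because they only involve the $C^0$-size and the slowness of the deformations, which the twist does not change.

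The hard part will be the compatibility of the three requirements: the slow-deformation estimates that force ac spectrum are rigid and essentially dictate the geometry of the construction, so there is limited freedom left, and one must verify that the twist parameters live in precisely the ``unused'' directions and that perturbing them neither speeds up the deformation (which would risk destroying ac spectrum) nor breaks unique ergodicity (which would risk exceptional $x$ with bad spectrum). I expect this to require a careful bookkeeping of which parameters in the construction of Theorem \ref{continuumunbounded} are actually free, followed by a genericity argument showing that among all admissible parameter sequences the weak-mixing ones are residual; once that bookkeeping is in place, the weak-mixing criterion itself (no rational eigenvalue survives) is a routine, if delicate, resonance-cancellation argument of the kind standard in the construction of weak mixing uniquely ergodic flows.
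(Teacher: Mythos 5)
Your plan has two genuine gaps, and both concern the parts of the statement that carry the real content. First, the upgrade from ``purely ac for almost every $x$'' to ``for every $x$'' does not follow from the ingredients you cite: Kotani theory (Theorem \ref{3041}) only ever gives absolutely continuous spectral measures for almost every phase, and vanishing Lyapunov exponent on the spectrum together with minimality and unique ergodicity does not preclude singular spectrum at exceptional phases (Last--Simon only gives phase-independence of the \emph{essential support of the ac part}, not absence of a singular part). So even if you successfully grafted weak mixing onto the construction of Theorem \ref{continuumunbounded}, you would land on the ``almost every $x$'' statement, not the one claimed. The paper gets ``every $x$'' by a different mechanism: it never invokes Kotani for the limit operator, but instead controls the spectral measures of the periodic approximants \emph{uniformly over all shifts} (the $(\epsilon,C,M)$-uniformity of Lemma \ref{blaunif1} and its stability under padding, Lemma \ref{blaunif}), and then passes these quantitative bounds to the limit for every single $x$ using continuity of spectral measures under locally uniform convergence of potentials (Lemma \ref{blacont}). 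Without some phase-uniform quantitative control of this kind, your argument cannot reach the stated conclusion.

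Second, your weak-mixing step is both underspecified and, as formulated, insufficient: excluding an eigenfunction ``for each rational frequency'' by a Baire argument does not give weak mixing, since eigenvalues of a flow range over all of $\R\setminus\{0\}$, and the set of parameter sequences admitting \emph{some} eigenvalue is not a countable union of the closed sets you describe; moreover you give no mechanism by which the ``twist'' parameters destroy near-resonances while provably not disturbing the delicate padding estimates that produce the ac spectrum in Theorem \ref{continuumunbounded}. The paper avoids all of this by not deriving Theorem \ref{continuumwm} from Theorem \ref{continuumunbounded} at all: it runs a separate, simpler construction in which each inductive step is a $(\delta,n)$-padding consisting of two blocks whose periods differ by $\delta$, producing an explicit, quantified de-synchronization (the $(N,F')$-mixed property, stable under lifts by Lemma \ref{blamixed1}), and then shows by a martingale/conditional-expectation argument (Lemma \ref{blamixed2}) that a projective limit of increasingly mixed flows has no non-zero eigenvalue whatsoever. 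If you want to salvage your route, you would need (i) a phase-uniform spectral estimate replacing the appeal to Kotani, and (ii) a concrete resonance-destroying mechanism that handles all real frequencies simultaneously and is compatible with the unboundedness construction --- which is essentially a harder theorem than the one you are asked to prove (the paper only remarks, without proof, that the example of Theorem \ref{continuumunbounded} can be made weak mixing).
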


We recall that weak mixing means the absence of a measurable almost periodic
factor.  In particular, potentials associated to non-constant
sampling functions are never almost periodic.

\begin{rem}

One may wonder whether there is some
natural condition (stronger than lack of almost periodicity) on the dynamics
that would prevent the existence of ac spectrum.  After we announced in 2009
the earliest result of this work (a less precise version of Theorem \ref
{discretewm}), Svetlana Jitomirskaya asked us whether weak mixing
would be such a condition.  Though our original
(unpublished) construction did not yield
weak mixing, the underlying mechanism could indeed be used to
answer her question, as shown in the argument we present here.

\end{rem}

\begin{rem}

Unbounded eigenfunctions can appear with or without almost periodicity:
the example provided in the proof of
Theorem \ref {continuumunbounded}
can be shown to be weak mixing (though it is not done
here), while the example provided in the proof of Theorem \ref
{discreteunbounded} is almost periodic.  In the other direction, the proofs
of Theorems \ref {discretewm} and \ref {continuumwm} (see Remarks \ref
{blaremark2}
and \ref {blaremark4}) show that bounded eigenfunctions are also
compatible with weak mixing.

\end{rem}

Our methods do give considerable more control on the continuum case (in that
we get control on the entire spectrum).  The
arguments are also much simpler.  For this reason, we first develop all
arguments in full detail for the continuum case.  We then describe more
leisurely the additional complications involved in the discrete case.

\subsection{Further perspective}

Besides its natural interest in the theory of orthogonal polynomials and
one-dimensional Schr\"odinger operators, much of the motivation behind
the Schr\"odinger Conjecture lies in its
interpretation as a generalization of the sought after
non-linear version of Carleson's Theorem
about pointwise convergence of the Fourier series of an
$L^2(\R/\Z)$ function.  Recall that this theorem
(which solved Lusin's Conjecture)
is equivalent to the statement that for any sequence of complex
numbers $\{\lambda_n\}_{n \in \N}$ with $\sum |\lambda_n|^2<\infty$, and for
almost every $\theta \in \R$, the series $\sum \lambda_n e^{2 \pi i n
\theta}$ is bounded.

One simple formulation of a (conjectural) non-linear
version of Carleson's Theorem
goes as follows: for any sequence of $\SL(2,\R)$ matrices $\{A_j\}_{j \in
\N}$ such that
\be
\sum (\ln \|A_j\|)^2<\infty
\ee
and for almost every $\theta \in \R$, the sequence
$A^{(n)}(\theta)=A_n R_\theta \cdots A_1 R_\theta$ is bounded
(here $R_\theta$ is the rotation of angle $2 \pi \theta$).  To see the
connection, notice that it is enough (by polar decomposition) to consider the
case where $A_j=D_{e^{\lambda_j}} R_{\beta_j}$, where $\beta_j \in \R$ is arbitrary
and $\lambda_j \geq 0$ satisfy $\sum |\lambda_j|^2<\infty$.
Expand
\be
\begin{pmatrix} e^{\lambda_j}&0\\0&e^{-\lambda_j}\end{pmatrix}=
\sum_{k \geq 0} \frac {\lambda_j^k} {k!}
\begin{pmatrix} 1&0\\0&(-1)^k \end{pmatrix},
\ee
and then expand the product to get
\be
A^{(n)}(\theta)=\sum_{m \geq 0} B_{m,n}(\theta),
\ee
where the coefficients of $B_{m,n}$ are homogeneous polynomials of degree
$m$ on the $\lambda_j$.  Then a direct computation gives, with
$\alpha_j=\sum_{j' \leq j} \beta_{j'}$, $B_{0,n}=R_{n \theta+\alpha_n}$ (which thus
has unit norm), while 
\be
B_{1,n}=R_{n \theta+\alpha_n} \sum_{j=1}^n \lambda_j
\begin{pmatrix} \cos 4 \pi (\alpha_j+j
\theta)&-\sin 4 \pi (\alpha_j+j\theta)\\
-\sin 4 \pi (\alpha_j+j\theta)&-\cos 4 \pi (\alpha_j+j \theta) \end {pmatrix},
\ee
so that Carleson's Theorem is equivalent to the boundedness of the
$B_{1,n}$.

One reason to hope for the almost sure boundedness of the sequence
$A^{(n)}(\theta)$ is the validity of an analogue of Parseval's
Theorem:
taking $N(A)=\ln \frac {\|A\|+\|A\|^{-1}} {2}$ (which is asymptotic to $(\ln
\|A\|)^2$ when $\|A\|$ is close to $1$), we get
\be \label {pi}
\int N(A^{(n)}(\theta)) d\theta=\sum_{j=1}^n N(A_j).
\ee
This presents a quite strict constraint to the construction of any
counterexample.  Indeed, (\ref {pi}) implies that
$\|A^{(n)}(\theta)\|$ is often bounded: any growth one sees in a certain moment
must be compensated later.  This oscillation is rather hard to achieve in
the nonlinear setting: the product of two large
$\SL(2,\R)$ matrices is usually even larger than each factor, unless
there is some rather precise alignement between their polar decompositions. 
However, any such alignement would appear likely to be
destroyed when $\theta$ changes.  (Another way to see the difficulty it to
recall that the Brownian motion in the hyperbolic plane
$\SL(2,\R)/\SO(2,\R)$ diverges linearly, while in the real line
it is recurrent.)

There is nothing sacred about the above setup (which we chose to start with
only for the transparency of the various formulas), and there are many
alternative settings where a non-linear analogue of Carleson's Theorem is
expected to hold.  The basic theme to keep in ming is the goal of
showing almost sure boundedness of
square-summable perturbations of a parametrized infinite product of
elliptic matrices, in some setting where some analogue of
Parseval's Theorem holds.  For a more detailed discussion (with slightly
different setup), see the work of Muscalu-Tao-Thiele \cite {MTT}.

\subsubsection{Schr\"odinger setting}

The eigenfunctions of discrete Schr\"odinger operators (\ref {discrete})
with eigenvalue $E$ satisfy a
second-order difference equation which can be expressed in matrix form
\be
A(E,n,n+1)
\cdot \begin{pmatrix} \uu_n \\ \uu_{n-1} \end{pmatrix}=\begin{pmatrix}
\uu_{n+1} \\ \uu_n \end{pmatrix}
\ee
where
\be
A(E,n,n+1)=\begin{pmatrix} E-V(n) & -1 \\ 1 & 0 \end{pmatrix}.
\ee
Thus writing $A(E,m,n)=A(E,n-1,n) \cdots A(E,m,m+1)$, $n>m$,
we see that the the boundedness
of eigenfunctions is equivalent to the boundedness of the sequences
$A(E,0,n)$ and $A(E,-n,0)$.

It turns out that if $V \in \ell^2(\Z)$ then the essential support of
the ac spectrum is $(-2,2)$ (a result of Deift-Killip \cite {DK}).
This is also the set of $E$ such that the
unperturbed matrices $\begin{pmatrix} E&-1\\1&0 \end{pmatrix}$ are elliptic. 
Thus the expected ``Carleson's Theorem for Schr\"odinger operators'' is just
the Schr\"odinger Conjecture restricted to potentials in $\ell^2(\Z)$. 
A partial result in this direction,
under a stronger decay condition, is obtained in
\cite {CKR} (see also \cite {MTT1} for a discussion of the limitations of
this approach in the consideration of general potentials in $\ell^2(\Z)$).

Why would one want to believe in the Schr\"odinger Conjecture for
ergodic potentials?  In our view, it is due to
the validity of an inequality, reminiscent of that implied by
Parseval's Theorem in the case of decaying potentials.  We state it in
terms of eigenfunctions: For almost every $x \in X$,
and for almost every $E$ in the essential support $\Lambda$ of the ac
spectrum, there is a pair of linearly independent complex conjugate
eigenfunctions $\uu(E,x,n)$ and $\overline {\uu(E,x,n)}$, normalized so that
the Wronskian
$\uu(E,x,n) \overline {\uu(E,x,n-1)}-\overline {\uu(E,x,n}) \uu(E,x,n-1)$ is
$i$, such that
\be \label {spectral}
\frac {1} {2 \pi} \int_\Lambda |\uu(E,x,n-1)|^2+|\uu(E,x,n)|^2 dE \leq 1,
\ee
with equality in the case of pure ac spectrum.\footnote {In general,
$\frac {1} {2 \pi} \int_\Lambda |\uu(E,x,n-1)|^2+|\uu(E,x,n)|^2 dE$ is
half the sum of the weights of the ac part of the spectral measures
associated to $\delta_n$ and $\delta_{n-1}$.}
Such an inequality is of course all that is needed to deduce a bound on the
average size of the transfer matrices:
\be \label {pi3}
\frac {1} {4 \pi}
\int_\Lambda \|A(E,m,n)\|+\|A(E,m,n)\|^{-1} dE \leq 1.
\ee

The ergodic setup has one advantage and one disadvantage with respect to the
decaying setup (as far as constructing counterexamples is concerned):
\begin{enumerate}
\item There is no need to ``spare potential'' in trying to promote
eigenfunction growth,
\item But potential we introduce must reappear (infinitely often), hence (by the
trend of products of large matrices to get larger) one risks promoting
``too much growth'', destroying the ac spectrum due to the need to
obey (\ref {pi3}).  (In other words, we have to ``spare ac spectrum''.)
\end{enumerate}

The effects of recurrence on transfer matrices growth is hard to neglect:
in particular, in the ergodic
case, eigenfunctions are known \cite {CJ}
to be everywhere bounded in any open interval
in the essential support of the ac spectrum (in the $\ell^2$ case,
the essential support is an interval, and one certainly can not hope for
boundedness everywhere).

In fact, it is quite difficult to achieve ac spectrum in the ergodic
context, which is of course what is behind the formulation of the
Kotani-Last Conjecture.  One of the known obstructions is Kotani's
Determinism Theorem \cite {K}, which can be stated as follows: If there
is some ac
spectrum, then the stochastic process $\{V(n)\}_{n \in \Z}$ is deterministic
in the sense that perfect knowledge of the past implies perfect knowledge of the
future.  The Kotani-Last Conjecture can then be seen as an optimistic
quantitative generalization of this result: almost periodicity means that
approximate knowledge of $\{V(n)\}_{n \in \Z}$
(i.e., up to $\ell^\infty$-small error) can be obtained by sufficiently
precise knowledge of the sequence in a sufficiently long
(but finite) interval.

\begin{rem}

Regarding the essential support of the
singular part of the spectral measure, it is well known that
bounded eigenfunctions can form at most a one-dimensional subspace.
The existence of bounded potentials
having no non-trivial bounded eigenfunctions was first
established by Jitomirskaya in \cite {J2} (for certain explicit
ergodic potentials with singular continuous spectrum).

\end{rem}

\subsection{Principles of construction}

As discussed above, a key obstacle to the construction of unbounded
eigenfunctions in the absolutely continuous spectrum is the need to obey
(\ref {pi3}).  In fact there are other similar constraints that must be
satisfied, for instance,
$\frac {1} {N} \sum_{k=0}^{N-1} \|A(E,0,k)\|$ is bounded for almost
every $E$.\footnote {Indeed if $\uu(E,x,n)$ and $\overline
{\uu(E,x,n)}$ is a pair of complex conjugate eigenfunctions with Wronskian
$i$ then $\frac {1} {\pi N} \sum_{0 \leq k \leq N-1} |\uu(x,E,n)|^2$ is
the derivative of the integrated density of states.}

The need for unbounded eigenfunctions to oscillate shows that
if we introduce growth, we must cancel it at some later scale. 
This demands very careful matching of the transfer matrices: if $x,y \in
\SL(2,\R)$ then we have $\|y x\| \geq \|y\| \|x\| |\sin \omega|$ where
$\omega$ is the angle between the contracting
eigendirection of $y^* y$ and the expanding eigendirection of $x x^*$.  So
unless the eigendirections are closely aligned, if $x$ and $y$ are large
then $y x$ is even larger.  But as energy
changes, the eigendirections move, which can easily destroy a precise
match, resulting in growth for nearby eigenfunctions, which will result in
losses of the ac spectrum.  Notice that (\ref {pi3}) shows also the need
to spread the moments where growth occurs according to the energy,
but this creates further complications regarding the interaction
of the transfer matrices in those different scales.

Our approach to avoid uncontrolled growth
is based on slow deformation of periodic potentials, the
spectrum of which consists of bands.
In order to create growth in the first
place, we must introduce disorder which eats up part of the ac spectrum.  In
order to lose only an $\epsilon$-proportion of ac spectrum, we must
introduce (in our approach) so little disorder that the corresponding growth
is of order $\epsilon$ in the bulk of the bands (this is clearly not enough
to win, due to the need to spare the ac spectrum).  However, we can
produce slightly more growth near the edges.  The disorder is introduced by
slow deformation, and then we unwind it.  The importance of slowness in the
deformation procedure is that any introduced eigenfunction growth is also
unwinded.  We get back to a bounded setting which allows us to iterate the
estimates.  What we see in the end is that an eigenfunction will tend to pick
up oscillation at some time scales.  While those oscillations are not
absolutely summable,
the process is so slow that their sum would still remain bounded unless
there is some coherence of the phases.
However, at rare random (and it is here one sees the spreading in the
energy) time scales the oscillations do become coherent, so
the eigenfunction does become unbounded.

We must of course be very careful in our introducing of disorder at each
step.  Our chosen mechanism is dictated by the setting.  In the continuum,
it is possible to introduce tiny amounts of rotation (for the transfer
matrices), and we proceed by a
large variation on the axis of rotation.  This does not work in the discrete
case, so we must instead create a tiny disturbance on the axis of rotation. 
In order to do so, we work all the time
with one-parameter families of periodic potentials that remain coherent in a
large part of the spectrum.

In order to construct non almost periodic potentials which have ac spectrum,
we consider again perturbations of periodic potentials.  We construct two
distinct deformations which are largely coherent, but which have slightly
distinct periods (in the continuum case).
Iterating each independently for a long time, they will
slowly lose the coherence, until it has a definite magnitude.  Later on they
will become coherent again, and we can match both to construct a new
periodic potential with large ac spectrum.  Geometrically, the dynamics has
slightly different speeds at nearby orbits of the phase space,
creating macroscopic sliding in long time scales (think of the horocycle
flow), though at some later time scale everything becomes periodic.  Sliding
is naturally incompatible with almost periodicity.  Technically, the
discrete case is much more delicate, since we can not produce a tiny
difference of periods (as it must be an integer), so we use slow
deformation along a coherent family of periodic potentials to construct
coherent periodic potentials with discrepant periods.

{\bf Acknowledgements:} I would like to thank David Damanik for numerous
comments.  This work was supported by the ERC Starting Grant
``Quasiperiodic'' and by the Balzan project of Jacob Palis.

\comm{
\subsection{Outline of the remaining of the paper}

In \S 2, we discuss the slow deformation technique, which is common to both
the continuum and the discrete case.

We then focus on to the case of continuum potentials.
In \S 3, we recall some basic facts about periodic and
uniquely ergodic potentials.  \S 4 is the core of the construction of
unbounded eigenfunctions: it shows that by repeated slow deformation of a
periodic potential one can produce a new periodic potential (of much larger
period), whose eigenfunctions oscillate significantly.  In \S 5, this is
used to inductively construct a sequence of periodic potentials that
converge to a uniquely ergodic potential with unbounded eigenfunctions.
}

\comm{

In order to disprove the Kotani-Last Conjecture, we essentially have to
disturb almost periodicity without running into matrix growth that might
create heavy losses of ac spectrum due to (\ref {pi3}).

This is of course part of what we need to address in
disproving the Schr\"odinger Conjecture, and marked the first 

A key mechanism in our approach is to introduce disruptions by
slow deformation of periodicity, which basically neutralizes the tendency of
unaligned matrix products to get larger by ``averaging''.

What makes the ``ergodic'' Schr\"odinger Conjecture particularly attractive is
the validity of a

Why would one believe in the Schr\"odinger Conjecture in the case of ergodic
potentials (which do not decay)?

What about ergodic potentials?  It turns out that there

Consider the product By polar decomposition, we
may write $A_j=R_{\theta_j'} D_{\lambda_j} R_{\theta_j''}$.

Lusin's Conjecture, Kolmogorov, Carleson's Theorem, non-linear version. 
Steklov's Conjecture and the Schr\"odinger Conjecture.  Ergodic version,
Kotani's Parseval, Kotani interval in the spectrum.  Random walk in
hyperbolic space.

Kotani's determinism theorem, finite version.
}

\section{Continuum case: preliminaries}

We will make use of the usual $\SL(2,\R)$ action on $\overline \C$:
$\begin{pmatrix} a&b\\c&d \end{pmatrix} \cdot z=\frac {az+b} {cz+d}$.

Let $d$ be the hyperbolic distance in the upper half-plane $\H$.

Let $R_\theta=\begin{pmatrix} \cos 2 \pi \theta & -\sin 2 \pi \theta \\ \sin
2 \pi \theta & \cos 2 \pi \theta \end{pmatrix}$.

If $A \in \SL(2,\R)$ satisfies $|\tr A|<2$, there exists a unique fixed
point $\u(A)$ of $A$ in $\H$.  Moreover, $A$ is conjugated in
$\SL(2,\R)$ to a well defined rotation $R_{\Theta(A)}$,
where $\Theta(A) \in (0,\frac {1} {2}) \cup (\frac {1} {2},1)$.
The conjugacy matrix $B$, satisfying $B A
B^{-1}=R_{\Theta(A)}$ is not canonical (one may postcompose $B$
with rotations), but can be chosen to have the form
\be
\B(A)=\frac {1} {(\Im \u(A))^{1/2}}
\begin{pmatrix} 1 & -\Re \u(A) \\ 0 & \Im \u(A) \end {pmatrix}.
\ee
Notice that $\u$ and $\B$ are analytic functions of $A$.

Given a continuous function $V:\R \to \R$,
we define the transfer matrices $A[V](E,t,s) \in \SL(2,\R)$
so that $A[V](E,t,t)=\id$ and
\be
\frac {d} {ds} A[V](E,t,s)=\begin{pmatrix} 0 & -E-V(s) \\ 1 & 0 \end
{pmatrix} A[V](E,t,s).
\ee
An eigenfunction of the Schr\"odinger operator with potential $V$ and energy
$E$ is a solution $\uu:\R \to \R^2$ of $\uu(s)=A[V](E,t,s) \cdot \uu(t)$.

We have the following basic monotonicity property:

\begin{lemma} \label {bla7}

If $s>t$ and $|\tr A[V](E,t,s)|<2$ then
\be
\frac {d} {dE} \Theta(A[V](E,t,s))>0.
\ee

\end{lemma}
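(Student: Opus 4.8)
The plan is to differentiate the conjugation identity $\B(A)\,A\,\B(A)^{-1}=R_{\Theta(A)}$ along the curve $E\mapsto A[V](E,t,s)$, exploiting that the generator of rotations is centralized by every $R_\theta$, so that the non-canonical choice of $\B$ (and its $E$-derivative) drop out. First I would record how the transfer matrix moves in $E$. Fix $s>t$, write $A(r)=A[V](E,t,r)$ and $\Lambda(r)=\bigl(\begin{smallmatrix}0&-E-V(r)\\1&0\end{smallmatrix}\bigr)$, and note $\partial_E\Lambda=\bigl(\begin{smallmatrix}0&-1\\0&0\end{smallmatrix}\bigr)$ is constant in $r$. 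By the standard smooth dependence of linear ODEs on parameters, $A$ is differentiable in $E$; differentiating $\partial_s A=\Lambda A$ in $E$ and applying variation of parameters to $\partial_s(\partial_E A)=\Lambda\,(\partial_E A)+(\partial_E\Lambda)\,A$ with $\partial_E A\big|_{s=t}=0$ gives $\partial_E A(s)=A(s)\,M$, where $M=\int_t^s A(r)^{-1}(\partial_E\Lambda)\,A(r)\,dr\in\mathfrak{sl}(2,\R)$.

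The structural observation is that $\partial_E\Lambda$ lies in $\overline C$, where $C\subset\mathfrak{sl}(2,\R)$ is the open cone of generators of positively oriented elliptic one-parameter subgroups: with $J=\bigl(\begin{smallmatrix}0&-1\\1&0\end{smallmatrix}\bigr)$ (so $R_\theta=\exp(2\pi\theta J)$), $C$ is the component of $\{N\in\mathfrak{sl}(2,\R):\det N>0\}$ containing $J$, equivalently $C=\{N:\det N>0,\ N_{21}-N_{12}>0\}$. Since $\det$ restricted to $\mathfrak{sl}(2,\R)$ is a nondegenerate quadratic form of Lorentzian signature, $\overline C$ is a closed, convex, pointed cone; and since $\mathrm{Ad}(g)$ preserves $\det$ and $\SL(2,\R)$ is connected, $\mathrm{Ad}(\SL(2,\R))$ preserves $\overline C$. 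Hence each integrand $A(r)^{-1}(\partial_E\Lambda)A(r)$ lies in $\overline C$; the integrand at $r=t$ equals $\partial_E\Lambda=\bigl(\begin{smallmatrix}0&-1\\0&0\end{smallmatrix}\bigr)$, a nonzero element of $\overline C$ ($\det=0$, $N_{21}-N_{12}=1>0$); so, using $s>t$ and pointedness of $\overline C$, $M\in\overline C\setminus\{0\}$.

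Next I would differentiate $\B(E)\,A(E)\,\B(E)^{-1}=R_{\Theta(E)}$ in $E$, where $A(E)=A[V](E,t,s)$, $\B(E)=\B(A(E))$, $\Theta(E)=\Theta(A(E))$ (legitimate since $\u,\B$ are analytic in $A$, and $A(E)$ in $E$). Writing $Y=(\partial_E\B)\B^{-1}$ and $N'=\B M\B^{-1}=\mathrm{Ad}(\B)M\in\overline C\setminus\{0\}$, substituting $\partial_E A=AM$ and $\tfrac{d}{d\theta}R_\theta=2\pi J R_\theta$, the derivative of the identity becomes $2\pi\,\Theta'(E)\,J R_\Theta=[Y,R_\Theta]+R_\Theta N'$. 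Multiplying on the right by $R_\Theta^{-1}$ and applying the functional $X\mapsto\tr(JX)$: the commutator contributes $\tr(JY)-\tr(R_\Theta^{-1}J R_\Theta\,Y)=0$ because $J$ commutes with $R_\Theta$, the last term contributes $\tr(R_\Theta^{-1}J R_\Theta\,N')=\tr(JN')$, and the left side is $2\pi\Theta'(E)\,\tr(J^2)=-4\pi\,\Theta'(E)$. Therefore $\Theta'(E)=-\tfrac{1}{4\pi}\tr(JN')$. Finally $\tr(JN)=N_{12}-N_{21}=-(N_{21}-N_{12})$ is strictly negative on $\overline C\setminus\{0\}$ (it is negative on $C$; equality on $\overline C$ forces $N_{21}=N_{12}$ and $\det N\ge 0$, hence $N=0$). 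Since $N'\in\overline C\setminus\{0\}$, we conclude $\Theta'(E)>0$, as asserted.

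The main obstacle is the sign bookkeeping: $\Theta$ takes values in the disconnected set $(0,\tfrac12)\cup(\tfrac12,1)$, so the naive relation $2\cos 2\pi\Theta=\tr A$ leaves an ambiguous branch and cannot settle the sign on its own. The conjugation identity, together with the fact that $J$ is fixed by conjugation by every $R_\theta$, removes both this ambiguity and the dependence on the non-canonical $\B$; the only remaining input is the elementary convex-geometric fact that $\partial_E\Lambda\in\overline C$, with $\overline C$ the $\mathrm{Ad}$-invariant cone above. (Alternatively, one could run the classical Pr\"ufer computation: the phase $\psi(r)$ of a solution satisfies $\psi'=\cos^2\psi+(E+V)\sin^2\psi$, whose $E$-derivative solves a linear equation with nonnegative, not identically zero, forcing, so $\partial_E\psi(s)>0$; but then one must still relate $\psi(s)$ to $\Theta$, which is exactly the bookkeeping the argument above finesses.)
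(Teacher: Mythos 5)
Your argument is correct. Note first that the paper offers no proof of Lemma \ref{bla7} at all: it is stated as a ``basic monotonicity property'', i.e.\ the classical fact that the rotation number (Pr\"ufer phase) of a Schr\"odinger cocycle is monotone in the energy, so there is no in-paper argument to compare against. Your derivation is a sound self-contained substitute. The variation-of-parameters identity $\partial_E A(s)=A(s)M$ with $M=\int_t^s A(r)^{-1}(\partial_E\Lambda)A(r)\,dr$ is right; the cone $C=\{N\in\mathfrak{sl}(2,\R):\det N>0,\ N_{21}-N_{12}>0\}$ is indeed the convex (forward light cone) component containing $J$, is $\mathrm{Ad}(\SL(2,\R))$-invariant by connectedness, and $\partial_E\Lambda\in\overline C\setminus\{0\}$, so $M\in\overline C\setminus\{0\}$ (the clean way to see $M\neq0$, which you essentially use at the end anyway, is that $\ell(N)=N_{21}-N_{12}$ is nonnegative on $\overline C$, vanishes there only at $0$, and $\ell$ of the integrand is positive near $r=t$). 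Differentiating $\B A\B^{-1}=R_\Theta$ (legitimate, since the paper's explicit $\B(A)$ lies in $\SL(2,\R)$ and is analytic in $A$) and testing against $\tr(J\,\cdot)$ kills the non-canonical term exactly as you say, giving $\Theta'(E)=\tfrac{1}{4\pi}\bigl(N'_{21}-N'_{12}\bigr)>0$; the sign is consistent with the free case $V=0$, $E>0$, where $\Theta=(s-t)E^{1/2}/2\pi \bmod 1$. Your closing remark is also apt: the more traditional route is the Pr\"ufer equation $\psi'=\cos^2\psi+(E+V)\sin^2\psi$ together with the relation between $\psi(s)-\psi(t)$ and $\Theta$ modulo $1$ on the elliptic locus, which is presumably the proof the author had in mind when omitting it; your $\mathrm{Ad}$-invariant cone formulation packages the same positivity while sidestepping the branch bookkeeping for $\Theta\in(0,\tfrac12)\cup(\tfrac12,1)$.
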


\subsection{Periodic case}

Assume now that $V$ is periodic of period $T$.  In this case we write
$A[V](E,t)=A[V](E,t,t+T)$ and $A[V](E)=A[V](E,0)$.  Note that
$\tr A[V](E,t)=\tr A[V](E)$ for all $t \in \R$.

The spectrum $\Sigma=\Sigma(V)$
of the Schr\"odinger operator with potential $V$ is the set of all $E$ with
$|\tr A[V](E)| \leq 2$.  Let also
$\Omega=\Omega(V)$ be the set of all $E$ with $|\tr A[V](E)|<2$.  We note
that $\Sigma \setminus \Omega=\partial \Omega$ consists of isolated points.

For $E \in \Omega$, let $u[V](E,t)=\u(A[V](E,t))$ and
$u[V](E)=u[V](E,0)$.
The integrated density of states (i.d.s.) $N$
is absolutely continuous in this case.  It satisfies
\be
\frac {d} {dE} N(E)=
\frac {1} {2 \pi T} \int_0^T \frac {1} {\Im u[V](E,t)} dt,
\ee
for each $E \in \Omega$.

In the following results, we assume $V$ to be fixed and write $A(\cdot)$ for
$A[V](\cdot)$ and $u(\cdot)$ for $u[V](\cdot)$.

\begin{lemma} \label {bla3003}

For almost every $E \in \Sigma$, for every $\epsilon>0$,
there exists $N \in \N$ with the following property.
Let $\tilde u(E,t) \in \H$ be some (not necessarily periodic)
solution of $A(E,t,s) \cdot \tilde
u(E,t)=\tilde u(E,s)$.  Then
\be
\frac {1} {N T} \int_0^{N T} \frac {1} {\Im \tilde u(E,t)} dt>
\frac {1} {T} \int_0^T \frac {1} {\Im u(E,t)} dt-\epsilon.
\ee

\end{lemma}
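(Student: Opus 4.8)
The plan is to understand the integral $\frac{1}{T}\int_0^T \frac{1}{\Im \tilde u(E,t)}\,dt$ as a time average of the function $t\mapsto \frac{1}{\Im z(t)}$ along the flow of the $\SL(2,\R)$-cocycle on the upper half-plane $\H$, and to compare the orbit of an arbitrary point $\tilde u(E,0)$ with the orbit of the distinguished periodic point $u(E,0)$. For fixed $E\in\Omega$, the return map $A(E)$ is conjugate (via $\B(A(E))$) to a rotation $R_{\Theta(E)}$ fixing $u(E)$. Since the trajectory of $\tilde u(E,t)$ under the flow is just the $A(E,0,t)$-image of $\tilde u(E,0)$, and $A(E,0,t)$ depends periodically (in $t$) on $t\bmod T$, everything is governed by the dynamics of the elliptic Möbius transformation $A(E)$ on $\H$: the periodic point $u(E)$ is its center, and $\tilde u(E,0)$ lies on some invariant circle (in the hyperbolic metric, a circle centered at $u(E)$) on which $A(E)$ acts as an irrational (for a.e.\ $E$, by the monotonicity Lemma \ref{bla7}, $\Theta(E)$ is irrational) rotation.

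The key computation is therefore: for a point $w\in\H$ at hyperbolic distance $r>0$ from $u(E)$, and $g$ an elliptic element with center $u(E)$ acting by angle $2\pi\Theta$, compare $\frac{1}{\Im(g^k w)}$ averaged over $k=0,\dots,N-1$ with $\frac{1}{\Im u(E)}$. Pushing everything to the model where $u(E)=i$ (via $\B(A(E))$, which also conjugates $\Im^{-1}$ appropriately — here one must track how $\B$ distorts the quantity $\Im^{-1}$, and this is the only slightly fiddly bookkeeping), a point at distance $r$ from $i$ traces out, as the rotation angle varies, a hyperbolic circle; the function $1/\Im$ on that circle has average (over the rotation parameter) strictly larger than its value $1$ at the center, by strict convexity of $1/\Im$ as a function along the circle (or equivalently by Jensen, since $1/\Im z$ restricted to a hyperbolic circle centered at $i$ is a non-constant function whose average exceeds the center value — explicitly the average of $\frac{1}{\Im}$ over the circle of hyperbolic radius $r$ about $i$ equals $\cosh r > 1$). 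If $\Theta(E)$ is irrational, by Weyl equidistribution the time average $\frac{1}{NT}\int_0^{NT}\frac{1}{\Im \tilde u}\,dt$ converges, as $N\to\infty$, to the spatial average over the corresponding invariant curve, and one also uses the known formula $\frac{1}{T}\int_0^T\frac{1}{\Im u(E,t)}\,dt = 2\pi T \frac{dN}{dE}$ for the periodic solution together with the fact that the whole continuous-time orbit segment $\{A(E,0,t)w : 0\le t\le T\}$ fibers over the discrete orbit. Thus for every $\tilde u(E,0)\ne u(E,0)$ the limit of the left-hand side strictly exceeds the right-hand side, so for $N$ large enough the inequality holds; a compactness/uniformity argument over the (one-parameter family of) choices of $\tilde u(E,0)$ — or simply noting that the gap only shrinks to zero as $\tilde u(E,0)\to u(E,0)$, where it becomes irrelevant — lets one pick a single $N=N(E,\epsilon)$ working for all $\tilde u$ simultaneously.

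The main obstacle I expect is the \emph{uniformity in $\tilde u(E,t)$}: the statement asks for one $N$ that works for \emph{every} (non-periodic) solution, whereas the naive equidistribution argument gives, for each individual $\tilde u$, an $N$ depending on it (and the rate of equidistribution of $\{k\Theta(E)\}$ degrades with Diophantine properties of $\Theta(E)$, which is why one only gets the conclusion for a.e.\ $E$). The resolution is that the deficit $\frac{1}{NT}\int_0^{NT}\frac{1}{\Im\tilde u}\,dt - \left(\frac{1}{T}\int_0^T\frac{1}{\Im u}\,dt-\epsilon\right)$ is, after the change of variables, controlled uniformly by the hyperbolic radius $r = d(\tilde u(E,0),u(E))$: for $r$ bounded away from $0$ the circle average beats the center value by a definite amount (roughly $\cosh r - 1$), so once $N$ is large enough that the Birkhoff sums are within, say, $\epsilon/2$ of their averages uniformly over the compact annulus $\{r_0 \le r \le R\}$ — possible by standard uniform equidistribution on a single circle, extended across a compact range of $r$ — we are done on that annulus; for $r < r_0$ small the circle average still exceeds the center value, so it suffices that $\cosh r_0 - 1 \ge \epsilon$ is not even needed — rather, for small $r$ the left side already exceeds the right side minus $\epsilon$ for \emph{all} $N$ since the left side is $\ge \frac{1}{T}\int_0^T \frac{1}{\Im u}\,dt$ up to $O(r^2)$ and we subtracted a full $\epsilon$; for large $r$, $1/\Im$ is large and again the bound is easy. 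Assembling these three regimes gives a single $N$, completing the proof.
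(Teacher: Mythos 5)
Your proposal follows essentially the same route as the paper's proof: restrict to the full-measure set of $E\in\Omega$ with $\Theta(A(E))$ irrational, note that $\tilde u(E,t+jT)$ equidistributes on the hyperbolic circle of radius $r=d(\tilde u(E,t),u(E,t))$ centered at $u(E,t)$, use that the average of $1/\Im$ over that circle dominates its value at the center (your exact computation, average $=\cosh r\,/\,\Im u(E,t)$ after conjugating by the affine matrix $\B$, is a sharpened form of the paper's symmetric-point inequality $\tfrac12(1/\Im \tilde z+1/\Im \tilde z')\ge 1/\Im z$), get uniformity in $\tilde u$ by splitting according to the radius, and finally integrate over $t\in[0,T]$. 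The small-radius and bounded-radius regimes are handled correctly (Lipschitz continuity of $\ln\Im$ for small $r$; uniform equidistribution over a compact range of radii, which is exactly the paper's ``uniform provided $d(\tilde u,u)$ is bounded'').

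The one step stated inaccurately is the large-radius regime: it is \emph{not} true that $1/\Im\tilde u$ is pointwise large on a large circle — near the ``top'' of the circle one has $\Im\tilde u\approx e^{r}\Im u$, so $1/\Im\tilde u$ is exponentially small, and a priori the solution could start there. What makes this regime easy, and is precisely the paper's observation, is that consecutive points differ by the fixed rotation angle, with $2\pi\Theta(A(E))$ bounded away from $0$ modulo $\pi$ since $\Theta(A(E))$ is irrational; hence of any two consecutive points $\tilde u(E,t+jT)$, $\tilde u(E,t+(j+1)T)$ at least one lies at definite angular distance from the top and so has $1/\Im$ of order $e^{r}/\Im u(E,t)$. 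Consequently, for $r$ larger than a threshold depending only on $E$ (uniformly in $t\in[0,T]$), the $N$-point average already exceeds $1/\Im u(E,t)$ for every $N\ge 2$ and every starting point. With this small repair your three-regime assembly closes, and the argument coincides with the paper's.
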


\begin{proof}

Let $E \in \Omega$.  If $d(\tilde u(E,t),u(E,t))$
is large, then at least one of $\frac {1} {\Im \tilde u(E,t)}$ and
$\frac {1} {\Im \tilde u(E,t+T)}$ has to be large, so for
$N \geq 2$ we have
\be
\frac {1} {N} \sum_{j=0}^{N-1} \frac {1} {\tilde u(E,t+j T)} \geq \frac {1}
{\Im u(E,t)}.
\ee

Assume further that $\Theta(A(E))$ is irrational.
Then for any $0 \leq t \leq T$, as $N$ grows
the sequence $\tilde u(E,t+j T)$, $0 \leq j \leq N-1$, is getting
equidistributed in the circle of (hyperbolic radius)
$d(\tilde u(E,t),u(E,t))$ around
$u(E,t)$.  One directly computes that if $\tilde z, \tilde z' \in \H$
are symmetric points with respect to some $z \in \H$ then $\frac {1} {2}
(\frac {1} {\Im \tilde z}+\frac {1} {\Im \tilde z'})
\geq \frac {1} {\Im z}$.  It follows
that for $N$ large
\be
\frac {1} {N} \sum_{j=0}^{N-1} \frac {1} {\tilde u(E,t+j T)} \geq \frac {1}
{\Im u(E,t)}-\epsilon.
\ee
Since this estimate is uniform on the solution $\tilde u(E,t)$ provided
$d(\tilde u(E,t),u(E,t))$ is bounded, the result follows.
\end{proof}

\begin{lemma} \label {bla3}

For almost every $E \in \Sigma$, for every $t_0 \in \R$ we have
\be
\inf_{w \in \R^2,\, \|w\|=1}
\sup_{t>t_0} \|A(E,t_0,t) \cdot w\|=\sup_t
e^{(d(u(E,t),i)-d(u(E,t_0),i))/2}.
\ee

\end{lemma}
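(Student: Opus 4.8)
The plan is to conjugate the period-$T$ cocycle to a constant elliptic rotation, so that the growth question reduces to an equidistribution computation on a circle and both the $\sup$ over $t$ and the $\inf$ over $w$ become explicit. Throughout, fix $E\in\Omega$ for which $\Theta(A(E))$ is irrational; this holds for almost every $E\in\Sigma$, since $\Sigma\setminus\Omega=\partial\Omega$ is countable and, by Lemma~\ref{bla7} applied to the monodromy matrix $A(E)=A[V](E,0,T)$, the function $E\mapsto\Theta(A(E))$ is strictly monotone on each band of $\Omega$ and so takes each rational value at most once per band. Put $M_t=A(E,0,t)$, so that $A(E,t_0,t)=M_tM_{t_0}^{-1}$. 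Expanding $A(E,0,s+T)$ by the cocycle identity, split once at time $T$ and once at time $s$, gives the two relations $M_{s+T}=M_sA(E)$ and $A(E,s)=M_sA(E)M_s^{-1}$; in particular $u(E,s)=M_s\cdot u(E)$.

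I would first record the dictionary between the geometry of $\H$ and matrix norms. A direct computation gives $\B(A)\cdot\u(A)=i$, so $\B(A)^{-1}$ takes $i$ to $\u(A)$; combined with the standard facts $d(B\cdot i,i)=2\log\|B\|$ and $\|B^{-1}\|=\|B\|$ for $B\in\SL(2,\R)$, this yields $\|\B(A(E,t))\|=e^{d(u(E,t),i)/2}$. Hence the right-hand side of the statement equals $\bigl(\sup_t\|\B(A(E,t))\|\bigr)\,\|\B(A(E,t_0))\|^{-1}$, and it remains to show the left-hand side equals the same quantity.

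Next I would put $A(E,t_0,t)$ in normal form. Writing $t-t_0=nT+r$ with $n\geq0$ and $r\in[0,T)$, the identities above give $A(E,t_0,t)=M_{t_0+r}A(E)^nM_{t_0}^{-1}$, and diagonalizing $A(E)=\B(A(E))^{-1}R_\Theta\B(A(E))$ turns this into $\bigl(M_{t_0+r}\B(A(E))^{-1}\bigr)R_{n\Theta}\bigl(\B(A(E))M_{t_0}^{-1}\bigr)$. Since $s\mapsto\tr A(E,s)$ is constant, the continuous function $s\mapsto\Theta(A(E,s))$ can only take the two values $\Theta$ and $1-\Theta$ and is therefore identically $\Theta$, so $A(E,s)=\B(A(E,s))^{-1}R_\Theta\B(A(E,s))$ as well. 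Comparing this with $A(E,s)=\bigl(M_s\B(A(E))^{-1}\bigr)R_\Theta\bigl(M_s\B(A(E))^{-1}\bigr)^{-1}$ and using that the centralizer of $R_\Theta$ in $\SL(2,\R)$ is $\SO(2)$ (here $\Theta\notin\{0,\tfrac12\}$) shows that $M_s\B(A(E))^{-1}=\B(A(E,s))^{-1}R_{\phi_s}$ for some angle $\phi_s$, and likewise $\B(A(E))M_{t_0}^{-1}=R_\psi\B(A(E,t_0))$. Substituting,
\[
A(E,t_0,t)=\B(A(E,t_0+r))^{-1}\,R_{\phi_{t_0+r}+n\Theta+\psi}\,\B(A(E,t_0)).
\]

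Finally I would carry out the optimization. For fixed unit $w$ and fixed $r$, irrationality of $\Theta$ makes $\{\phi_{t_0+r}+n\Theta+\psi \bmod 1\}_{n\geq0}$ dense, so $R_{\phi_{t_0+r}+n\Theta+\psi}\B(A(E,t_0))w$ fills the circle of radius $\|\B(A(E,t_0))w\|$; hence $\sup_{n\geq0}\|A(E,t_0,t)\cdot w\|=\|\B(A(E,t_0+r))^{-1}\|\,\|\B(A(E,t_0))w\|$. Taking the supremum over $r\in[0,T)$ and using periodicity of $t\mapsto u(E,t)$, we obtain $\sup_{t>t_0}\|A(E,t_0,t)\cdot w\|=\bigl(\sup_t\|\B(A(E,t))\|\bigr)\|\B(A(E,t_0))w\|$, a fixed multiple of $\|\B(A(E,t_0))w\|$. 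Its infimum over unit vectors $w$ is that multiple times the least singular value of $\B(A(E,t_0))$, namely $\|\B(A(E,t_0))\|^{-1}$, and by the first step this is exactly the right-hand side. The only genuinely delicate point is the normal-form step: obtaining honest rotations for the two conjugating matrices requires both the constancy of $\Theta(A(E,s))$ in $s$ and the centralizer description of $R_\Theta$. Everything after that — the equidistribution and the singular-value identity — is routine.
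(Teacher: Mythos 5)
Your proof is correct, and it follows exactly the route the paper indicates: its entire proof of Lemma \ref{bla3} is the single instruction ``Take $E \in \Omega$ such that $\Theta(A(E))$ is irrational'', leaving the reader to supply precisely the reduction you carry out (conjugating the cocycle to the constant rotation $R_{\Theta}$ via $\B$, using density of the irrational rotation orbit, and translating $d(u(E,t),i)$ into $2\ln\|\B(A(E,t))\|$). Your write-up is a faithful and complete filling-in of that sketch, including the full-measure argument via Lemma \ref{bla7} and the centralizer step that makes the conjugating factors honest rotations.
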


\begin{proof}

Take $E \in \Omega$ such that $\Theta(A(E))$ is irrational.
\end{proof}

\subsection{Uniquely ergodic case}


Let $\{F_t:X \to X\}_{t \in \R}$, be a continuous flow which is
minimal and uniquely ergodic with invariant probability
measure $\mmu$, and let $v:X \to \R$ be a
continuous function.  Let $A[F,v](E,x,t,s)=A[V](E,t,s)$ with
$V(t)=v(F_t(x))$
Let $\Sigma=\Sigma(F,v)$
be the corresponding spectrum (which is $x$-independent by minimality).
Notice that if $v$ is non-negative and non-identically vanishing,
then $\Sigma \subset (0,\infty)$.

Let $N(E)$ be the i.d.s., and $L(E)$ be the Lyapunov exponent, defined by
\be
L(E)=\lim_{T \to \infty}
\frac {1} {T} \int \ln \|A[F,v](E,x,0,T)\| d\mmu(x).
\ee
For a uniquely ergodic flow, the i.d.s. is not
necessarily absolutely continuous.  However, we have the following result
(due to Kotani, see \cite {D}). 
Let $\Sigma_0 \subset \Sigma$ be the set of $E$ with $L(E)=0$.

\begin{lemma} \label {bla3031}

We have $\frac {d} {dE} N(E)>0$ for almost every $E \in \Sigma_0$.
Moreover, there exists a measurable function
$u[F,v]:\Sigma_0 \times X \to \H$, unique up to
$\Leb \times \mmu$-zero
measure sets, such that $A[F,v](E,x,0,t) \cdot
u[F,v](E,x)=u[F,v](E,F_t(x))$.  This function satisfies
\be
\frac {d} {dE} N(E)=\frac {1} {2 \pi}
\int \frac {1} {\Im u[F,v](E,x)} d\mmu(x).
\ee

\end{lemma}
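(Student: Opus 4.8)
The plan is to recognize the statement as a repackaging, for $\SL(2,\R)$ cocycles, of Kotani's theory of the absolutely continuous spectrum, and to assemble it from three classical ingredients: the Weyl $m$-functions and their elementary properties; Kotani's reflexivity theorem on the set $L=0$; and the resolvent-kernel formula for the density of states. For $\Im E>0$, let $m_+[F,v](E,x)\in\H$ denote the $\H$-valued solution of the Riccati equation associated to the cocycle (that is, the unique $\H$-valued $\tilde u$ with $A[F,v](E,x,0,t)\cdot\tilde u(E,x)=\tilde u(E,F_t(x))$) whose associated $\R^2$-solution is square-integrable at $+\infty$, equivalently the Weyl limit point $\lim_{s\to+\infty}A[F,v](E,x,0,s)^{-1}\cdot z$, which exists and is independent of $z\in\H$; let $m_-[F,v](E,x)$ be the analogous object at $-\infty$, which lies in the lower half-plane. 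Directly from this description, $m_+[F,v]$ is equivariant as required, is measurable, and is analytic in $E$ for $\Im E>0$. By minimality $\Sigma$ is $x$-independent, and by the Ishii--Pastur--Kotani theorem $\Sigma_0=\{E:L(E)=0\}$ coincides, up to a Lebesgue-null set, with the essential support of the a.c.\ spectrum. Kotani's theorem then furnishes, for a.e.\ $E\in\Sigma_0$ and a.e.\ $x$, the nontangential boundary value $m_\pm[F,v](E+i0,x)$: it lies in $\H$ (resp.\ its complex conjugate), depends measurably on $(E,x)$, stays equivariant (the equivariance passes to the limit because $A[F,v](E,x,0,t)$ is jointly continuous), and obeys the reflexivity relation $m_-[F,v](E+i0,x)=\overline{m_+[F,v](E+i0,x)}$. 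Set $u[F,v]:=m_+[F,v](\,\cdot+i0,\cdot\,)$ on $\Sigma_0\times X$; this gives the asserted existence, equivariance, and measurability.

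For the density of states, recall that the diagonal of the resolvent kernel (Green's function) $G(\zeta,x;0,0)$ of the operator with potential $t\mapsto v(F_t(x))$ is built from the square-integrable solutions at $\pm\infty$, and with the present normalization of the transfer matrices this gives $G(\zeta,x;0,0)=\bigl(m_-[F,v](\zeta,x)-m_+[F,v](\zeta,x)\bigr)^{-1}$ for $\Im\zeta>0$. The density of states is the $\mmu$-average of the local spectral measure, $\frac{d}{dE}N(E)=\frac1\pi\,\Im\!\int G(E+i0,x;0,0)\,d\mmu(x)$, and on $\Sigma_0$ the local spectral measures are, by Kotani's theorem, purely absolutely continuous, so there this average is exactly the a.c.\ density of $N$. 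Taking boundary values on $\Sigma_0$ and substituting $m_-=\overline{m_+}$ yields $\Im G(E+i0,x;0,0)=\bigl(2\,\Im m_+[F,v](E+i0,x)\bigr)^{-1}$, hence $\frac{d}{dE}N(E)=\frac1{2\pi}\int\bigl(\Im u[F,v](E,x)\bigr)^{-1}d\mmu(x)$; this is the exact ergodic analogue of the periodic formula $\frac{d}{dE}N(E)=\frac1{2\pi T}\int_0^T(\Im u[V](E,t))^{-1}\,dt$, with the period average replaced by integration against $\mmu$ (a consistency check one should carry out). Since Kotani's theorem forces $\Im u[F,v](E,x)\in(0,\infty)$ for a.e.\ $(E,x)\in\Sigma_0\times X$, the integrand is strictly positive, and therefore $\frac{d}{dE}N(E)>0$ for a.e.\ $E\in\Sigma_0$.

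It remains to prove uniqueness, and this is where I expect the real work. If $\tilde u:\Sigma_0\times X\to\H$ is another measurable equivariant section, then for a.e.\ fixed $E$ the function $x\mapsto d\bigl(\tilde u(E,x),u[F,v](E,x)\bigr)$ is invariant under every $F_t$, because $A[F,v](E,x,0,t)$ acts on $(\H,d)$ by a hyperbolic isometry; by ergodicity it is $\mmu$-a.e.\ a constant $c(E)\ge 0$, and it suffices to show $c(E)=0$ for a.e.\ $E\in\Sigma_0$. If instead $c(E)>0$ on a positive-measure set of energies, then for each such $E$ the oriented geodesic through $u[F,v](E,x)$ and $\tilde u(E,x)$ is equivariant, so its two endpoints in $\partial\H=\mathbb P^1(\R)$ give two distinct invariant measurable line fields for the cocycle $A[F,v](E,\cdot)$; conjugating so as to send these endpoints to $0$ and $\infty$ exhibits $A[F,v](E,\cdot)$ as measurably conjugate to a diagonal cocycle $\mathrm{diag}\bigl(a(E,x,t),a(E,x,t)^{-1}\bigr)$ with $a\not\equiv 1$. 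This must be ruled out for a.e.\ $E\in\Sigma_0$, and that is the main obstacle: it is exactly the place where one must use that on $\Sigma_0$ the spectrum is \emph{genuinely} absolutely continuous --- not merely that $L(E)=0$ --- which is Kotani's statement that $A[F,v](E,\cdot)$ is $L^2$-conjugate to an $\SO(2,\R)$-valued cocycle (via $\B$ applied to $m_+$), and is in particular not measurably reducible to a nontrivial diagonal cocycle. Concretely, such a reduction would force the fibered rotation number of $A[F,v](E,\cdot)$ --- an affine function of $N(E)$ --- to lie in the countable set of phases coming from the point spectrum of $F_t$, which is impossible for a.e.\ $E\in\Sigma_0$ since, by the formula just proved, $N$ has positive derivative there and hence $N^{-1}(C)\cap\Sigma_0$ is Lebesgue-null for every countable $C$. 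Granting this, $c(E)=0$ for a.e.\ $E$, so $\tilde u=u[F,v]$ off a $\Leb\times\mmu$-null set, which completes the proof.
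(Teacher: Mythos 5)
The paper itself gives no proof of this lemma: it is stated as a quotation of Kotani's theory (via [D], [K]). Your first two paragraphs are a faithful reconstruction of exactly the material that citation refers to — the equivariant section is the nontangential boundary value of the Weyl function $m_+$, the reflectionless identity $m_-=\overline{m_+}$ for a.e.\ $E\in\Sigma_0$ gives $\Im G(E+i0,x;0,0)=\bigl(2\Im m_+(E+i0,x)\bigr)^{-1}$, and the relation between the averaged diagonal Green's function and the density of states (equivalently $w'=-\int G\,d\mmu$ for the Floquet exponent $w=L-i\pi N$) yields the displayed formula and, since $\Im m_+>0$ a.e.\ on $\Sigma_0$, the positivity of $\tfrac{dN}{dE}$. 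Up to sign conventions for the transfer matrices this part is correct and is essentially what the paper is importing.

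The uniqueness argument, however, has a genuine gap at its decisive step: the claim that a measurable reduction of $A[F,v](E,\cdot)$ to a diagonal cocycle forces the fibered rotation number $2\rho(E)=2\pi N(E)$ into the countable set of phases of the point spectrum of $F_t$. This is asserted, not proved, and it is not a standard fact for merely measurable conjugacies over a general uniquely ergodic flow. What a measurable invariant direction field $s_E$ actually yields is that the circle-valued cocycle $e^{2i\theta_E(x,t)}$ (the doubled swept-angle cocycle along $s_E$) is a measurable coboundary, $e^{2i\theta_E(x,t)}=\psi_E(F_tx)\,\overline{\psi_E(x)}$. Writing $\theta_E(x,t)=\rho(E)t+D_E(x,t)$ with $D_E$ of zero mean, this produces an eigenfunction of the flow with eigenvalue $2\rho(E)$ only if $e^{2iD_E}$ is itself cohomologous to $1$ — which is precisely what is not known; a transfer function that is only measurable has no degree, so its winding along orbits can absorb an arbitrary rotation number. (Over circle rotations such statements can be rescued for cocycles of bounded variation via Denjoy--Koksma, but here $F_t$ is an arbitrary uniquely ergodic flow and the lemma must hold for all of them.) Relatedly, your parenthetical ``$L^2$-conjugate to $\SO(2,\R)$, hence not measurably reducible to a nontrivial diagonal cocycle'' is a non sequitur: conjugacy to a rotation-valued cocycle does not by itself exclude a further measurable invariant section — that is exactly the ergodicity of the associated circle extension, which is what you would have to establish for a.e.\ $E\in\Sigma_0$ (and the claim that the diagonal entry $a\not\equiv1$ is likewise unjustified, though inessential). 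So as written the uniqueness half of the lemma is not proved; either it should be carried along as part of the Kotani-theory citation, as the paper does, or it requires a genuine argument, e.g.\ one exploiting the integrability $\int\bigl(\Im m_+ +(\Im m_+)^{-1}\bigr)d\mmu<\infty$ and the dependence on $E$, to exclude a positive-measure set of energies admitting a second invariant section.
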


Notice that when $F$ is $T$-periodic, we have
$u[F,v](E,x)=u[V](E)$ for $V=v(F_t(x))$.

The following is due to Kotani, see \cite {D}.

\begin{thm} \label {3041}

Let $F_t:X \to X$ be a uniquely ergodic flow, and let $v:X \to \R$ be
continuous.
If the Lyapunov exponent vanishes in the spectrum and the ids is absolutely
continuous, then the spectral measures are absolutely continuous for almost
every $x \in X$.

\end{thm}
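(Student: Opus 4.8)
The plan is to use the two available descriptions of the integrated density of states --- the formula of Lemma \ref{bla3031}, which describes only its absolutely continuous part, and the general disintegration $dN=\int\rho_x\,d\mmu(x)$ over spectral measures --- and to show that under the hypotheses they coincide completely, so that no room is left for singular spectrum.

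First I would record that $L\equiv 0$ on $\Sigma$ means $\Sigma_0=\Sigma$, and that, $N$ being absolutely continuous, $dN=n(E)\,dE$ with $n(E)=\frac{1}{2\pi}\int\frac{1}{\Im u[F,v](E,x)}\,d\mmu(x)>0$ for a.e.\ $E\in\Sigma$ by Lemma \ref{bla3031}; in particular $|\Sigma|>0$, as otherwise $N$ would be constant. Next I would invoke Kotani's theorem \cite{K}: since $L$ vanishes on the positive measure set $\Sigma_0=\Sigma$, for a.e.\ $x$ the operator is reflectionless on $\Sigma$, the boundary values $m_\pm(E+i0,x)$ of the half-line Weyl functions exist with $0<\Im m_\pm<\infty$ and $m_-(E+i0,x)=-\overline{m_+(E+i0,x)}$ for a.e.\ $(E,x)$ with $E\in\Sigma$; by the uniqueness clause of Lemma \ref{bla3031}, $m_+(E+i0,\cdot)$ agrees $\Leb\times\mmu$-a.e.\ with $u[F,v](E,\cdot)$, and consequently the diagonal Green's function at the reference point is purely imaginary with imaginary part $\tfrac{1}{2\,\Im u[F,v](E,x)}$. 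Let $\rho_x$ be the spectral measure of $H_x$ at the reference point, so that (standard density of states formula) $dN=\int\rho_x\,d\mmu(x)$ and, by the preceding, the absolutely continuous part of $\rho_x$ has density $\tfrac{1}{2\pi\,\Im u[F,v](E,x)}$ on $\Sigma$ for a.e.\ $x$.

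Now I would fix a bounded interval $I$ (localization in energy is necessary since $dN$ is an infinite measure in the continuum) and compute $dN(I)$ twice. Since $\rho_x$ is supported on the common spectrum $\Sigma$, on the one hand $dN(I)=\int\rho_x(I)\,d\mmu(x)$. On the other hand, by Lemma \ref{bla3031}, the density identity above, and Tonelli's theorem,
\[
dN(I)=\int_{\Sigma\cap I}n(E)\,dE=\int_X\!\int_{\Sigma\cap I}\frac{1}{2\pi\,\Im u[F,v](E,x)}\,dE\,d\mmu(x)=\int\rho_x^{\mathrm{ac}}(I)\,d\mmu(x).
\]
Subtracting, $\int\rho_x^{\mathrm{sing}}(I)\,d\mmu(x)=\int\bigl(\rho_x(I)-\rho_x^{\mathrm{ac}}(I)\bigr)\,d\mmu(x)=0$, and since the integrand is nonnegative, $\rho_x^{\mathrm{sing}}(I)=0$ for a.e.\ $x$; letting $I$ run over $[-k,k]$, $k\in\N$, and taking the union of the exceptional null sets, $\rho_x^{\mathrm{sing}}=0$ for a.e.\ $x$. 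Since the construction is covariant under the flow, the same holds at every reference point, and as these reference spectral measures form a cyclic family for $H_x$, we conclude that every spectral measure of $H_x$ is absolutely continuous for a.e.\ $x$.

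The step I expect to carry the real weight is the identification of $\tfrac{1}{2\pi\,\Im u[F,v]}$ with the absolutely continuous spectral density: this is exactly Kotani's theorem in its strong form (vanishing of the Lyapunov exponent on a set of positive Lebesgue measure forces reflectionless behaviour and the finiteness and positivity of the imaginary part of the Weyl solution), together with the uniqueness statement of Lemma \ref{bla3031}, which lets one identify the abstractly produced covariant section $u[F,v]$ with the boundary value of the Weyl $m$-function. Granting this, the remainder is bookkeeping with Tonelli's theorem and the disintegration $dN=\int\rho_x\,d\mmu$, the only mild subtleties being the restriction to bounded energy windows (forced by $dN$ being infinite in the continuum) and the covariance argument needed to pass from the spectral measure at one point to the full spectral measure of $H_x$.
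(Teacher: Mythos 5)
The paper does not prove this statement at all: Theorem \ref{3041} is quoted as a known result of Kotani (via the survey \cite{D}), so there is no in-paper argument to compare against. What you have written is essentially a reconstruction of the standard Kotani-theory proof of that cited theorem, and it is sound in outline. In particular you handle correctly the one genuinely delicate point: absolute continuity of $N$ alone does not rule out singular parts of the $\rho_x$, since an average over $x$ of singular measures can be absolutely continuous. The argument really does need the identity between the density of $dN$ and the $\mmu$-average of the densities of the ac parts of the $\rho_x$, which you obtain by combining the formula in Lemma \ref{bla3031} with Kotani's identification of the invariant section $u[F,v](E,\cdot)$ with the boundary values of the Weyl functions (reflectionless relation $m_-=-\overline{m_+}$, so that $\Im G(E+i0;0,0)=\tfrac{1}{2\Im u}$); only then does $\int\rho_x^{\mathrm{sing}}(I)\,d\mmu=0$ follow. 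The normalizations you use are consistent with the paper's $\frac{d}{dE}N=\frac{1}{2\pi}\int\frac{1}{\Im u}\,d\mmu$ and with $dN=\int\rho_x\,d\mmu$ obtained by averaging the diagonal of the spectral kernel over one period cell and using covariance.

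Two points are stated more loosely than they deserve, though both are standard and fixable. First, in the continuum the relation $dN=\int\rho_x\,d\mmu$ with $\rho_x$ the Herglotz measure of the diagonal Green's function requires the identification of that measure with the diagonal of the spectral projection kernel (continuity of the kernel for locally bounded potentials), which you should at least invoke. Second, the final passage from ``the diagonal measure at the reference point is purely ac'' to ``the whole spectral measure of $H_x$ is purely ac'' is not literally a cyclic-vector statement in the continuum; the clean way is covariance (for a.e.\ $x$ the diagonal measures $\mu_{x,t}$ are ac for a.e.\ $t$) together with positive semidefiniteness of the kernel of $E_{H_x}(\Delta)$, which gives $|E_{H_x}(\Delta)(s,t)|^2\le E_{H_x}(\Delta)(s,s)E_{H_x}(\Delta)(t,t)$ and hence $E_{H_x}(\Delta)=0$ for any Lebesgue-null $\Delta$ carrying the putative singular part. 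With these routine completions your proposal is a correct proof of the quoted theorem.
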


\subsection{Solenoidal flows}

If $K$ and $K'$ are compact Abelian groups, a projection $K \to K'$ is a
continuous surjective homomorphism.

Let $K$ be a totally disconnected compact
Abelian group, and let $i:\Z \to K$ be a
homomorphism with dense image.  The solenoid associated to $(K,i)$ is the
compact Abelian group obtained as the quotient of $K \times \R$ by the
subgroup $\{(i(-j),j)\}_{j \in \Z}$.  It comes with a canonical
projection $\pi:S \to \R/\Z$, $\pi(x,t)=t$.

Given $S$ as above, the solenoidal flow on $S$ is $F^S_t:S \to S$,
$F^S_t(x,s)=(x,t+s)$.

A time-change of $F^S$ is a flow $F_t:S \to S$ of the form
$F_t(x,s)=(x,h(x,s,t))$ with $t \mapsto h(x,s,t)$ $C^1$ for each $x$ and
$s$, and such that
$w_F(x,s)=\frac {d} {dt} h(x,s,t)|_{t=0}$ is a continuous positive
function of $(x,s)$.  Notice that any continuous positive function in $S$
generates a time-change.

Notice that a time change of a solenoidal flow is uniquely ergodic, and its
invariant probability measure is absolutely continuous with respect to the
Haar measure, with continuous positive density proportional to $\frac {1}
{w_F}$.

If $(K,i)$ and $(K',i')$ are as above, and there is a (necessarily unique)
projection $p_{K,K'}:K \to K'$ such that $p_{K,K'} \circ i=i'$, then we can
define a projection $p_{S,S'}:S \to S'$ in the natural way.

If $F$ and $F'$ are time-changes of $F^S$ and $F^{S'}$,
we say that $F$ is $\epsilon$-close to the lift of $F'$
if $|\ln w_{F'} \circ p_{S,S'}-\ln w| \leq \epsilon$.
We say that $v:S \to \R$
is $\epsilon$-close to the lift of $v':S' \to \R$ if
$|v' \circ p_{S,S'}-v| \leq \epsilon$.

In all cases we will deal with (e.g., $K=\Z/n \Z$),
there is a natural choice for the embedding
$i:\Z \to K$.  Thus we will often omit the embedding $i$ from the notation below.





\subsection{Projective limits}

An increasing sequence of compact Abelian groups is the data given by a
sequence $K_j$ of compact Abelian groups, and of projections
$p_{j',j}:K_{j'} \to K_j$, $j'>j$ such
that $p_{j_2,j_1} \circ p_{j_3,j_2}=p_{j_3,j_1}$.

Given such an increasing family of compact Abelian groups, there exists a
compact Abelian group $K$ and a sequence of projections $p_j:K
\to K_j$ such that $p_{j',j} \circ p_{j'}=p_j$ for every $j'>j$, and the
$p_j$ separate points in $K$: one takes $K$ as the set of infinite sequences
$x_j \in K_j$ with $p_{j',j}(x_{j'})=x_j$, endowed with the product topology
and obvious group structure.  We call $K$ the projective limit of the $K_j$.

When considering pairs $(K_j,i_j)$ as before,
we will assume moreover that the
projections are compatible with the embeddings, so that
$p_{j',j}=p_{K_{j'},K_j}$.

Notice that if the $K_j$ are totally disconnected then the projective limit
is totally disconnected.  Moreover, if $S_j$ is the solenoid over $K_j$, then
the projective limit $S$ of the $S_j$ is the solenoid over $K$.

An immediate application of projective limits is the following:

\begin{lemma} \label {bla3022}

Let $S_j$ be an increasing sequence of solenoids, and let $S$ be the
projective limit.
Let $F^j$ be time-changes of the solenoidal flows $F^{S_j}$.
Let $v_j:S_j \to \R$ be continuous functions.  Assume that for $j'>j$,
$(F^{j'},v_{j'})$ is $\epsilon_j$-close
to the lift of $(F^j,v_j)$,
where $\epsilon_j \to 0$.  Then there exists a time-change $F$ of the
solenoidal flow $F^S$, and a continuous function $v:S \to \R$ such that
$(F,v)$ is $\epsilon_j$-close to the lift of
$(F^j,v_j)$ for every $j$.

\end{lemma}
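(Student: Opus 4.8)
The plan is to reduce the statement to a soft completeness argument in the Banach space $C(S,\R)$ of continuous real functions on the compact group $S$, by pulling all the data back to the projective limit.

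First I would fix the relevant identifications. Since each $S_j$ is the solenoid over $K_j$ and the projections of the $K_j$ are compatible with the embeddings, the projective limit $S$ is the solenoid over $K=\varprojlim K_j$, there are canonical projections $p_j:=p_{S,S_j}:S\to S_j$, and these satisfy $p_{j',j}\circ p_{j'}=p_j$ for $j'>j$. Recall also that a time-change of $F^{S_j}$ is completely encoded by its continuous positive density $w_{F^j}:S_j\to(0,\infty)$, and that, by definition, $(F^{j'},v_{j'})$ being $\epsilon_j$-close to the lift of $(F^j,v_j)$ means precisely $|\ln w_{F^j}\circ p_{j',j}-\ln w_{F^{j'}}|\le\epsilon_j$ and $|v_j\circ p_{j',j}-v_{j'}|\le\epsilon_j$ on $S_{j'}$.

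Now set $g_j:=\ln w_{F^j}\circ p_j\in C(S,\R)$ and $\tilde v_j:=v_j\circ p_j\in C(S,\R)$. Composing the two hypothesis inequalities with the projection $p_{j'}:S\to S_{j'}$ and using $p_{j',j}\circ p_{j'}=p_j$ yields $\|g_j-g_{j'}\|_\infty\le\epsilon_j$ and $\|\tilde v_j-\tilde v_{j'}\|_\infty\le\epsilon_j$ for all $j'>j$. Hence, for $m\ne n$, $\|g_m-g_n\|_\infty\le\epsilon_{\min(m,n)}$ and likewise for $\tilde v$; since $\epsilon_j\to0$, the sequences $(g_j)$ and $(\tilde v_j)$ are uniformly Cauchy, so by completeness of $C(S,\R)$ they converge uniformly to continuous functions $\phi$ and $v$ on $S$. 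Letting $j'\to\infty$ in $\|g_j-g_{j'}\|_\infty\le\epsilon_j$ gives $\|\phi-g_j\|_\infty\le\epsilon_j$, and similarly $\|v-\tilde v_j\|_\infty\le\epsilon_j$, for every $j$.

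Finally, $e^{\phi}:S\to(0,\infty)$ is continuous and positive, hence (as noted in the excerpt, any such function generates a time-change) it is the density $w_F$ of a time-change $F$ of $F^S$, so that $\ln w_F=\phi$. Then $\|\ln w_F-\ln w_{F^j}\circ p_j\|_\infty=\|\phi-g_j\|_\infty\le\epsilon_j$ says exactly that $F$ is $\epsilon_j$-close to the lift of $F^j$, and $\|v-v_j\circ p_j\|_\infty\le\epsilon_j$ says $v$ is $\epsilon_j$-close to the lift of $v_j$; thus $(F,v)$ is the desired pair. The one point requiring attention is the bookkeeping of the projections — verifying that the pulled-back densities and sampling functions form genuine Cauchy sequences on $S$ and that $p_{j',j}\circ p_{j'}=p_j$ — together with the routine use of compactness of $S$ (so uniform limits of continuous functions stay continuous); there is no real analytic difficulty beyond this.
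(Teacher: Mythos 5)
Your proof is correct and is essentially the paper's own argument (the paper's proof is just the one-line statement ``take $v=\lim v_j\circ p_j$, $w_F=\lim w_{F_j}\circ p_j$''), with the Cauchy/uniform-convergence details spelled out. Nothing to add.
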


\begin{proof}

Define $S$ as a projective limit of the $S_j$ and take $v=\lim v_j \circ
p_j$, $w_F=\lim w_{F_j} \circ p_j$.
\end{proof}

\subsection{Lifting properties}

The following is a standard ``semi-continuity of the spectrum'' property:

\begin{lemma} \label {bla3014new}

Let $F'$ be a time-change of a solenoidal flow $F^{S'}$, and let $v':S'
\to \R$ be a continuous function.  Then for every $M>0$, $\epsilon>0$, there
exists $\kappa>0$ such that if either
$(F,v)$ is $\kappa$-close to the lift of $(F',v')$, or $(F',v')$ is
$\kappa$-close to the lift of $(F,v)$, then
$\Sigma(F,v) \cap (-\infty,M]$ is contained in the $\epsilon$-neighborhood
of $\Sigma(F',v')$, and $\Sigma(F',v') \cap (-\infty,M]$
is contained in the $\epsilon$-neighborhood of $\Sigma(F,v)$.

\end{lemma}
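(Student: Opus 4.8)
The plan is to reduce the statement to a standard fact about continuity of the spectrum for transfer matrix cocycles, so the key point is to translate the closeness hypotheses between $(F,v)$ and $(F',v')$ into a uniform estimate on the transfer matrices $A[F,v](E,x,0,T)$ versus $A[F',v'](E,x',0,T')$ over a compatible range of times. Recall that $\Sigma(F,v) = \{E : 0 \in \operatorname{spec}(H_{F,v,x})\text{ for some, equiv. every, }x\}$, and that $E \notin \Sigma(F,v)$ iff the cocycle is uniformly hyperbolic, which (for bounded potentials, on a compact energy range $(-\infty,M]$ intersected with a fixed large interval, since $\Sigma \subset [-C,C]$ for $\|v\|_\infty \le C$) is an open condition that is stable under small perturbations of the cocycle in the appropriate topology. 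So it suffices to show: for $E$ in a fixed compact set, if $(F,v)$ is $\kappa$-close to the lift of $(F',v')$ then the time-$T$ transfer matrices of $(F,v)$ are uniformly (in $x$, and $E$ in the compact set) $O(\kappa)$-close — with constants depending on $M$, $T$, and $\|v'\|_\infty$ — to those of $(F',v')$ at the corresponding base point $p_{S,S'}(x)$ and a suitably reparametrized time.

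First I would fix $M$ and $\epsilon$, and note it is enough to treat $E \in [-M', M']$ for a fixed $M'$ depending on $M$ and $\|v'\|_\infty$, since outside a fixed compact set no spectrum can appear (using that closeness bounds $\|v\|_\infty$ in terms of $\|v'\|_\infty + \kappa$). Next I would unwind the definitions: an orbit segment of $F$ of duration $t$ starting at $(x,s)$ projects under $p_{S,S'}$ to an orbit segment of $F'$ starting at $p_{S,S'}(x,s)$, but run at a time-reparametrized speed determined by $w_F$ versus $w_{F'} \circ p_{S,S'}$; the hypothesis $|\ln w_{F'} \circ p_{S,S'} - \ln w_F| \le \kappa$ says these speeds differ multiplicatively by $e^{\pm\kappa}$, and $|v' \circ p_{S,S'} - v| \le \kappa$ says the sampled potentials differ by at most $\kappa$ pointwise. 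Then the transfer matrix ODE $\frac{d}{ds}A = \begin{pmatrix} 0 & -E-V(s) \\ 1 & 0 \end{pmatrix} A$ has generator differing by $O(\kappa(1+|E|))$ in the $v$-term and the time reparametrization contributes another $O(\kappa)$ multiplicative distortion of the whole generator; Grönwall's inequality over a bounded time interval $[0,T]$ then yields $\|A[F,v](E,x,0,T) - A[F',v'](E, p_{S,S'}(x), 0, T')\| \le C(M,T,\|v'\|_\infty)\,\kappa$ where $T'$ is the reparametrized duration, itself within $e^{\pm\kappa T}$ of $T$.

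Finally, I would invoke uniform hyperbolicity: if $E \in (-\infty,M] \setminus (\epsilon\text{-nbhd of }\Sigma(F',v'))$, then the cocycle of $(F',v')$ is uniformly hyperbolic at all energies in a $\tfrac{\epsilon}{2}$-neighborhood of $E$, with a uniform lower bound (over that neighborhood and over the compact base) on the expansion of $A[F',v'](E', x', 0, T_0)$ for some fixed large $T_0$; choosing $T = T_0$ (or a multiple) and $\kappa$ small enough that the $C\kappa$ perturbation cannot destroy this cone-field/expansion estimate, we conclude $A[F,v]$ is also uniformly hyperbolic at $E$, i.e. $E \notin \Sigma(F,v)$. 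This gives the first containment; the other two ($\Sigma(F',v')$ near $\Sigma(F,v)$, and the symmetric role of the two closeness hypotheses) follow by the same argument with the roles of $(F,v)$ and $(F',v')$ exchanged — note the hypothesis is stated symmetrically ("either ... or ..."), and the transfer-matrix comparison above is symmetric in the two flows. The main obstacle I anticipate is purely bookkeeping: carefully handling the time reparametrization $s \mapsto h(x,s,t)$ so that orbit segments of the two flows of genuinely comparable duration are being compared, and making sure the Grönwall constant depends only on $M$, a fixed $T_0$, and $\|v'\|_\infty$ and not on $x$ or on the fine structure of $S$ — this is where unique ergodicity and compactness of the phase spaces are used to get uniformity, and where one must be slightly careful that $p_{S,S'}$ is a genuine topological factor map intertwining the (reparametrized) flows.
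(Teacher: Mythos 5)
The paper gives no proof of this lemma (it is quoted as a standard semi-continuity property), so the comparison is with the expected standard argument. Your treatment of the containment $\Sigma(F,v) \cap (-\infty,M] \subset$ ($\epsilon$-neighborhood of $\Sigma(F',v')$) is essentially that argument and is sound: restrict to a compact energy window $[-C,M]$ using the a priori bound on $\|v\|_\infty$; for $E\le M$ at distance $\ge\epsilon$ from $\Sigma(F',v')$ the $(F',v')$-cocycle admits an exponential dichotomy with a time $T_0$ and expansion bound uniform over the compact set of such $E$; compare transfer matrices over the fixed window $[0,T_0]$ via Gr\"onwall, using that the two flows share the solenoidal orbits, so $\kappa$-closeness together with the uniform modulus of continuity of $v'$ along the flow makes the sampled potentials uniformly close on $[0,T_0]$; then invoke openness of uniform hyperbolicity. (One small misreading: the ``either \dots or'' in the statement concerns which solenoid is the extension, i.e.\ the direction of the factor map $p$, not which pair is held fixed; both cases reduce to a same-space comparison after lifting, as your scheme implicitly does.)

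The genuine gap is the final step, where you claim the containment $\Sigma(F',v') \cap (-\infty,M] \subset$ ($\epsilon$-neighborhood of $\Sigma(F,v)$) ``follows by the same argument with the roles exchanged.'' It does not, because the quantifiers are not symmetric: $\kappa$ must be chosen from $(F',v')$, $M$, $\epsilon$ alone, before $(F,v)$ is known, whereas the role-exchanged argument needs uniform hyperbolicity constants for the $(F,v)$-cocycle at energies $\epsilon$-away from $\Sigma(F,v)$, and those constants are not a priori controlled in terms of $(F',v')$. To close this you need either (i) a quantitative bound making the dichotomy constants depend only on $\epsilon$, $M$, and $\|v\|_\infty \le \|v'\|_\infty+1$ (e.g.\ a Combes--Thomas estimate: distance $\ge \epsilon$ to the spectrum forces Green function decay with uniform rate and prefactor), or, more simply, (ii) the standard Weyl-sequence argument for this direction: for $E \in \Sigma(F',v') \cap (-\infty,M]$ take a compactly supported approximate eigenfunction of the $(F',v')$-operator with error $<\epsilon/2$, supported in a window of length $L$ that can be taken uniform over the compact set $\Sigma(F',v')\cap[-C,M]$; for $\kappa$ small the potentials of $(F,v)$ and of the lift of $(F',v')$ are uniformly close on windows of length $L$, so the same test function shows that the distance from $E$ to $\sigma(H_{(F,v),x})\subset\Sigma(F,v)$ is less than $\epsilon$. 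Note that this direction requires no hyperbolicity at all, while your first direction is the one that genuinely uses the minimal/ergodic structure; collapsing the two under ``symmetry'' is exactly where the write-up breaks.
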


It easily implies:

\begin{lemma} \label {bla3014}

Let $F'$ be a time-change of a solenoidal flow $F^{S'}$, and let $v':S'
\to \R$ be a continuous function.  Then for every $M>0$, $\epsilon>0$, there
exists $\kappa>0$ such that if
$(F,v)$ is $\kappa$-close to the lift of $(F',v')$, then we have
$|\Sigma(F,v) \cap (-\infty,M] \setminus \Sigma(F',v')|<\epsilon$.

\end{lemma}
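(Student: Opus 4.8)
The plan is to deduce this from the semi-continuity statement of Lemma \ref{bla3014new} together with the fact that, for time-changes of solenoidal flows, the spectrum is a compact subset of $\R$ whose complement in any bounded interval has only finitely many components of size above a given threshold — more precisely, that $\Sigma(F',v')$ has zero Lebesgue-measure boundary and that a neighborhood of a compact set shrinks to the set in measure. Concretely, fix $M>0$ and $\epsilon>0$. First I would apply Lemma \ref{bla3014new} with parameters $M+1$ and a small $\delta>0$ to be chosen, producing $\kappa>0$ such that $\kappa$-closeness of $(F,v)$ to the lift of $(F',v')$ forces $\Sigma(F,v)\cap(-\infty,M+1]$ to lie in the $\delta$-neighborhood $\Sigma(F',v')_\delta$ of $\Sigma(F',v')$. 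Hence
\be
\Sigma(F,v)\cap(-\infty,M]\setminus\Sigma(F',v')\subset
\bigl(\Sigma(F',v')_\delta\setminus\Sigma(F',v')\bigr)\cap(-\infty,M].
\ee

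The second step is to bound the measure of the right-hand side. Since $\Sigma(F',v')$ is closed, $\Sigma(F',v')_\delta\setminus\Sigma(F',v')$ decreases (as $\delta\downarrow0$) to $\emptyset$; intersecting with the bounded set $(-\infty,M]$ and invoking continuity of Lebesgue measure from above, we get
\be
\lim_{\delta\to0}\bigl|\bigl(\Sigma(F',v')_\delta\setminus\Sigma(F',v')\bigr)\cap(-\infty,M]\bigr|=0.
\ee
Thus I choose $\delta$ small enough that this quantity is $<\epsilon$, and the corresponding $\kappa$ from step one does the job.

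The only point requiring a little care — and the place I'd expect the main (minor) obstacle — is the measurability and boundedness bookkeeping: one must know that $\Sigma(F,v)\cap(-\infty,M]$ is a genuine (Borel, in fact compact) set so that its measure makes sense, and one must have an a priori upper bound on $\Sigma(F,v)$ so that "$(-\infty,M]$" really captures the relevant part. Both are standard: spectra of Schrödinger operators with bounded potentials are compact, and for potentials uniformly bounded by some $C$ (which $\kappa$-closeness guarantees once $\kappa\le 1$, say) the spectrum lies in a fixed compact interval depending only on $C$. With these observations the argument above is complete; no new estimate beyond Lemma \ref{bla3014new} and elementary measure theory is needed.
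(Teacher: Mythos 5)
Your proof is correct and is exactly the paper's intended route: the paper gives no written argument, simply asserting that Lemma \ref{bla3014new} ``easily implies'' Lemma \ref{bla3014}, and your deduction (inclusion of $\Sigma(F,v)\cap(-\infty,M]$ in a $\delta$-neighborhood of $\Sigma(F',v')$, then letting $|\bigl(\Sigma(F',v')_\delta\setminus\Sigma(F',v')\bigr)\cap(-\infty,M]|\to 0$ as $\delta\to 0$) is that easy implication. The only slip, harmless here, is the claim that the spectrum is compact: in the continuum setting it is unbounded above, but all your argument needs is that it is closed and uniformly bounded below (the potentials are uniformly bounded), which keeps the measurability and finite-measure bookkeeping valid.
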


We say that $(F,v)$ is $(\epsilon_1,C_1,M)$-crooked if there
is a set $\Gamma \subset \Sigma \cap (-\infty,M]$ such that
$|(\Sigma \setminus \Gamma) \cap (-\infty,M]|<\epsilon_1$, and for
every $E \in \Gamma$, the set of $x \in X$ such that
\be
\inf_{w \in \R^2,\, \|w\|=1}
\sup_{t>0} \|A[F,v](E,x,0,t) \cdot w\|>C_1
\ee
has $\mmu$-measure (strictly) larger than $1-\epsilon_1$.

This notion provides a quantification of how large the
eigenfunctions are for most of the parameters.  Largeness can be checked in
many cases by bounding the $u(E,x)$, see Lemma \ref {bla3}.

The following is obvious, but convenient:

\begin{lemma} \label {bla3007}

Let $F'$ be a time-change of a solenoidal flow
$F^{S'}$, and let $v':S'
\to \R$ be a continuous function.  Assume that $(F',v')$ is
$(\epsilon_1,C_1,M)$-crooked.  Then there exists $\kappa>0$ such that if
$(F,v)$ is $\kappa$-close to the lift of $(F',v')$ then $(F,v)$ is
$(\epsilon_1,C_1,M)$-crooked.

\end{lemma}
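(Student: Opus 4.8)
The plan is to take, as the set $\Gamma$ for $(F,v)$, an intersection $H\cap\Sigma(F,v)\cap(-\infty,M]$, where $H$ is a slightly shrunk copy of a set $\Gamma'$ witnessing that $(F',v')$ is $(\epsilon_1,C_1,M)$-crooked, and to transfer the ``large eigenfunction'' condition from $(F',v')$ to $(F,v)$ through continuity of transfer matrices on bounded time intervals. Write $p=p_{S,S'}$, and let $\mmu,\mmu'$ be the invariant measures of $F,F'$. The first ingredient is the standard fact that for every $T>0$ one has $\sup\{\|A[F,v](E,x,0,t)-A[F',v'](E,p(x),0,t)\|:E\leq M,\ x\in S,\ 0\leq t\leq T\}\to 0$ as $\kappa\to 0$: since $p$ intertwines $F^S$ with $F^{S'}$, the drift between $p(F_s(x))$ and $F'_s(p(x))$ along a common $F^{S'}$-orbit is controlled, on $[0,T]$, by $\kappa$ together with the moduli of continuity of $w_{F'}$ and $v'$, so $\sup_{0\le s\le T}|v(F_s(x))-v'(F'_s(p(x)))|$ is small, and Gronwall's inequality closes the estimate. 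In the same way $p_*\mmu$ and $\mmu'$ are mutually $[e^{-2\kappa},e^{2\kappa}]$-comparable, because $p$ carries Haar measure to Haar measure and the densities $1/w_F$ and $1/(w_{F'}\circ p)$ are multiplicatively $\kappa$-close.

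Next I would uniformize the crooked condition for $(F',v')$. Fix such a $\Gamma'$ and set $\epsilon_1':=|(\Sigma(F',v')\setminus\Gamma')\cap(-\infty,M]|<\epsilon_1$. For an energy $E$ and $y\in S'$ put $\phi_n(E,y)=\inf_{\|w\|=1}\sup_{0\leq t\leq n}\|A[F',v'](E,y,0,t)\cdot w\|$. Each $\phi_n$ is continuous in $(E,y)$ (a supremum over a compact time interval of a jointly continuous function, followed by an infimum over the unit circle), and a Dini-type argument on the circle gives $\phi_n(E,y)\nearrow\inf_{\|w\|=1}\sup_{t>0}\|A[F',v'](E,y,0,t)\cdot w\|$ as $n\to\infty$. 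Consequently the measurable sets $H_n:=\{E\in\Gamma':\mmu'(\{y:\phi_n(E,y)>C_1+1/n\})>1-\epsilon_1+1/n\}$ increase to $\Gamma'$, and I fix $n$ with $|(\Gamma'\setminus H_n)\cap(-\infty,M]|<(\epsilon_1-\epsilon_1')/2$. On $H_n$ the crooked condition now holds with a uniform finite horizon $n$ and a uniform slack $1/n$.

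Then I would take $\kappa$ so small that (i) the quantity of the first paragraph with $T=n$ is less than $1/(2n)$; (ii) $1-e^{-2\kappa}<1/(2n)$; and (iii) by Lemma \ref{bla3014}, $|\Sigma(F,v)\cap(-\infty,M]\setminus\Sigma(F',v')|<(\epsilon_1-\epsilon_1')/2$. Set $\Gamma:=H_n\cap\Sigma(F,v)\cap(-\infty,M]$. For $E\in\Gamma$ and $x$ with $p(x)\in\{y:\phi_n(E,y)>C_1+1/n\}$, (i) gives $\inf_{\|w\|=1}\sup_{0\leq t\leq n}\|A[F,v](E,x,0,t)\cdot w\|>C_1+1/n-1/(2n)>C_1$, hence a fortiori $\inf_{\|w\|=1}\sup_{t>0}\|A[F,v](E,x,0,t)\cdot w\|>C_1$; so $p^{-1}(\{y:\phi_n(E,y)>C_1+1/n\})$ is contained in the ``good set'' for $(F,v)$ at $E$, whose $\mmu$-measure is therefore at least $e^{-2\kappa}\mmu'(\{y:\phi_n(E,y)>C_1+1/n\})\geq e^{-2\kappa}(1-\epsilon_1+1/n)>1-\epsilon_1$ by (ii). Finally $(\Sigma(F,v)\setminus\Gamma)\cap(-\infty,M]$ lies in the union of $\Sigma(F,v)\cap(-\infty,M]\setminus\Sigma(F',v')$, of $(\Sigma(F',v')\setminus\Gamma')\cap(-\infty,M]$, and of $(\Gamma'\setminus H_n)\cap(-\infty,M]$, hence has measure $<(\epsilon_1-\epsilon_1')/2+\epsilon_1'+(\epsilon_1-\epsilon_1')/2=\epsilon_1$. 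Thus $(F,v)$ is $(\epsilon_1,C_1,M)$-crooked.

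The main obstacle is the uniformization of the second paragraph: the definition of crookedness only provides pointwise strict inequalities, with no a priori uniform finite time horizon and no a priori uniform measure slack, so the condition cannot be transferred verbatim. The remedy — passing to the slightly smaller set $H_n$ — is available precisely because the inequalities in the definition of $(\epsilon_1,C_1,M)$-crooked are strict, which leaves just enough room to absorb the losses coming from Lemma \ref{bla3014} and from the perturbation. The drift estimate of the first paragraph is routine but does genuinely use the continuity of the time-change density $w_{F'}$ and of $v'$.
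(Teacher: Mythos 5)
Your proposal is correct and follows the same route as the paper, whose proof is the one-line observation that $v\circ F_t$ is uniformly close to $v'\circ F'_t\circ p_{S,S'}$ for bounded $t$; you have simply supplied the details the paper leaves implicit (the reduction to a finite time horizon via the strict inequalities in the definition of crookedness, the comparability of the invariant measures, and the control of $\Sigma(F,v)\setminus\Sigma(F',v')$ through Lemma \ref{bla3014}). No gaps.
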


\begin{proof}

Just use that $v \circ F_t$ is close to $v' \circ F'_t \circ p_{S,S'}$ for
$t$ (arbitrarily) bounded.
\end{proof}

We say that $(F,v)$ is $(\epsilon,M)$-good if
\be
\sup_{\Sigma \cap (-\infty,M]} L(E)<\epsilon.
\ee

It also trivially behaves well under lifts:

\begin{lemma} \label {bla5}

Let $F'$ be a time-change of a solenoidal flow $F^{S'}$, and let $v':S'
\to \R$ be a continuous function.  Assume that $(F',v')$ is
$(\epsilon,M)$-good.  Then there exists $\kappa>0$ such that if
$(F,v)$ is $\kappa$-close to the lift of $(F',v')$, then $(F,v)$ is
$(\epsilon,M)$-good.

\end{lemma}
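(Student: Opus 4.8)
The plan is to obtain Lemma~\ref{bla5} by combining two one-sided facts pointing in the right direction: the upper semicontinuity of the Lyapunov exponent, jointly in the energy and in the data (for data close to the lift of $(F',v')$), and the semicontinuity of the spectrum from Lemma~\ref{bla3014new}. A short compactness argument then bridges the gap between ``$L$ small on $\Sigma(F',v')$'' and ``$L$ small on $\Sigma(F,v)$''.

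First I would record the upper semicontinuity. By the subadditive (Fekete) argument applied to $\ln\|A[F,v](E,x,0,T)\|$ --- subadditive along the flow because of the cocycle identity $A[F,v](E,x,0,T+S)=A[F,v](E,F_T(x),0,S)\,A[F,v](E,x,0,T)$ --- one has
\[
L(E)=\inf_{n\in\N}\frac1n\int\ln\|A[F,v](E,x,0,n)\|\,d\mmu(x).
\]
For fixed $n$, the integrand is continuous in $(x,E)$, and as the data approaches the lift of $(F',v')$ (that is, as $\kappa\to0$) it converges uniformly, for $x\in S$ and $E$ in bounded sets, to $\ln\|A[F',v'](E,p_{S,S'}(x),0,n)\|$: on a bounded time interval $v\circ F_s$ converges uniformly to $v'\circ F'_s\circ p_{S,S'}$, hence the defining ODEs and their solutions converge uniformly. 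Likewise the invariant measure of $F$ has density proportional to $1/w_F$ with respect to Haar, $w_F\to w_{F'}\circ p_{S,S'}$ uniformly, and $p_{S,S'}$ pushes it forward to a measure converging to the invariant measure of $F'$. Thus $\frac1n\int\ln\|A[F,v](E,x,0,n)\|\,d\mmu(x)$ is continuous in $E$ and in the data near the lift, with value at the lift equal to the corresponding quantity for $(F',v')$; being an infimum over $n$ of continuous functions, $((F,v),E)\mapsto L(E)$ is upper semicontinuous at the lift of $(F',v')$ (and in particular $E\mapsto L(E)$ is upper semicontinuous for fixed data).

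Then I would argue by contradiction: if the lemma failed there would be data $(F_k,v_k)$ that is $\kappa_k$-close to the lift of $(F',v')$ with $\kappa_k\to0$, and --- using that $\Sigma(F_k,v_k)\cap(-\infty,M]$ is compact (a bounded continuum potential $V$ has $\Sigma\subset[\inf V,\infty)$, and $\inf v_k\geq\inf v'-\kappa_k$) and that the upper semicontinuous function $L_{(F_k,v_k)}$ attains its supremum there --- energies $E_k\in\Sigma(F_k,v_k)\cap(-\infty,M]$ with $L_{(F_k,v_k)}(E_k)\geq\epsilon$. The $E_k$ lie in a fixed compact interval; passing to a subsequence, $E_k\to E_\infty\leq M$. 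By Lemma~\ref{bla3014new}, each $E_k$ lies in an arbitrarily small neighborhood of $\Sigma(F',v')$ once $k$ is large, so $E_\infty\in\Sigma(F',v')$; and joint upper semicontinuity gives $L_{(F',v')}(E_\infty)\geq\limsup_k L_{(F_k,v_k)}(E_k)\geq\epsilon$, contradicting $(\epsilon,M)$-goodness of $(F',v')$. The only point requiring actual work is the upper semicontinuity of $L$ --- really the uniform convergence of transfer matrices and of invariant densities under closeness to the lift --- which is routine but genuinely uses the solenoidal structure; everything else is a soft compactness argument resting on Lemma~\ref{bla3014new}.
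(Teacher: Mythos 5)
Your argument is correct: the upper semicontinuity of $L$ (via the infimum formula $L(E)=\inf_n\frac1n\int\ln\|A(E,x,0,n)\|\,d\mmu$, uniform convergence of transfer matrices and invariant densities under closeness to the lift) combined with Lemma \ref{bla3014new} and a compactness/contradiction step does prove Lemma \ref{bla5}. The paper actually states this lemma without proof, introducing it with ``it also trivially behaves well under lifts,'' and your write-up is precisely the routine semicontinuity verification being left implicit there.
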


We say that $(F,v)$ is $(\epsilon,M)$-nice if
\be
N(M)-\int_{-\infty}^M \frac {dN} {dE} dE<\epsilon.
\ee
Niceness provides a measure of how absolutely continuous the i.d.s. is.

The following deserves an argument.

\begin{lemma} \label {bla3015}

Let $F'$ be a time-change of a periodic solenoidal flow
$F^{S'}$,\footnote {This result still holds without assuming periodicity.}
and let $v':S' \to \R$ be a continuous function.  Assume that
$(F',v')$ is
$(\epsilon,M)$-nice.  Then there exists $\kappa>0$ with the following
property.  Assume that $(F,v)$ is $\kappa$-close to the lift of
$(F',v')$, the Lyapunov exponent for $(F,v)$ vanishes in the spectrum,
and $|(\Sigma(F',v') \setminus \Sigma(F,v)) \cap (-\infty,M]|<\kappa$.
Then $(F,v)$ is $(\epsilon,M)$-nice.

\end{lemma}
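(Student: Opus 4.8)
The plan is to compare the density of states of $(F,v)$ with that of the periodic model $(F',v')$ on $(-\infty,M]$, exploiting that niceness is an $L^1$-defect statement about $dN/dE$ against the total mass $N(M)$. First I would record that, since $F'$ is a time-change of a \emph{periodic} solenoidal flow, $(F',v')$ is genuinely periodic: there is $T'$ with $A[F',v'](E,x,0,t)$ periodic in $t$, the i.d.s.\ $N'$ is absolutely continuous on $\Omega(F',v')$ by the formula preceding Lemma \ref{bla3003}, and $N'(M)-\int_{-\infty}^M (dN'/dE)\,dE = 0$ away from the finitely many band edges; the hypothesis ``$(F',v')$ is $(\epsilon,M)$-nice'' then just says $N'(M)<\epsilon$ plus what is genuinely absolutely continuous, i.e.\ it forces $\int_{-\infty}^M (dN'/dE)\,dE > N'(M)-\epsilon$. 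The key monotonicity input is that the i.d.s.\ at $M$ is controlled by the spectrum below $M$: $N(M)=\int_{\Sigma\cap(-\infty,M]} dN$, and $N(M)$ depends continuously on $(F,v)$ (it is the same semicontinuity phenomenon underlying Lemma \ref{bla3014new}, since $N(M)$ can be computed via rotation numbers of the transfer cocycle, which vary continuously under $\ell^\infty$-perturbation of the potential for $E\le M$). So by choosing $\kappa$ small we get $|N(M)-N'(M)|<\epsilon/4$.

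Second I would control $\int_{-\infty}^M (dN/dE)\,dE$ from below. Since the Lyapunov exponent for $(F,v)$ vanishes on $\Sigma(F,v)$, Lemma \ref{bla3031} applies: $dN/dE>0$ a.e.\ on $\Sigma_0=\Sigma(F,v)$ and there is a measurable $u=u[F,v]$ with $dN/dE=\frac{1}{2\pi}\int (\Im u(E,x))^{-1}\,d\mmu(x)$. The strategy is to show $\int_{-\infty}^M (dN/dE)\,dE \geq \int_{-\infty}^M (dN'/dE)\,dE - (\text{small})$, by comparing, for a.e.\ fixed $E$ in the part of $\Sigma(F',v')\cap(-\infty,M]$ that survives into $\Sigma(F,v)$, the invariant section $u[F,v](E,\cdot)$ with the periodic section $u[V'](E)$. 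Here is where Lemma \ref{bla3003} does the work: for the periodic potential $V'$ and a.e.\ such $E$, any (not necessarily invariant) solution $\tilde u$ of the $(F',v')$-cocycle has orbit-average of $(\Im\tilde u)^{-1}$ at least $\frac{1}{T'}\int_0^{T'}(\Im u[V'])^{-1}\,dt - \epsilon'$ once we iterate enough periods; and since $(F,v)$ is $\kappa$-close to the lift of $(F',v')$, the $(F,v)$-invariant section $u[F,v](E,x)$, evaluated along a generic orbit, is for a very long (but finite) time an approximate solution of the $(F',v')$-cocycle — close enough that its orbit-average of $(\Im\,\cdot\,)^{-1}$ is at least $\frac{1}{T'}\int_0^{T'}(\Im u[V'])^{-1}\,dt - 2\epsilon'$. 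Integrating this in $x$ against $\mmu$ and using unique ergodicity of $F$ (the density of its invariant measure is continuous and close to that of $F'$) converts the orbit-average into the phase-space average $\frac{1}{2\pi}\int(\Im u[F,v])^{-1}\,d\mmu = dN/dE$, giving the pointwise-in-$E$ comparison $dN/dE \geq dN'/dE - C\epsilon'$ on the surviving set.

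Third, I would assemble the pieces. Integrate the comparison over the surviving set $\Sigma(F',v')\cap\Sigma(F,v)\cap(-\infty,M]$; its complement inside $\Sigma(F',v')\cap(-\infty,M]$ has measure $<\kappa$ by hypothesis, and on it $dN'/dE$ is bounded (the periodic i.d.s.\ density is bounded on bands away from edges, or at worst integrable), so discarding it costs $o_\kappa(1)$. This yields
\[
\int_{-\infty}^M \frac{dN}{dE}\,dE \;\geq\; \int_{-\infty}^M \frac{dN'}{dE}\,dE - C\epsilon' - o_\kappa(1) \;>\; N'(M) - \epsilon - C\epsilon' - o_\kappa(1),
\]
using $(\epsilon,M)$-niceness of $(F',v')$. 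Combining with $N(M) < N'(M)+\epsilon/4$ from the first step and choosing $\epsilon'$ and $\kappa$ small enough in terms of $\epsilon$ (and of the period $T'$ and the edge structure of $\Sigma(F',v')$, which are fixed once $(F',v')$ is) gives $N(M)-\int_{-\infty}^M(dN/dE)\,dE<\epsilon$, i.e.\ $(F,v)$ is $(\epsilon,M)$-nice.

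The main obstacle is the second step: making rigorous that the \emph{invariant} section $u[F,v](E,x)$ behaves, along orbits, like an \emph{approximate} solution of the nearby periodic cocycle, with quantitative control that survives integration in both $E$ and $x$. The subtlety is uniformity — Lemma \ref{bla3003} gives, for each good $E$, a scale $N=N(E)$, and one must arrange that the $\kappa$-closeness operates on a time scale long enough to see that many periods for a set of $E$'s of nearly full measure simultaneously; this is handled by first truncating to a compact set of $E$ on which $N(E)$ is bounded (its complement being small by the a.e.\ statement and dominated convergence), and only then choosing $\kappa$. The ellipticity hypothesis $|\tr A[V'](E)|<2$ needed for Lemma \ref{bla3003} holds on $\Omega(F',v')$, which is $\Sigma(F',v')$ minus finitely many points, so it is available on all but a null set of the relevant $E$.
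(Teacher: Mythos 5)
Your proposal is correct and follows essentially the same route as the paper: approximate constancy of $N(M)$ under the perturbation, a pointwise lower bound $\frac{d}{dE}N \geq \frac{d}{dE}N' - \epsilon'$ obtained by combining Lemma \ref{bla3003} (periodic minimality of the orbit-average of $1/\Im u$) with Lemma \ref{bla3031} (which needs the vanishing Lyapunov exponent), and then integration over the part of the spectrum controlled by the hypothesis $|(\Sigma(F',v')\setminus\Sigma(F,v))\cap(-\infty,M]|<\kappa$; your same-energy comparison on $\Sigma(F',v')\cap\Sigma(F,v)$ versus the paper's nearby-energy comparison is only a bookkeeping difference. One small caveat: in your final displayed chain you should not invoke merely $\int_{-\infty}^M \frac{dN'}{dE}\,dE > N'(M)-\epsilon$ (which would only give a bound of the form $\epsilon$ plus errors), but rather the fact you already recorded that the periodic i.d.s.\ is absolutely continuous, so its defect is $0$ (or, for the non-periodic variant in the footnote, the strict gap in the niceness inequality), leaving room to absorb $\epsilon/4+C\epsilon'+o_\kappa(1)$ below $\epsilon$.
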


\begin{proof}

Let $N$, $N'$ be the integrated density of states for $(F,v)$,
$(F',v')$.  It is clear that $N(M)$ is close to $N'(M)$.
Using Lemma \ref {bla3003} and Lemma \ref {bla3031},
we see that for almost every
$E' \in \Sigma(F',v')$, for every
$\epsilon'>0$, there exists $\delta>0$ such that
for almost every $E \in \Sigma$ which is $\delta$-close to $E'$, if
$\kappa>0$ is sufficiently small, we have
\be
\frac {d} {dE} N(E)>\frac {d} {dE} N'(E')-\epsilon'.
\ee
Integrating over $\Sigma(F,v) \cap (-\infty,M]$, and using that the Lebesgue
measure of the spectrum is close, we get $\int_{-\infty}^M \frac {d} {dE}
N dE$ close to $\int_{-\infty}^M \frac {d} {dE'} N' dE'$.  The result
follows.
\end{proof}

\subsection{Slow deformation}

The following is the basic estimate of \cite {FK}.

\begin{lemma} \label {blainductive}

Let $J \subset \R$ be a closed interval,
and let $A:J \times \R/\Z \to \SL(2,\R)$ be a smooth function such that $|\tr
A(E,t)|<2$ for $(E,t) \in J \times \R/\Z$.  Let $B(E,t)=\B(A(E,t))$,
$\theta(E,t)=\Theta(A(E,t))$.
Then for every $m,k \in \N$, there exists $n(m) \in \N$ and
$C_{k,m}>0$
such that for every $n \geq n(m)$,
there exist smooth functions
$B_{(m,n)}:J \times \R/\Z \to \SL(2,\R)$, $\theta_{(m,n)}:J \times \R/\Z \to \R$
such that
\begin{enumerate}
\item $\|A_{(m,n)}-R_{\theta_{(m,n)}}\|_{C^k} \leq \frac {C_{k,m}} {n^m}$,
where
\be
A_{(m,n)}(E,t)=B_{(m,n)}(E,t+\frac {1} {n}) A(E,t) B_{(m,n)}(E,t)^{-1},
\ee
\item $\|B_{(m,n)}-B\|_{C^k} \leq \frac {C_{k,m}} {n}$,
\item $\|\theta_{(m,n)}-\theta\|_{C^k} \leq \frac {C_{k,m}} {n}$.
\end{enumerate}

\end{lemma}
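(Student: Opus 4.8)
\emph{Proof proposal.} The plan is to run a telescoping \emph{pointwise} diagonalization scheme. At each stage the current fibered matrix is a small multiplicative perturbation $Q\cdot R_\varphi$ of a rotation; I conjugate the cocycle $(t\mapsto t+\tfrac1n,\,A(E,\cdot))$ by the pointwise conjugacy $\B(Q R_\varphi)$ that diagonalizes $QR_\varphi$ exactly. Because the base map shifts $t$ by $\tfrac1n$, this conjugation does not produce an exact rotation: it reintroduces a defect $Q'(E,t)=\B(\cdots)(E,t+\tfrac1n)\,\B(\cdots)(E,t)^{-1}$. The key point of slowness is that this defect is $O(\tfrac1n\cdot\|\partial_t\B(\cdots)\|)$, hence smaller than the previous defect by a factor $\tfrac1n$; iterating $m$ times kills the defect down to size $1/n^m$. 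Note there is \emph{no} cohomological equation over the rational base $t\mapsto t+\tfrac1n$ to solve, so there are no small divisors; everything is pointwise. Throughout I use the standing compactness: since $|\tr A(E,t)|<2$ strictly on the compact $J\times\R/\Z$, the trace stays in some $(-2+\delta,2-\delta)$, so $\theta=\Theta\circ A$ stays a definite distance from $\tfrac12\Z$, and all matrices met in the scheme stay in a fixed compact set of elliptic matrices on which $\B,\Theta$ are smooth; also $\B(R_\psi)=\id$ (the fixed point of $R_\psi$ is $i$).

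\emph{Base step.} Put $B^{(0)}:=B=\B\circ A$, which is smooth, $\Z$-periodic in $t$ and $\SL(2,\R)$-valued, with $B(E,t)A(E,t)B(E,t)^{-1}=R_{\theta(E,t)}$. Conjugating the cocycle by $B$ gives
\[
A_0(E,t):=B(E,t+\tfrac1n)\,A(E,t)\,B(E,t)^{-1}=Q_0(E,t)\,R_{\theta(E,t)},\qquad Q_0(E,t):=B(E,t+\tfrac1n)B(E,t)^{-1},
\]
and from $Q_0(E,t)-\id=\bigl(\int_0^{1/n}\partial_sB(E,t+s)\,ds\bigr)B(E,t)^{-1}$ and smoothness of $B$ on the compact set one gets $\|Q_0-\id\|_{C^k}\le c_k/n$ for every $k$. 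Also set $\theta_0:=\theta$, $\widetilde B_0:=B$.

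\emph{Inductive step $j\to j+1$.} Suppose $\widetilde B_j$ is smooth, $\Z$-periodic, $\SL(2,\R)$-valued, with $A_j(E,t):=\widetilde B_j(E,t+\tfrac1n)A(E,t)\widetilde B_j(E,t)^{-1}=Q_j(E,t)R_{\theta_j(E,t)}$, $\|Q_j-\id\|_{C^k}\le c_{k,j}/n^{j+1}$ and $\|\theta_j-\theta\|_{C^k}\le c'_{k,j}/n$. For $n\ge n(m)$ the matrix $Q_jR_{\theta_j}$ lies in the fixed compact elliptic set, so $B^{(j+1)}(E,t):=\B\bigl(Q_j(E,t)R_{\theta_j(E,t)}\bigr)$ is well defined, smooth and $\Z$-periodic, with $B^{(j+1)}A_jB^{(j+1)\,-1}=R_{\theta_{j+1}}$ where $\theta_{j+1}:=\Theta(Q_jR_{\theta_j})$. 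Since $\B(R_{\theta_j})=\id$, $\Theta(R_{\theta_j})=\theta_j$, and $Q_jR_{\theta_j}-R_{\theta_j}=O(1/n^{j+1})$ in every $C^k$, smoothness of $\B,\Theta$ on the compact set (composed with the $C^k$ maps $Q_j,\theta_j$, whose norms are bounded uniformly in $n$) gives $\|B^{(j+1)}-\id\|_{C^k}\le C_{k,j}/n^{j+1}$ and $\|\theta_{j+1}-\theta_j\|_{C^k}\le C_{k,j}/n^{j+1}$. Conjugating $A_j$ by $B^{(j+1)}$ yields
\[
A_{j+1}(E,t)=B^{(j+1)}(E,t+\tfrac1n)A_j(E,t)B^{(j+1)}(E,t)^{-1}=Q_{j+1}(E,t)R_{\theta_{j+1}(E,t)},\qquad Q_{j+1}:=B^{(j+1)}(E,t+\tfrac1n)B^{(j+1)}(E,t)^{-1}.
\]
The gain is that, as in the base step, $\|Q_{j+1}-\id\|_{C^k}\le\tfrac1n\|\partial_tB^{(j+1)}\|_{C^k}\,\|B^{(j+1)-1}\|_{C^k}\cdot(\text{const})\le\tfrac1n\|B^{(j+1)}-\id\|_{C^{k+1}}\cdot(\text{bdd})\le c_{k,j+1}/n^{j+2}$. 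Setting $\widetilde B_{j+1}:=B^{(j+1)}\widetilde B_j$ (smooth, $\Z$-periodic, $\SL(2,\R)$-valued) closes the induction, with $A_{j+1}=\widetilde B_{j+1}(E,t+\tfrac1n)A(E,t)\widetilde B_{j+1}(E,t)^{-1}$; summing $\|\theta_{i+1}-\theta_i\|_{C^k}\le C_{k,i}/n^{i+1}$ over $i\le j$ (a geometric-type series dominated by its $i=0$ term, of size $O(1/n)$) gives $\|\theta_{j+1}-\theta\|_{C^k}=O(1/n)$, and $\widetilde B_{j+1}-B=\bigl(\prod_{i=1}^{j+1}B^{(i)}-\id\bigr)B$ has $C^k$-norm $O(1/n)$.

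\emph{Conclusion.} Run the induction to $j=m-1$ and put $B_{(m,n)}:=\widetilde B_{m-1}$, $\theta_{(m,n)}:=\theta_{m-1}$, so $A_{(m,n)}=A_{m-1}=Q_{m-1}R_{\theta_{m-1}}$; then $\|A_{(m,n)}-R_{\theta_{(m,n)}}\|_{C^k}=\|(Q_{m-1}-\id)R_{\theta_{m-1}}\|_{C^k}\le C_{k,m}/n^m$, which is (1), while (2) and (3) are the estimates just recorded. I expect the only real work to be the bookkeeping of $C^k$-norms through the scheme: each step consumes one derivative (controlling $Q_{j+1}$ in $C^k$ needs $Q_j$ in $C^{k+1}$), so over the $m$ steps one needs $A$ in $C^{k+m}$, which is harmless; one must also fix $n(m)$ large enough that the $O(1/n^{j+1})$-perturbations never push $Q_jR_{\theta_j}$ out of the fixed compact elliptic region, and check the tame estimates for $\B,\Theta$ (analytic, hence $C^\infty$ on that compact set) composed with $C^k$ maps. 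There is no analytic obstacle beyond this, precisely because the argument never leaves the pointwise regime — the slowness of the deformation (the shift being $\tfrac1n$) is exactly what makes each reintroduced defect smaller than the previous one by a factor $\tfrac1n$.
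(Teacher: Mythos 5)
Your proposal is correct and is essentially the paper's own argument: the paper's induction sets $B_{(m+1,n)}=\B(A_{(m,n)})\,B_{(m,n)}$ and $\theta_{(m+1,n)}=\Theta(A_{(m,n)})$ starting from $B_{(1,n)}=B$, $\theta_{(1,n)}=\theta$, which is exactly your scheme $\widetilde B_{j+1}=\B(A_j)\widetilde B_j$, $\theta_{j+1}=\Theta(A_j)$, with the gain per step coming from the $\frac1n$-shift acting on a conjugacy that is already $O(n^{-(j+1)})$-close to the identity. You have simply written out the $C^k$ bookkeeping (compactness of the elliptic region, composition estimates for $\B,\Theta$, loss of one derivative per step) that the paper leaves implicit.
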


\begin{proof}

Consider first the case $m=1$.  In this case, set $B_{(1,n)}=B$,
$\theta_{(1,n)}=\theta$, and the estimate is obvious.

Assume we have proved the result for $m \geq 1$.  Let
\be
B_{(m+1,n)}(E,t)=\B(A_{(m,n)}(E,t)) B_{(m,n)}(E,t),
\ee
\be
\theta_{(m+1,n)}(E,t)=\Theta(A_{(m,n)}(E,t)).
\ee
The estimates follow from the induction hypothesis.
\end{proof}

Under a monotonicity assumption, it yields a parameter
estimate:

\begin{lemma} \label {blaparameter}

Under the hypothesis of the previous lemma, assume moreover that
$\tilde \theta(E)=\int_{\R/\Z}\theta(E,t) dt$ satisfies
$\frac {d} {dE} \tilde \theta(E) \neq 0$ for every $E \in J$.
For $n \in \N$, let $A^{(n)}:J \times \R/\Z \to \SL(2,\R)$ be given by
\be
A^{(n)}(E,t)=A(E,t+\frac {n-1} {n}) A(E,t+\frac {n-2} {n}) \cdots
A(E,t+\frac {1} {n}) A(E,t).
\ee
Then there exist functions $\tilde \theta^{(n)}:J \to \R/\Z$ such that for
every measurable subset $Z \subset \R/\Z$,
\be
\lim_{n \to \infty} |\{E \in J,\, \tilde \theta^{(n)}(E) \in Z\}|=|Z| |J|,
\ee
with the following property.  For every $\delta>0$,
\be
\lim_{n \to \infty} \|\tr A^{(n)}(E,t)-2 \cos 2 \pi \tilde
\theta^{(n)}(E)\|_{C^0(J \times \R/\Z,\R)}=0,
\ee
\be
\lim_{n \to \infty} \sup_{|\sin 2 \pi \tilde \theta^{(n)}(E)|>\delta}
\|\Theta(A^{(n)}(E,\cdot))-
\tilde \theta^{(n)}(E)\|_{C^1(\R/\Z,\R)}=0,
\ee
\be
\lim_{n \to \infty} \sup_{|\sin 2 \pi \tilde \theta^{(n)}(E)|>\delta}
\|\u(A^{(n)}(E,\cdot))-\u(A(E,\cdot))\|_{C^1(\R/\Z,\C)}=0.
\ee

\end{lemma}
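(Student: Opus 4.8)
The plan is to use the normal form of Lemma~\ref{blainductive} to conjugate $A^{(n)}$ into a product of $n$ matrices that are super-polynomially close to rotations, and then exploit that a Riemann sum of a smooth periodic function approximates its integral with spectral accuracy. Concretely, I would fix $m=k=4$ (larger values work equally well), apply Lemma~\ref{blainductive}, and keep its notation $B_{(m,n)},\theta_{(m,n)},A_{(m,n)}$. From the conjugacy $A(E,t)=B_{(m,n)}(E,t+\tfrac1n)^{-1}A_{(m,n)}(E,t)B_{(m,n)}(E,t)$, telescoping the product that defines $A^{(n)}$ and using $B_{(m,n)}(E,t+1)=B_{(m,n)}(E,t)$ one obtains
\be \label{eq:plan-tel}
A^{(n)}(E,t)=B_{(m,n)}(E,t)^{-1}\,P^{(n)}(E,t)\,B_{(m,n)}(E,t),\qquad P^{(n)}(E,t)=\prod_{j=n-1}^{0}A_{(m,n)}\Bigl(E,t+\tfrac jn\Bigr).
\ee
I would then set $\sigma^{(n)}(E,t)=\sum_{j=0}^{n-1}\theta_{(m,n)}(E,t+\tfrac jn)$ and \emph{define} the function in the statement by
\be
\tilde\theta^{(n)}(E)=\int_{\R/\Z}\sigma^{(n)}(E,t)\,dt=n\int_{\R/\Z}\theta_{(m,n)}(E,s)\,ds\in\R/\Z .
\ee

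The heart of the proof is to show that $A^{(n)}(E,\cdot)$ is $C^1$-close, uniformly in $E\in J$, to the $B_{(m,n)}(E,\cdot)$-conjugate of the \emph{constant} rotation $R_{\tilde\theta^{(n)}(E)}$. First I would compare $P^{(n)}$ with $\prod_{j=n-1}^{0}R_{\theta_{(m,n)}(E,\,t+j/n)}=R_{\sigma^{(n)}(E,t)}$ (the product of rotations being the rotation by the sum of the angles). Each factor obeys $\|A_{(m,n)}(E,\cdot)-R_{\theta_{(m,n)}(E,\cdot)}\|_{C^k}\le C_{k,m}n^{-m}$, the rotations have unit norm, and $\|\theta_{(m,n)}\|_{C^k},\|B_{(m,n)}\|_{C^k},\|B_{(m,n)}^{-1}\|_{C^k}$ are bounded uniformly in $n$ (they converge, at rate $n^{-1}$, to $\theta,B,B^{-1}$). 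A telescoping estimate for a product of $n$ such factors — in which each $t$-differentiation of a partial product costs an extra factor $O(n)$ — then gives $\|P^{(n)}(E,\cdot)-R_{\sigma^{(n)}(E,\cdot)}\|_{C^1(\R/\Z)}=O(n^{2-m})$, which tends to $0$ since $m=4$. Next, since $\partial_s\theta_{(m,n)}(E,\cdot)$ is periodic with zero mean and uniformly bounded $C^{k-1}$ norm, the $n$-point trapezoidal rule bounds $\|\partial_t\sigma^{(n)}(E,\cdot)\|_{C^0}$ by $n\cdot O(n^{1-k})=O(n^{2-k})\to0$; hence $\sigma^{(n)}(E,\cdot)$ is $C^1$-close to its mean $\tilde\theta^{(n)}(E)$ and $\|R_{\sigma^{(n)}(E,\cdot)}-R_{\tilde\theta^{(n)}(E)}\|_{C^1(\R/\Z)}=O(n^{2-k})$. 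Inserting these into \eqref{eq:plan-tel} and conjugating by the $C^1$-bounded family $B_{(m,n)}(E,\cdot)$,
\be \label{eq:plan-form}
\sup_{E\in J}\bigl\|A^{(n)}(E,\cdot)-B_{(m,n)}(E,\cdot)^{-1}R_{\tilde\theta^{(n)}(E)}B_{(m,n)}(E,\cdot)\bigr\|_{C^1(\R/\Z)}\longrightarrow 0 .
\ee

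From \eqref{eq:plan-form} the three limits follow quickly. As the trace is conjugation invariant, \eqref{eq:plan-tel} gives $\tr A^{(n)}(E,t)=\tr P^{(n)}(E,t)=2\cos 2\pi\tilde\theta^{(n)}(E)+o(1)$ uniformly in $(E,t)$. On the set where $|\sin 2\pi\tilde\theta^{(n)}(E)|>\delta$, the rotation $R_{\tilde\theta^{(n)}(E)}$, hence its $B_{(m,n)}(E,t)$-conjugate, stays a definite hyperbolic distance from $\pm\id$, so for $n$ large $A^{(n)}(E,\cdot)$ lies in a region where $\Theta$ and $\u$ are analytic with bounded $C^1$ norm; since $\Theta$ is conjugation invariant with $\Theta(R_c)=c$, \eqref{eq:plan-form} yields $\|\Theta(A^{(n)}(E,\cdot))-\tilde\theta^{(n)}(E)\|_{C^1(\R/\Z)}=o(1)$, and since $\u$ transforms by the $\SL(2,\R)$-action under conjugation with $\u(R_c)=i$, \eqref{eq:plan-form} gives $\u(A^{(n)}(E,t))=B_{(m,n)}(E,t)^{-1}\cdot i+o_{C^1}(1)$, which together with $B_{(m,n)}(E,\cdot)\to B(E,\cdot)=\B(A(E,\cdot))$ in $C^1$ and $\B(A(E,t))^{-1}\cdot i=\u(A(E,t))$ gives $\|\u(A^{(n)}(E,\cdot))-\u(A(E,\cdot))\|_{C^1(\R/\Z)}=o(1)$. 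For the equidistribution, write $\tilde\theta^{(n)}(E)=n\tilde\theta(E)+r_n(E)$ in $\R/\Z$, where $r_n(E)=n\bigl(\int_{\R/\Z}\theta_{(m,n)}(E,s)\,ds-\tilde\theta(E)\bigr)$ and $\|r_n\|_{C^2(J)}\le C_{2,m}$ by Lemma~\ref{blainductive}(3). Since $\tilde\theta'$ has constant sign on the compact interval $J$, say $\tilde\theta'\ge c_0>0$, the real lift $\Phi_n:=n\tilde\theta+r_n$ is, for $n$ large, an orientation-preserving $C^1$ diffeomorphism of $J$ onto an interval of length $\ge\tfrac12 nc_0|J|$, with $\Phi_n'\ge\tfrac12 nc_0$ and $|\Phi_n''|\le 2n\|\tilde\theta\|_{C^2}$, so the relative oscillation of $1/\Phi_n'$ over any unit subinterval of $\Phi_n(J)$ is $O(1/n)$. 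Changing variables $y=\Phi_n(E)$ and writing $\widehat Z=\bigcup_{l\in\Z}(Z+l)$,
\be
\bigl|\{E\in J:\ \tilde\theta^{(n)}(E)\in Z\}\bigr|=\int_{\widehat Z\,\cap\,\Phi_n(J)}\frac{dy}{\Phi_n'(\Phi_n^{-1}(y))},
\ee
and this equals $|Z|$ times a Riemann sum over the unit intervals tiling $\Phi_n(J)$ for $\int_{\Phi_n(J)}(\Phi_n'\circ\Phi_n^{-1})^{-1}\,dy=|J|$, up to two incomplete boundary periods of total mass $O(1/n)$; hence it tends to $|Z|\,|J|$.

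The step I expect to be the real obstacle is the $C^1$-in-$t$ bookkeeping in the second paragraph: each $t$-differentiation applied to an $n$-fold partial product produces an extra factor $n$, so the per-factor error $O(n^{-m})$ furnished by Lemma~\ref{blainductive} must be beaten by choosing $m$ large enough (here $m=4$), and one must likewise verify that the smallness of $\partial_t\sigma^{(n)}$ genuinely stems from the higher-order cancellation in the periodic Riemann sum, not just from $O(1)$ bounds. Once \eqref{eq:plan-form} is in hand, the deductions of the three limits and the equidistribution argument are entirely routine.
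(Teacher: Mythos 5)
Your proposal is correct and follows essentially the same route as the paper: conjugate via the normal form of Lemma \ref{blainductive}, compare the resulting product with the product of rotations (your telescoping bound is just a streamlined version of the paper's expansion over positions of error factors), use the periodic Riemann-sum/Fourier cancellation to replace $\sigma^{(n)}$ by its mean $\tilde\theta^{(n)}$, and deduce the three limits and the equidistribution from monotonicity of $\tilde\theta$. The only differences are cosmetic ($m=4$ instead of $m\geq 3$, and a more detailed equidistribution argument than the paper bothers to write).
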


\begin{proof}

Let $B_{(m,n)}$, $A_{(m,n)}$ and $\theta_{(m,n)}$
be as in Lemma \ref {blainductive}, and let
\be
A^{(m,n)}(E,t)=B_{(m,n)}(E,t) A^{(n)}(E,t) B_{(m,n)}(E,t)^{-1}.
\ee
We have
\be
A^{(m,n)}(E,t)=A_{(m,n)}(E,t+\frac {n-1} {n}) \cdots A_{(m,n)}(E,t).
\ee
Let $\theta^{(m,n)}(E,t)=\sum_{j=0}^{n-1} \theta_{(m,n)}(E,t+\frac {j}
{n})$.  Then
\be
A^{(m,n)}-R_{\theta^{(m,n)}}=\sum_{j=1}^n H_{(m,n,j)},
\ee
with
\be
H_{(m,n,j)}=\sum H_{(m,n,\underline i)}
\ee
where the sum runs over all non-empty
sequences $\underline i=(i_1,...,i_j)$
satisfying $0 \leq i_1<...<i_j<n$, and $H_{(m,n,\underline i)}$ is a
product $T_{n-1} \cdots T_0$ with $T_l(E,t)=R_{\theta_{(m,n)}(E,t+\frac {l}
{n})}$ if $l \neq i_r$
for every $1 \leq r \leq j$,
and $T_l(E,t)=A_{(m,n)}(E,t+\frac {l} {n})-R_{\theta_{(m,n)}(E,t+\frac {l}
{n})}$ if $l=i_r$ for some $1 \leq r \leq j$.  Then
\be
\|H_{(m,n,\underline i)}\|_{C^0} \leq
\|A_{(m,n)}-R_{\theta_{(m,n)}}\|_{C^0}^j,
\ee
\begin{align}
\|D H_{(m,n,\underline i)}(E,t)\|_{C^0} \leq
&j \|D (A_{(m,n)}-R_{\theta_{(m,n)}})\|_{C^0}
\|A_{(m,n)}-R_{\theta_{(m,n)}}\|_{C^0}^{j-1}\\
\nonumber
&
+(n-j) \|D R_{\theta_{(m,n)}}\|_{C^0}
\|A_{(m,n)}-R_{\theta_{(m,n)}}\|_{C^0}^j,
\end{align}
where we write $D$ for the total derivative.
Thus
\be
\|H_{(m,n,\underline i)}\|_{C^1} \leq \frac {C_m^j} {n^{m j-1}},
\ee
\be
\|A^{(m,n)}-R_{\theta^{(m,n)}}\|_{C^1} \leq \sum_{j=1}^n \frac {C_m^j}
{n^{(m-1) j-1}},
\ee
so that for $m \geq 3$ we have
\be
\lim_{n \to \infty} \|A^{(m,n)}-R_{\theta^{(m,n)}}\|_{C^1}=0.
\ee
Note that
\be
\theta^{(m,n)}(E,t)=n \sum_{k \in n \Z} e^{2 \pi i k t}
\int_{\R/\Z} \theta_{(m,n)}(E,t) e^{-2 \pi i k t} dt.
\ee
Let $\tilde \theta_{(m,n)}(E)=\int_{\R/\Z} \theta_{(m,n)}(E,t) dt$.
Then for $m \geq 3$, using that $\theta_{(m,n)}$ is uniformly $C^3$ for
fixed $m$ and $n \to \infty$,
\be
\lim_{n \to \infty}
\sup_{E \in J} \|\theta^{(m,n)}(E,\cdot)-n \tilde
\theta_{(m,n)}(E)\|_{C^1(\R/\Z,\R)}=0.
\ee
Since
\be
\lim_{n \to \infty}
\|\tilde \theta_{(m,n)}(E)-\tilde \theta(E)\|_{C^1}=0,
\ee
and the derivative of $\tilde \theta(E)$ is non-vanishing,
it follows that for $m \geq 3$ and
each measurable set $Z \subset \R/\Z$,
\be
\lim_{n \to \infty} |\{E \in J,\, \tilde \theta^{(m,n)}(E) \in Z\}|=
|Z| |J|.
\ee

Fix $m \geq 3$ and let $\tilde \theta^{(n)}=\tilde \theta^{(m,n)}$.
Then for $n$ large we get
\be
\lim_{n \to \infty} \|\tr A^{(m,n)}(E,t)-2 \cos 2 \pi \tilde
\theta^{(n)}(E)\|_{C^0(J \times \R/\Z,\R)}=0,
\ee
\be
\lim_{n \to \infty} \sup_{|\sin 2 \pi \tilde \theta^{(n)}(E)|>\delta}
\|\u(A^{(m,n)}(E,\cdot))-i\|_{C^1(\R/\Z,\C)}=0,
\ee
\be
\lim_{n \to \infty} \sup_{|\sin 2 \pi \tilde \theta^{(n)}(E)|>\delta}
\|\Theta(A^{(m,n)}(E,\cdot))-
\tilde \theta^{(n)}\|_{C^1(\R/\Z,\SL(2,\R))}=0.
\ee
Notice that $\tr A^{(m,n)}=\tr A^{(n)}$ and
$\Theta(A^{(m,n)})=\Theta(A^{(n)})$.  Moreover,
$\u(A^{(n)}(E,t))=
B_{(m,n)}(E,t)^{-1} \u(A^{(m,n)}(E,t))$.  Since
\be
\lim_{n \to \infty} \|B_{(m,n)}-B\|_{C^1}=0,
\ee
and $B(E,t) \cdot \u(A(E,t))=i$, we conclude that
\be
\lim_{n \to \infty} \sup_{|\sin 2 \pi \tilde \theta^{(n)}(E)|>\delta}
\|\u(A^{(n)}(E,\cdot))-\u(A(E,\cdot))\|_{C^1(\R/\Z,\C)}=0.
\ee
\end{proof}

\comm{
Thus, if $m \geq k+2$, we get

By the Leibniz formula,
\be
B_{(m,n)}(E,t) A_{(n)}(E,t) B_{(m,n)}(E,t)^{-1}-
R_{\theta'_{(n)}(E,t)\|_{C^k} \leq \frac {C'_{k,m}} {n^{m-1}}.
\ee

Let $J=[E_0-\delta_0,E_0+\delta_0]$ sufficiently small so that $|\tr
A(E,t)|<2$ for every $E \in J$, $t \in \T$.
We can write $A(E,t)=B(E,t) R_{\phi(E,t)} B(E,t)^{-1}$ for
$|E-E_0|<\delta_0$.
where $\phi:J \times \R/\Z \to \R$ and $B:J \times
\R/\Z \to \SL(2,\R)$ are smooth functions.  Let
$A^{(1)}(E,t)=B(E,t+1/n)^{-1} A(E,t) B(E,t)$.
Then $\|A^{(1)}-R_\phi\|_{C^j} \leq
\frac {C_j} {n}$.  Thus for $n$ large we can write
$A^{(1)}(E,t)=B^{(1)}(E,t) R_{\phi^{(1)}(E,t)} B^{(1)}(E,t)^{-1}$, where
$\|\phi^{(1)}-\phi\|_{C^j} \leq
\frac {C_{j,1}} {n}$, $\|B^{(1)}(t)-\id\|_{C^j} \leq \frac {C_{j,1}} {n}$.

Iterating, we get for every $l \geq 2$, for every $n$ sufficiently large,
and for every $2 \leq m \leq l$, functions $A^{(m)}$, $B^{(m)}$,
$\phi^{(m)}$, such that
\be
A^{(m)}(E,t)=B^{(m-1)}(E,t) R_{\phi^{(m-1)}(E,t)}B^{(m-1)}(E,t),
\ee
\be
A^{(m)}(E,t)=B^{(m-1)}(E,t+1/n) R_{\phi^{(m-1)}(E,t)}B^{(m-1)}(E,t),
\ee
\be
\|\phi^{(m)}-\phi^{(m-1)}\|_{C^j} \leq
\frac {C_{j,m}} {n^m},
\ee
\be
\|B^{(m)}(t)-\id\|_{C^j} \leq \frac {C_{j,m}} {n^m}.
\ee
}
}

\section{Continuum case: unbounded eigenfunctions}

The potential we will produce will be a suitable projective limit of
periodic potentials.
The actual work we need to do is to inductively construct
suitable periodic potentials.




\subsection{Construction of periodic potentials}

Let $V:\R/T \Z \to \R$ be a continuous function with $V(0)=0$.
For $\delta>0$, $N \in \N$, $n \in \N$, the $(\delta,N,n)$-padding of
$V$ is the continuous function $V':\R/T' \Z \to
\R$, $T'=2 N n T+\delta \sum_{j=0}^{2n-1} \sin^{2 N} \pi \frac {j} {2 n}$,
given by the following conditions:
\begin{enumerate}
\item $V'(t)=V(t-a_j)$, $a_j \leq t \leq a_j+N T$, $0 \leq j \leq 2n-1$,
\item $V'(t)=0$, $a_j+N T \leq t \leq a_{j+1}$, $0 \leq j \leq 2n-1$,
\item $a_0=0$, $a_{j+1}=a_j+N T+\delta \sin^{2N} \pi \frac {j} {2 n}$.
\end{enumerate}
In other words, we repeat the periodic potential $N n$ times, but
with an extra ``padding'' with a small
interval of zeroes every $N$ repetitions.  The length of those intervals is
slowly modulated, but it is always small (at most $\delta$).

The goal of this section is to establish the following estimate:

\begin{lemma} \label {bla22}

Let $V^{(0)}:\R/T^{(0)} \Z \to \R$ be a smooth non-constant,
non-negative function with $V^{(0)}=0$ near $0$.  Then for every
$M,\xi>0$, there exists $C>0$ such that for every $C_0>0$,
for every $\delta>0$ sufficiently small,
there exist $0<P<\xi \delta^{-1}$,
and sequences $N^{(j)},n^{(j)}$, $1 \leq j \leq P$, such that if we define
$V^{(j)}:\R/T^{(j)} \Z \to \R$, $1 \leq j \leq P$ so that $V^{(j)}$ is
obtained by $(\delta,N^{(j)},n^{(j)})$-padding of $V^{(j-1)}$,
then there exists a compact subset $\Gamma \subset (0,M] \cap
\Omega(V^{(P)})$ such that
$|\Sigma(V^{(0)}) \cap (-\infty,M]
\setminus \Gamma|<\xi$ and for every $E \in \Gamma$ we have
\be
\sup_t d(u[V^{(P)}](E,t),i) \geq C_0,
\ee
\be \label {blaTP}
\frac {1} {T^{(P)}} \int_0^{T^{(P)}} d(u[V^{(P)}](E,t),i) dt \leq C.
\ee

\end{lemma}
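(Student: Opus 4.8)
\emph{Plan.} I would prove Lemma \ref{bla22} by the inductive construction suggested by its statement: having built the periodic potential $V^{(j-1)}$, I choose the padding parameters $N^{(j)},n^{(j)}$ so as to control the transfer cocycle of its $(\delta,N^{(j)},n^{(j)})$-padding $V^{(j)}$ on a large (in Lebesgue measure) subset of $\Omega(V^{(j-1)})$, and then iterate $P\approx\xi\delta^{-1}$ times. The passage $V^{(j-1)}\rightsquigarrow V^{(j)}$ is exactly of the type governed by Lemma \ref{blaparameter}: over one period of $V^{(j)}$ the monodromy factors as $A^{(2n^{(j)})}(E,0)$ built from the \emph{block matrix}
$A_V^{(j)}(E,t)=G\big(E,\delta\sin^{2N^{(j)}}\pi t\big)\,A[V^{(j-1)}](E)^{N^{(j)}}$,
where $G(E,\ell)\in\SL(2,\R)$ is the free transfer matrix across a gap of length $\ell$, and the other blocks are its cyclic shifts. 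I would apply Lemma \ref{blaparameter} on a finite union of intervals $J_j\subset\Omega(V^{(j-1)})$ obtained by deleting neighbourhoods of $\partial\Omega(V^{(j-1)})$, the energies where $|\tr A_V^{(j)}(E,t)|\ge 2$ for some $t$ (these cluster at the ``half--integer resonances'' $N^{(j)}\Theta(A[V^{(j-1)}](E))\in\tfrac12\Z$), and the energies where the monotonicity hypothesis $\tfrac{d}{dE}\!\int_{\R/\Z}\Theta(A_V^{(j)}(E,t))\,dt\ne 0$ fails; all of these can be kept with measure well inside the budget $\xi$ (shrinking neighbourhoods and using Lemma \ref{bla7}). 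On $J_j$, with $n^{(j)}$ large, Lemma \ref{blaparameter} then gives that, off a further set (those $E$ with $|\sin 2\pi\tilde\theta^{(2n^{(j)})}(E)|\le\delta'$, of measure $\lesssim\delta'|J_j|$, which is where $V^{(j)}$ has its new band edges), the monodromy of $V^{(j)}$ is $C^1$--close to $R_{\tilde\theta^{(2n^{(j)})}(E)}$ and $\u(A^{(2n^{(j)})}(E,t))$ is $C^1$--close to $\u(A_V^{(j)}(E,t))$.

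The core is then the analysis of $\u(A_V^{(j)}(E,t))$ and of its propagation along a full period of $V^{(j)}$. Conjugating $A[V^{(j-1)}](E)^{N^{(j)}}$ to the rotation $R_{N^{(j)}\Theta(A[V^{(j-1)}](E))}$ by $M(E)=\B(A[V^{(j-1)}](E))^{-1}$ (so $M(E)\cdot i=u[V^{(j-1)}](E)$), one checks that the fixed point of $A_V^{(j)}(E,t)$ interpolates, as the gap length $\delta\sin^{2N^{(j)}}\pi t$ runs from $0$ to $\delta$, between $u[V^{(j-1)}](E)$ (off resonance, or for a negligible gap) and the free fixed point $i\sqrt E$, the transition near the $k$--th resonance being governed by the competition of the two rotation angles $\sqrt E\,\delta\sin^{2N^{(j)}}\pi t$ and $2\pi\,\mathrm{dist}(N^{(j)}\Theta(A[V^{(j-1)}](E)),\Z)$. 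Because $\sin^{2N^{(j)}}$ is sharply peaked, for each such $E$ the excursion is switched on and off within a window of relative length $\sim (N^{(j)})^{-1/2}$ of the period, and inside a block the $N^{(j)}$ copies of $V^{(j-1)}$ act by the isometries $A[V^{(j-1)}](E)^s$, which only transport the curve $u[V^{(j-1)}](E,\cdot)$ along itself. This yields schematic recursions, valid for $E$ in the surviving set $\Gamma_j$:
\be
\sup_t d\big(u[V^{(j)}](E,t),i\big)\ \gtrsim\ \sup_t d\big(u[V^{(j-1)}](E,t),i\big)+\rho_j(E),
\ee
\be
\frac{1}{T^{(j)}}\int_0^{T^{(j)}}\! d\big(u[V^{(j)}](E,t),i\big)\,dt\ \lesssim\ \frac{1}{T^{(j-1)}}\int_0^{T^{(j-1)}}\! d\big(u[V^{(j-1)}](E,t),i\big)\,dt+\frac{C_j}{\sqrt{N^{(j)}}},
\ee
where $C_j$ depends only on $V^{(j-1)}$ (hence, ultimately, on $M,\xi$ and on earlier choices) and $\rho_j(E)$ measures the excursion introduced at stage $j$; Lemmas \ref{bla3003} and \ref{bla3031} are used to push the density--of--states/average control from $V^{(j-1)}$ to $V^{(j)}$, and Lemma \ref{bla3} (applied to $V^{(P)}$) converts a lower bound on $\sup_t d(u,i)$ into the form in the statement.

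The main obstacle is to reconcile the two recursions: make the first one reach the prescribed $C_0$ on \emph{all but $\xi$} of $\Sigma(V^{(0)})\cap(-\infty,M]$, while keeping the second bounded by a constant $C=C(M,\xi,V^{(0)})$ that is \emph{independent of $C_0$}. The tension is the one highlighted in the introduction: one stage produces a definite excursion only on an $O(\delta)$--proportion of the spectrum and each excursion is only of size $O(1)$ on a well--chosen portion of the resonance set, so a single stage cannot do the job; but since $\delta$ is taken small \emph{after} $C_0$, one may iterate $P\approx\xi\delta^{-1}$ times, cycling $N^{(j)},n^{(j)}$ so that successive stages activate different parts of the spectrum, and arranging that the oscillations introduced at the (very widely separated) time scales $T^{(j)}$ line up in phase — the coherence that turns a non--summable but otherwise merely bounded sequence of oscillations into a genuine excursion of size $\ge C_0$. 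Concretely I would, at stage $j$, pick $N^{(j)}$ large enough that $C_j/\sqrt{N^{(j)}}<C2^{-j}$ and that the deleted (resonance and band--edge) sets have measure $<\xi 2^{-j}$, then $n^{(j)}$ large enough that Lemma \ref{blaparameter}, together with Lemmas \ref{bla3014new}–\ref{bla3014} (semicontinuity of the spectrum and the measure bound $|\Sigma(V^{(j-1)})\setminus\Sigma(V^{(j)})|$ small), makes all the above estimates pass to $V^{(j)}$ with the claimed uniformity; after $P$ stages one sets $\Gamma=\Gamma_P$. I expect verifying that the stage--$j$ excursions can indeed be made to add up coherently for a set of $E$ of measure $>|\Sigma(V^{(0)})\cap(-\infty,M]|-\xi$, \emph{without} the feedback (growth of $\|M(E)\|$) running away and destroying either the average bound \eqref{blaTP} or the spectrum, to be the delicate point — this is where the precise modulation $\delta\sin^{2N}\pi\frac{j}{2n}$ and the slowness ($n^{(j)}\to\infty$) are essential.
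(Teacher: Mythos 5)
Your skeleton matches the paper's: iterate the padding $P\sim\xi\delta^{-1}$ times, use Lemma \ref{blaparameter} to reduce each step to the block matrix $G(E,t)\,A[V^{(j-1)}](E)^{N}$, discard band edges and resonant energies of small total measure, and control $\sup_t d(u,i)$ and its period average separately (the average moves by an arbitrarily small $\kappa$ per step, so (\ref{blaTP}) with $C$ independent of $C_0$ is indeed not the hard part). But the heart of the lemma --- why $\sup_t d(u[V^{(P)}](E,t),i)$ reaches an arbitrary $C_0$ for all but measure $\xi$ of the energies --- is exactly the point you leave open, and the mechanism you propose for it (choosing $N^{(j)},n^{(j)}$ so that the stage-$j$ excursions ``line up in phase'' across the scales $T^{(j)}$, activating different parts of the spectrum at different stages) is not what the paper does and gives no way to close the argument: the resonance condition at stage $j$ (namely $2N^{(j)}\theta^{(j)}(E)$ close to $\Z$) depends on $E$ and on the entire history, and cannot be scheduled deterministically.

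The paper's actual mechanism is probabilistic and requires no coherence at all. Two facts from the one-step lemma are decisive: (i) on the surviving set the sup is monotone, $\sup_t d(u'(E,t),i)\ge\sup_t d(u(E,t),i)$, so increments never cancel; (ii) the increment has a heavy tail in the energy variable: for every $\gamma\in(C'\delta,1/4)$ a gain of at least $C'^{-1}\delta/\gamma$ occurs with probability at least $\gamma/3$, uniformly over level sets of the current sup (this uses the equidistribution of $\tilde\theta^{(n)}(E)$ from Lemma \ref{blaparameter}, made uniform by the covering argument of Lemma \ref{bla21}). Consequently the per-step expected gain is of order $\delta\ln(1/\delta)$ --- a gain $\sim 2^k\delta$ with probability $\sim 2^{-k}$ for each dyadic $k$ with $C''<k<-C''-\ln\delta$ --- and after stochastic domination by i.i.d.\ variables $W_j$ and the Law of Large Numbers over $P\sim\xi\delta^{-1}$ steps the total gain is of order $\xi\ln(1/\delta)$, which exceeds any prescribed $C_0$ once $\delta$ is small; this is precisely why $\delta$ is chosen after $C_0$ in the statement. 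Your description keeps only ``size $O(1)$ on an $O(\delta)$-proportion'' of the tail, concludes (correctly, for that truncation) that no single scheme of independent stages suffices, and then appeals to an unproven phase-coherence step; without the monotonicity, the full dyadic tail, and the LLN comparison, the growth to $C_0$ does not follow, so the proposal has a genuine gap at its central step.
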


We will need a few preliminary results.

\begin{lemma}

For every $C>0$, $M>0$, there exist $C'>0$ and $\delta_0>0$
with the following property.
Let $V:\R/T \Z \to \R$ be a smooth function with $V(t)=0$ near $0$,\footnote
{This neighborhood can be arbitrarily small, but this will influence the
constants below that depend on $v$.}
and let $A(\cdot)=A[V](\cdot)$ and $u(\cdot)=u[V](\cdot)$.
Let $E_0 \in \Omega(V) \cap [M^{-1},M]$.
Assume that $C^{-1}<d(u(E_0),E_0^{1/2} i)<C$.
Then there exists $\epsilon_0>0$ such that for every
$0<\epsilon<\epsilon_0$, for every $\kappa>0$,
for every $0<\delta<\delta_0$, for every $N$ sufficiently
large, for every $n$ sufficiently large, letting
$V'$ be the $(\delta,N,n)$-padding of $v$, $A'(\cdot)=A[V'](\cdot)$,
$u'(\cdot)=u[V'](\cdot)$, we have the following.  
There exists a compact set
$\Lambda \subset \Omega(V') \cap [E_0-\epsilon,E_0+\epsilon]$
such that
\begin{enumerate}
\item $|\Lambda|>2 (1-C' \delta) \epsilon$,
\item For $E \in \Lambda$, $d(u'(E),u(E))<\kappa$ and
$C^{-1}<d(u'(E),E^{1/2} i)<C$,
\item For $E \in \Lambda$,
\be
\sup_{t \in [0,T']} d(u'(E,t),i) \geq \sup_{t \in
[0,T]} d(u(E,t),i),
\ee
\item For any $C' \delta<\gamma<1/4$,
there exists a compact set $\Lambda' \subset
\Lambda$ with $|\Lambda'|>\gamma \epsilon$
such that for $E \in \Lambda'$,
\be
\sup_{t \in [0,T']} d(u'(E,t),i) \geq
\sup_{t \in [0,T]} d(u(E,t),i)+C'^{-1} \frac {\delta} {\gamma}.
\ee
\item For $E \in \Lambda$,
\be
\left |\frac {1} {T'} \int_0^{T'} d(u'(E,t),i) dt-
\frac {1} {T} \int_0^T d(u(E,t),i) dt \right |<\kappa,
\ee
\end{enumerate}

\end{lemma}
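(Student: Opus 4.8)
The plan is to realise the one‑period transfer matrix $A'(E)=A[V'](E,0,T')$ as a slow deformation in the sense of Lemma~\ref{blaparameter}. Let $R(E,s)\in\SL(2,\R)$ be the transfer matrix of the zero potential over an interval of length $s$ (an elliptic matrix fixing $E^{1/2}i$ in $\H$, with $\|R(E,s)-\id\|=O(\delta)$ for $0\le s\le\delta$), and consider, for $\tau\in\R/\Z$, the family $\hat A(E,\tau)=R(E,\delta\sin^{2N}\pi\tau)\,A(E)^N$, where $A(E)^N=A[V](E)^N=\B(A(E))^{-1}R_{N\Theta(A(E))}\B(A(E))$ is bounded uniformly in $N$ on the compact set $\{d(\,\cdot\,,E^{1/2}i)\le C,\ E\le M\}$. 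Since the $j$‑th block of $V'$ carries a shifted copy of $V$ repeated $N$ times and is followed by a gap of zeros of length $s_j=\delta\sin^{2N}\pi\frac{j}{2n}$, one reads off directly that $A'(E)=\hat A^{(2n)}(E,0)$ in the notation of Lemma~\ref{blaparameter}. Moreover, for each $j$ the usual conjugacy $A[V'](E,a_j)=P_j\,A'(E)\,P_j^{-1}$ with $P_j=A[V'](E,0,a_j)$ identifies $w_j:=u'(E,a_j)=\u(A[V'](E,a_j))$ with $\u(\hat A^{(2n)}(E,\tfrac{j}{2n}))$, and since $V'$ agrees with a shift of $V$ on $[a_j,a_j+NT]$ one has the exact relation $u'(E,a_j+\sigma)=A(E,0,\sigma)\cdot w_j$ for $0\le\sigma\le NT$.

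Before invoking Lemma~\ref{blaparameter} I must arrange ellipticity of $\hat A(E,\cdot)$ on $J=[E_0-\epsilon,E_0+\epsilon]$. Writing $x$ for the distance from $N\Theta(A(E))$ to $\tfrac{1}{2}\Z$, a Taylor expansion in $s$ and $x$ gives $\tr\big(R(E,s)A(E)^N\big)=\pm2\cos 2\pi x+O(sx)+O(s^2)$ near a resonance $x=0$: the first‑order correction to the trace carries a factor $x$, so the relevant balance is $x^2$ versus $sx$ rather than $x^2$ versus $\delta$, and $\hat A(E,\cdot)$ fails to be uniformly (margin $\gtrsim\delta^2$) elliptic only for $E$ with $x\lesssim\delta$. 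As $\Theta(A(E))$ is strictly increasing in $E$ (Lemma~\ref{bla7}), for $N$ large $N\Theta(A(E))$ equidistributes mod $1$ on $J$, so the bad set $J_{\mathrm{bad}}$ has measure $O(\delta\epsilon)$; this is the source of the $O(\delta)$ loss in item~(1). On each component of $J\setminus J_{\mathrm{bad}}$ the family $\hat A$ is smooth, uniformly elliptic, has uniformly bounded $C^k$ norms, and satisfies $\frac{d}{dE}\int_{\R/\Z}\Theta(\hat A(E,\tau))\,d\tau>0$ (again by Lemma~\ref{bla7}, applied to the potential equal to $V$ on $[0,NT]$ and $0$ on $[NT,NT+\delta\sin^{2N}\pi\tau]$), so Lemma~\ref{blaparameter} applies componentwise with uniform constants. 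It produces $\tilde\theta^{(2n)}$, equidistributing mod $1$, with $\tr A'(E)=2\cos 2\pi\tilde\theta^{(2n)}(E)+o(1)$, and, on $\{|\sin 2\pi\tilde\theta^{(2n)}(E)|>\delta\}$, also $w_j=\u(\hat A^{(2n)}(E,\tfrac{j}{2n}))\to\u(\hat A(E,\tfrac{j}{2n}))=\u(R(E,s_j)A(E)^N)=:\u_j(E)$, uniformly in $j$ and $E$ as $n\to\infty$. I take $\Lambda$ to be a compact, almost full‑measure subset of $\big(J\setminus J_{\mathrm{bad}}\big)\cap\{|\sin 2\pi\tilde\theta^{(2n)}|>\delta\}$; then $\Lambda\subset\Omega(V')$ and $|\Lambda|>2(1-C'\delta)\epsilon$ (item~(1)), while $s_0=0$ gives $u'(E)=w_0\to\u(A(E)^N)=\u(A(E))=u(E)$, giving item~(2) once $n$ is large enough that this error lies below $\kappa$ and below the (fixed, since $\epsilon_0$ is small) slack in $C^{-1}<d(u(E_0),E_0^{1/2}i)<C$.

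For items~(3)--(5) I exploit that block $j=n$ carries an undeformed copy of $V$. With $\Psi(w):=\sup_{s\in[0,T]}d(A(E,0,s)\cdot w,i)$, the exact relation above gives $\sup_{t\in[a_n,a_n+NT]}d(u'(E,t),i)=\max_{0\le r<N}\Psi\big(A(E)^r\cdot w_n\big)$; here $\Psi$ is a $1$‑Lipschitz \emph{convex} function on $\H$ with $\Psi(u(E))=\sup_s d(u(E,s),i)=:m$ and $\sup_{d(w,u(E))=\rho}\Psi(w)=m+\rho$. For $N$ large (using that $\Theta(A(E))$ is bounded away from $0,\tfrac12,1$ on $J$, which holds since $\epsilon_0$ is small) the orbit $\{A(E)^r\cdot w_n\}_{r<N}$ surrounds $u(E)$ on the circle $\{d(\,\cdot\,,u(E))=d(w_n,u(E))\}$ and is angularly $O(1)$‑dense there, so convexity of $\Psi$ (its sublevel sets are convex) forces $\max_{r<N}\Psi(A(E)^r\cdot w_n)\ge m+c\,d(w_n,u(E))$ for a universal $c>0$; hence $\sup_{t\in[0,T']}d(u'(E,t),i)\ge m+c\,d(w_n,u(E))$. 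Since $d(w_n,u(E))=d(\u_n(E),u(E))+o(1)=:r_n(E)+o(1)$ with $r_n(E)\ge c(C,M)\delta>0$ (the maximal gap $s_n=\delta$ displaces the fixed point because $d(u(E),E^{1/2}i)>C^{-1}$), this gives (3) for $n$ large. For (4) I estimate $r_n(E)=d(\u(R(E,\delta)A(E)^N),u(E))$ near the resonances of $A(E)^N$: writing $A(E)^N\approx\exp(2\pi x\,\xi_u)$, $R(E,\delta)\approx\exp(2\pi\beta\,\xi_v)$ with $\beta\asymp\delta$ and $\xi_u,\xi_v$ the unit rotation generators about $u(E)$ and $E^{1/2}i$, the product has infinitesimal generator $\approx x\xi_u+\beta\xi_v$, whose fixed point lies at distance $\asymp\min(1,\delta/x)\,d(u(E),E^{1/2}i)$ from $u(E)$ on the elliptic side; taking $\Lambda'$ to be the set of $E\in\Lambda$ with $N\Theta(A(E))$ within $\gamma$ of $\tfrac12\Z$ on the elliptic side, equidistribution gives $|\Lambda'|>\gamma\epsilon$ and $r_n(E)\ge c(C,M)\delta/\gamma$ on $\Lambda'$, whence (4) for a suitable $C'$. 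Finally, for (5): on the $j$‑th block $\tfrac1{NT}\int_0^{NT}d(u'(E,a_j+\sigma),i)\,d\sigma=\tfrac1T\int_0^T d(u(E,s),i)\,ds+O(r_j(E))+o(1)$, the total gap length is $O(n\delta/\sqrt N)=o(T')$ with bounded integrand, and $\sum_j r_j(E)=O(n/\sqrt N)$ since for $N$ large all but an $O(1/\sqrt N)$ fraction of the $s_j$ are $\le\delta\eta$ for any prescribed $\eta$; dividing by $T'\asymp2nNT$ gives the $\kappa$‑bound for $N$, then $n$, large.

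The step I expect to be hardest is (4): it is the only place where boundedness and the slow‑deformation limit do not by themselves suffice, and one must compute the true displacement of the fixed point of $R(E,\delta)A(E)^N$ as $E$ sweeps the resonances of $A(E)^N$, then balance the size $\asymp\delta/\gamma$ of the resulting excess against the measure $\asymp\gamma\epsilon$ of the resonant window --- exactly the ``Parseval‑type'' trade‑off that keeps such constructions from overproducing matrix growth. The companion delicate point is the $O(\delta\epsilon)$ (rather than $O(\sqrt\delta\,\epsilon)$) bound on $J_{\mathrm{bad}}$, which rests on the first‑order‑in‑$x$ cancellation in the trace expansion; the rest is bookkeeping around Lemma~\ref{blaparameter}, the isometry property of $A(E,0,\sigma)$, and convexity of hyperbolic distance.
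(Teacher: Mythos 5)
Your proposal follows the paper's proof essentially step for step: the same block matrices $G(E,\tau)=R(E,\delta\sin^{2N}\pi\tau)\,A(E)^N$ fed into Lemma~\ref{blaparameter}, the same $O(\delta x)$ cancellation in the trace (so the non-elliptic set has measure $O(\delta\epsilon)$, not $O(\sqrt\delta\,\epsilon)$), the same fixed-point-displacement mechanism with the resonant window where $2N\Theta(A(E))$ is within $(\asymp\delta,\gamma)$ of $\Z$ for item (4), and the same $\sin^{2N}$-concentration argument for item (5). Two steps, however, are not correct as written, and they are exactly the places where the paper inserts an extra argument.

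First, the conversion of the displacement $d(w_n,u(E))$ into growth of $\sup_t d(u'(E,t),i)$. You justify $\max_{r<N}\Psi(A(E)^r\cdot w_n)\ge m+c\,d(w_n,u(E))$ by saying the orbit is angularly $O(1)$-dense because $\Theta(A(E))$ is bounded away from $0,\tfrac12,1$. That is not enough: $A(E)$ acts on the circle about $u(E)$ as a rotation by angle $4\pi\Theta(A(E))$, so for $E$ with $\Theta(A(E))$ equal to (or, relative to $N$, very close to) $\tfrac14$ the projective orbit consists of two antipodal points, which can both lie at angle $\tfrac\pi2$ from the maximizing direction; the gain is then only $O(\rho^2)$ with $\rho\asymp\delta/\gamma$, which destroys item (4) (and erodes the margin in (3)). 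The paper handles this by one further small-measure exclusion: using $\frac{d}{dE}\Theta\neq0$ and $N$ large, it assumes on $\Lambda$ that $\{j\Theta(A(E))\}_{0\le j\le N-1}$ is $\tfrac1{100}$-dense mod $1$. Your measure budget in item (1) has room for the same exclusion, but it must be made; ``bounded away from $0,\tfrac12,1$'' does not imply it.

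Second, your justification of that same inequality by ``convexity of $\Psi$'' is not a proof. In $\H$ the geodesic chord joining two points of a large circle at small angular separation passes close to the center, so convexity along chords transfers no lower bound from the maximizer to nearby orbit points; and the Lipschitz bound from the maximizer is useless because chords of hyperbolic circles have length comparable to the diameter. The inequality is nevertheless true, and the paper's argument is the right (and simplest) one: $\Psi(w)=\sup_{s\in[0,T]}d\bigl(w,A(E,0,s)^{-1}\cdot i\bigr)$ is a supremum of distance functions, so one picks $q$ with $d(u(E),q)=m$ and applies the hyperbolic law of cosines on the circle of radius $\rho$ about $u(E)$: any point whose angle from the direction opposite to $q$ is small (which the density assumption above supplies) satisfies $d(p,q)\ge m+\tfrac12\rho$, uniformly in $\rho$. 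With these two local repairs --- and with your linearization heuristic for the displacement replaced, or backed up, by the explicit computation of the fixed point of $G(E,\tfrac12)$ as in the paper (your lower bound $r_n(E)\gtrsim_{C,M}\delta$ and the $\asymp\delta/\gamma$ estimate on the resonant window are correct, but as stated they are asserted rather than proved) --- your argument coincides with the paper's proof; the remaining items (1), (2), (5) are handled exactly as the paper does.
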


\begin{proof}

Let $D(E)=\begin {pmatrix} E^{1/4} & 0 \\ 0 & E^{-1/4} \end{pmatrix}$.
Let $G:\R_+ \times \R/\Z \to \SL(2,\R)$ be given by
$G(E,t)=D(E) R_{\delta
\frac {E^{1/2}} {2 \pi} \sin^{2N} \pi t}
D(E)^{-1} A(E)^N$.
We have
\be
A'(E)=G(E,\frac {2n-1} {2n}) G(E,\frac {2n-2} {2n})
\cdots G(E,\frac {1} {2n}) G(E,0).
\ee
%

We can write for $E$ near $E_0$,
\be
B(E) A(E) B(E)^{-1}=R_{\theta(E)},
\ee
where $B(E)=\B(A(E))$ and $\theta(E)=\Theta(A(E))$.
By Lemma \ref {bla7}, $\theta$ has non-zero derivative.


Thus we can write
\be
G(E,t)=D(E) R_{\delta
\frac {E^{1/2}} {2 \pi} \sin^{2N} \pi t} D(E)^{-1} B(E)^{-1} R_{N
\theta(E)} B(E).
\ee
Letting $Q(E)=B(E) D(E)$, we get
\be
\tr G(E,t)=\tr Q(E) R_{\delta
\frac {E^{1/2}} {2 \pi} \sin^{2N} \pi t}
Q(E)^{-1} R_{N \theta(E)}
\ee
Notice that $Q(E) \notin \SO(2,\R)$, since $Q(E) \cdot i \neq i$ (here we
use that $B(E)^{-1} \cdot i=u(E) \neq E^{1/2} i=D(E) \cdot i$ for $E$ near
$E_0$).  Thus we
can write $Q=R^{(1)} D^{(0)} R^{(2)}$, a product of rotation, diagonal
and rotation matrices, depending analytically on $E$.  Then
\be
\tr G(E,t)=\tr D^{(0)}(E) R_{\delta
\frac {E^{1/2}} {2 \pi} \sin^{2N} \pi t}
D^{(0)}(E)^{-1} R_{N \theta(E)}.
\ee
Write $D^{(0)}(E)=\begin{pmatrix} \lambda(E) & 0 \\
0 & \lambda(E)^{-1} \end {pmatrix}$.  We may assume that
$\lambda(E)>1$.  Then
\be
\lambda(E)=e^{d(u(E),E^{1/2} i)/2},
\ee
so that $\frac {1} {2 C}<\ln \lambda(E)<\frac {C} {2}$.
Then
\begin{align}
\tr G(E,t)=&2 \cos ((\delta E^{1/2}
\sin^{2 N} \pi t)+2 \pi N \theta(E))\\
\nonumber
&
-(\lambda(E)-\lambda(E)^{-1})^2 \sin (\delta
E^{1/2} \sin^{2 N} \pi t) \sin 2 \pi N \theta(E).
\end{align}
Thus
\be
|\tr G(E,t)-2 \cos ((\delta
E^{1/2} \sin^{2 N} \pi t)+2 \pi N \theta(E))| \leq
C_1 \delta \sin 2 \pi N \theta(E). 
\ee
We conclude that if $2 N \theta(E)$ is at distance at least
$C_2 \delta$ from $\Z$, then $|\tr G(E,t)|<2$.

We conclude that for $\epsilon$ sufficiently small, for $N$ sufficiently
large, the set of $E \in [E_0-\epsilon,E_0+\epsilon]$ such that $|\tr
G(E,t)| \geq 2$ for some $t$ has Lebesgue measure at most
$C_3 \delta \epsilon$.  By Lemma \ref {blaparameter}, for $n$ large we
will have $|\tr A'(E)|<2$ for a compact set
$\Lambda(\epsilon,\delta,N,n) \subset
[E_0-\epsilon,E_0+\epsilon]$ of Lebesgue measure at least
$2 (1-C_4 \delta) \epsilon$.  We may further assume that for $E \in
\Lambda(\epsilon,\delta,N,n)$, the sequence $\{j \theta(E)\}_{0 \leq j \leq
N-1}$ is $\frac {1} {100}$ dense $\mod 1$.
Thus for such $E$, and any $w \in \H$, and any $0 \leq t \leq T$,
there exists $0 \leq k \leq N-1$ such
that $d(A(E,0,t) A(E)^k \cdot w,i) \geq d(u(E,t),i)+\frac {1} {2} d(w,u(E))$. 
Taking $w=u'(E,a_m)$ for some $0 \leq m \leq 2n-1$ (where $a_j$ is as in the
definition of a $(\delta,N,n)$-padding), we get
\be
d(u'(E,t+k T+a_m,i) \geq d(u(E,t),i)+\frac {1} {2} d(u'(E,a_m),u(E)).
\ee
In particular,
\be
\sup_t d(u'(E,t),i) \geq \sup_t d(u(E,t),i)+\frac {1} {2} \max_{0 \leq m
\leq 2 n-1} d(u'(E,a_m),u(E)).
\ee

Lemma \ref {blaparameter}
shows that $u'(E,a_m)$ is near $\u(G(E,\frac {m}
{2 n}))$ for $n$ large.  In particular, $u'(E)$
is near $u(E)$, since $G(E,0)=A(E)$ and $u'(E,a_n)$ is near
$w(E)=\u(G(E,1/2))$.

We want to estimate the hyperbolic
distance between $w(E)$ and $u(E)$ in $\H$.  Let $w'(E)$ be the fixed point
of $D^{(0)}(E) R_{\delta
\frac {E^{1/2}} {2 \pi}}
D^{(0)}(E)^{-1} R_{N \theta(E)}$ in $\H$.  Then $w(E)=
B^{-1} R^{(1)} \cdot w'(E)$.  Since $u(E)=B^{-1} R^{(1)} \cdot i$,
it follows that
\be
d(w(E),u(E))=d(w'(E),i).
\ee
But $w'(E)$ is the solution $z \in \H$ of the equation $a z^2+b z+c=0$,
where
\be
a=\cos \delta E^{1/2} \sin 2 \pi N \theta(E)+
\lambda(E)^{-2} \sin \delta E^{1/2}
\cos 2 \pi N \theta(E),
\ee
\be
b=(\lambda(E)^2-\lambda(E)^{-2}) \sin \delta E^{1/2}
\sin 2 \pi N \theta(E),
\ee
\be
c=\cos \delta E^{1/2} \sin 2 \pi N \theta(E)+
\lambda(E)^2 \sin \delta E^{1/2}
\cos 2 \pi N \theta(E).
\ee
Then
\begin{align}
\Im w'(E)=&\left (
1+\frac {(\lambda(E)^2-\lambda(E)^{-2})
\sin \delta E^{1/2} \cos 2 \pi N \theta(E)}
{\cos \delta E^{1/2} \sin 2 \pi N \theta(E)+
\lambda(E)^{-2} 
\sin \delta E^{1/2} \cos 2 \pi N \theta(E)} \right .\\
\nonumber
&\left .-\frac {(\lambda(E)^2-\lambda(E)^{-2})^2
\sin^2 \delta E^{1/2} \sin^2 2 \pi N \theta(E)}
{4(\cos \delta E^{1/2} \sin 2 \pi N \theta(E)+
\lambda(E)^{-2} 
\sin \delta E^{1/2} \cos 2 \pi N \theta(E))^2}
\right )^{1/2}
\end{align}
Under the condition that $2 N \theta(E)$ is at distance at least
$C_2 \delta$ from $\Z$, we have
\be
\left |\frac {(\lambda(E)^2-\lambda(E)^{-2})^2
\sin^2 \delta E^{1/2} \sin^2 2 \pi N \theta(E)}
{4(\cos \delta E^{1/2} \sin 2 \pi N \theta(E)+
\lambda(E)^{-2} 
\sin \delta E^{1/2} \cos 2 \pi N \theta(E))^2}
\right | \leq C_5 \delta^2,
\ee
\be
|\frac {(\lambda(E)^2-\lambda(E)^{-2})
\sin \delta E^{1/2} \cos 2 \pi N \theta(E)}
{\cos \delta E^{1/2} \sin 2 \pi N \theta(E)+
\lambda(E)^{-2} 
\sin \delta E^{1/2} \cos 2 \pi N \theta(E)}| \geq C_6^{-1}
\delta \cot 2 \pi N \theta(E).
\ee
If $2 N \theta(E)$ is at distance $C_2 \delta<\gamma<1/4$ from $\Z$
then
\be
|\frac {(\lambda(E)^2-\lambda(E)^{-2})
\sin \delta E^{1/2} \cos 2 \pi N \theta(E)}
{\cos \delta E^{1/2} \sin 2 \pi N \theta(E)+
\lambda(E)^{-2} 
\sin \delta E^{1/2} \cos 2 \pi N \theta(E)}| \geq C_7^{-1}
\frac {\delta} {\gamma},
\ee
so that
\be
d(w'(E),i) \geq C_8^{-1} \frac {\delta} {\gamma}.
\ee

It follows that in this case
\be
\sup_t u'(E,t) \geq \sup_t u(E,t)+C_9^{-1} \frac {\delta} {\gamma}.
\ee

For $C_2 \delta<\gamma<1/4$, let $\Lambda'(\epsilon,\delta,N,n,\gamma)$
be the set of $E \in \Lambda(\epsilon,\delta,N,n)$ such
that $2 N \theta(E)$ is at distance at most $\gamma$ from $\Z$.  Since
$\theta$ has non-zero derivative, we have
$|\Lambda'(\epsilon,\delta,N,n,\gamma)| \geq \frac {3} {2}
\gamma \epsilon$, for $\epsilon$ small, $N$
sufficiently large and $n$ sufficiently large.

\comm{
By the slow variation estimate, if $E \in
\Lambda(\epsilon,\delta,N,n)$ and $n$ is large, then $u'(E,a_{[n/4]})$ is
near $w(E)$.

$\Prod_{k=0}^{[n/4]-1} C(E,\frac {k} {n})$ takes $u'(E)$ to $\tilde w(E)$
near $w(E)$.

Assume that $\psi(E)$ is irrational.  Then the sequence
$A(E)^j \cdot \tilde w(E)$ is dense in a circle centered on $u(E)$ with
hyperbolic radius $d(\tilde w(E),u(E))$, and the
speed it is becoming dense only depends on $E$ and an upper bound on
$d(\tilde w(E),u(E))$.

Thus for such $E$ and any $0 \leq t \leq T$, we can find bounded
(depending on $E$) $k(t)$ such that
\be
d(A(E,0,t) A(E)^{k(t)} \cdot \wilde w(E),i) \geq
u(E,t)+C_9^{-1} \frac {\delta} {\gamma}.
\ee
If $N$ is sufficiently large, this shows that
\be
d(u'(E,[n/4] N T+k(t) T+t),i) \geq d(u(E,t),i)+C_9^{-1} \frac {\delta}
{\gamma}.
\ee
A similar argument gives
\be
d(u'(E,[n/4] N T+k(t) T+t),i) \geq d(u(E,t),i)+C_9^{-1} \frac {\delta}
{\gamma}.
\ee
}

To conclude, let us show that if $E \in \Lambda(\epsilon,\delta,N,n)$ and
$2 N \theta(E)$ is $C_2 \delta$-away from $\Z$, then
\be
\frac {1} {T'} \int_0^{T'} d(u'(E,t),i) dt-\frac {1} {T} \int_0^T
d(u(E,t),i) dt
\ee
is small.  The formulas for $w'$ imply that
$u'(E,t)$ is at bounded hyperbolic distance from some
$u(E,t')$.  In fact, if $a_j \leq t \leq
a_{j+1}$ then $u'(E,t)$ is at bounded hyperbolic distance from
$u(E,t-a_j)$.  Moreover, if $a_j \leq t \leq a_j+T$, then
$A(E,0,t-a_j)^{-1}
u'(E,t)$ is near $\u(G(E,\frac {j} {2 n}))$.  If
$\frac {j} {2 n}$ is not close to $\frac {1} {2}$, the
estimates give that the fixed point of $G(E,\frac {j} {2 n})$
is close to $u(E)$, provided $N$ is large.  It follows that $u'(E,t)$ is
near $u(E,t-a_j)$.  The result follows.
\end{proof}

\begin{lemma} \label {bla21}

For every $C>0$, $M>0$, there exist $C'>0$ and $\delta_0>0$ with the
following
property.  Let $V:\R/T\Z \to \R$ be a smooth non-negative function with
$V(t)=0$ near $0$.
Let $\Xi \subset \Omega(V) \cap
[M^{-1},M]$ be a compact subset such
that $C^{-1}<d(u[V](E),E^{1/2} i)<C$ for every $E \in \Lambda$.
Then for every $\kappa>0$, $R \in \N$, for every $0<\delta<\delta_0$, for
every $N$ sufficiently large, for every $n$ sufficiently large, if $V':\R/T'
\Z \to \R$ is the $(\delta,N,n)$-padding of $v$, then there exists a compact
subset $\Xi' \subset \Xi \cap \Omega(V')$ such that
\begin{enumerate}
\item For $j \geq 0$,
the conditional probability that $E \in \Xi$ belongs to $\Xi'$,
given that $\frac {j} {R} \leq \sup_t d(u[V](E,t),i)<
\frac {j+1} {R}$ is at least $1-2 C'$,
\item For every $E \in \Xi'$, $d(u[V'](E),u[V](E))<\kappa$ and
$C^{-1}<d(u[V'](E),E^{1/2} i)<C$,
\item For every $E \in \Xi'$,
\be
\sup_t d(u[V'](E,t),i) \geq \sup_t d(u[V'](E,t),i),
\ee
\item For $j \geq 0$, and for every $C' \delta<\gamma<1/4$,
the conditional probability that $E \in \Xi$ belongs to $\Xi'$ and
\be
\sup_t d(u[V'](E,t),i)>\sup_t d(u[V](E,t),i)+C'^{-1} \frac
{\delta} {\gamma},
\ee
given that $\frac {j} {R} \leq \sup_t d(u[V](E,t),i)<\frac {j+1}
{R}$ is at least $\frac {\gamma} {3}$.
\item For every $E \in \Xi'$,
\be
\left |\frac {1} {T'} \int_0^{T'} d(u[V'](E,t),i) dt-\frac {1} {T}
\int_0^T d(u[V](E,t),i) dt \right |<\kappa.
\ee
\end{enumerate}

\end{lemma}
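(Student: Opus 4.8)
The plan is to deduce this from the preceding single-energy lemma by a finite covering of $\Xi$; the only real content is upgrading the \emph{absolute} measure estimates of that lemma to the \emph{conditional} ones stated here, where one conditions on the level sets of $g(E):=\sup_t d(u[V](E,t),i)$.

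First I would observe that $g$ is continuous on $\Omega(V)$ (because $(E,t)\mapsto u[V](E,t)$ is continuous there and $t$ runs over the compact circle $\R/T\Z$), hence bounded on the compact set $\Xi$, so that only finitely many of the Borel sets $L_j:=\{E\in\Xi:\tfrac{j}{R}\le g(E)<\tfrac{j+1}{R}\}$ are non-empty. I would then cover $\Xi$ by finitely many intervals $I_i=[E_i-\epsilon_i,E_i+\epsilon_i]$ with $E_i\in\Xi$ and $\epsilon_i<\epsilon_0(E_i)$, $\epsilon_0(E_i)$ being the threshold produced by the previous lemma at the center $E_i$ (legitimate, since $E_i\in\Xi$ satisfies its hypotheses), arranged by a standard one-dimensional argument so that no point lies in more than two of the $I_i$. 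Applying the previous lemma on each $I_i$ --- with the given $\kappa$, with $R$ in the role of its integer parameter, and with $\delta$, then $N$, then $n$ all taken beyond the finitely many thresholds that appear --- yields compact sets $\Lambda_i\subset\Omega(V')\cap I_i$ and, for each $C'\delta<\gamma<1/4$, compact sets $\Lambda_i'\subset\Lambda_i$, satisfying the five conclusions of that lemma on $I_i$. Setting $\Xi':=\Xi\cap\bigcup_i\Lambda_i$ (a compact subset of $\Xi\cap\Omega(V')$), conclusions (2), (3) and (5) are immediate, since every $E\in\Xi'$ lies in some $\Lambda_i$ where the corresponding assertion of the previous lemma applies (with $u[V]$ on the right-hand side of (3)).

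For (1) and (4) I would use that the proof of the previous lemma gives more than its statement: $I_i\setminus\Lambda_i$ is contained, up to a set of measure $o(1)$ as $n\to\infty$, in $\{E\in I_i:\operatorname{dist}(2N\theta(E),\Z)<C_2\delta\}$ with $\theta=\Theta(A[V](\cdot))$, and for $C'\delta<\gamma<1/4$ the set $\Lambda_i'$ contains, up to a similarly negligible set, $\{E\in I_i:C_2\delta<\operatorname{dist}(2N\theta(E),\Z)<\gamma\}$. Since $\theta$ has non-vanishing derivative on $I_i$ by Lemma \ref{bla7}, the push-forward of Lebesgue measure restricted to \emph{any} fixed measurable $S\subset I_i$ under $E\mapsto 2N\theta(E)\bmod 1$ converges weakly to $|S|$ times Lebesgue measure on $\R/\Z$ as $N\to\infty$ (non-stationary phase). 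Taking $S=L_j\cap I_i$ --- finitely many sets --- I would get, for $N$ and then $n$ large (after adjusting $C'$ if needed), the relative estimates $|(L_j\cap I_i)\setminus\Lambda_i|\le C'\delta\,|L_j\cap I_i|$ and $|(L_j\cap I_i)\cap\Lambda_i'|\ge\tfrac{3}{4}\gamma\,|L_j\cap I_i|$, the latter because $\gamma>C'\delta$ keeps the gain window from being swallowed by the missing set. Summing over $i$, using $L_j\subset\bigcup_iI_i$ and that the overlap multiplicity is at most two, yields $|L_j\setminus\Xi'|\le 2C'\delta\,|L_j|$ and $|\{E\in L_j:E\in\Xi'\text{ and the inequality of (4) holds for this }\gamma\}|\ge\tfrac{\gamma}{3}|L_j|$, which are exactly conclusions (1) and (4).

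The main obstacle --- and the only genuinely non-routine step --- is this passage from absolute to conditional density: one needs the ``bad'' and ``gain'' sets of the previous lemma to equidistribute against the \emph{fixed} sets $L_j\cap I_i$, not merely to have the right total measure. This is admissible because the cover $\{I_i\}$ and the sets $L_j$ are fixed before $N$, so $N$ (and then $n$) may be chosen large enough that $E\mapsto 2N\theta(E)\bmod 1$ equidistributes against these finitely many fixed measurable sets to the required accuracy; alternatively, one may simply state conclusions (1) and (4) of the previous lemma in this relative form from the start, since its proof already delivers it.
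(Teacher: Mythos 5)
Your argument is correct and is essentially the paper's proof: the paper disposes of this lemma with the one-line remark that it follows from the previous lemma by a covering argument, noting only that the conditional statements are automatic for large $j$ by compactness of $\Xi$ — which is exactly your finite cover of multiplicity two together with the observation that only finitely many level sets $L_j$ are non-empty. The only content you add is to make explicit that the bad and gain sets of the previous lemma are preimages of small arcs under $E\mapsto 2N\theta(E)$ (with $\theta$ of non-vanishing derivative), hence equidistribute against the fixed sets $L_j\cap I_i$ once $N$ and then $n$ are taken large; this relative form of the previous lemma is precisely what the paper's terse ``covering argument'' implicitly relies on for the conditional estimates, and your accounting (giving $1-2C'\delta$ and $\tfrac{3}{8}\gamma\ge\tfrac{\gamma}{3}$ after the multiplicity-two summation) is sound.
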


\begin{proof}

Follows from the previous lemma by a covering argument.  (Notice that the
statements about conditional probabilities are automatic for large $j$,
since $\sup_t d(u[V](E,t),E^{1/2} i)$ is bounded by compactness of $\Xi$.)
\end{proof}

\noindent{\it Proof of Lemma \ref {bla22}.}
Notice that by non-constancy of $V^{(0)}$, $u^{(0)}(E) \neq E^{1/2} i$ for
almost every $E \in \Sigma(V^{(0)})$.  Up
to increasing $M$, we can assume that $\inf \Sigma(V^{(0)})>M^{-1}$.
Then for sufficiently large $C>0$,
there exists a compact subset $\Xi^{(0)} \subset \Sigma(V^{(0)}) \cap
(-\infty,M] \cap \Omega(V^{(0)})$ such that
$|(\Sigma(V^{(0)}) \setminus \Xi^{(0)}) \cap
(-\infty,M]|<\frac {\xi} {2}$, and for every $E \in \Xi^{(0)}$ we have
\be
C^{-1}<d(u[V^{(0)}](E),E^{1/2} i)<C,
\ee
and
\be
\sup_t d(u[V^{(0)}](E,t),i)<\frac {C} {2}.
\ee
Let $\delta_0=\delta_0(C,M)$ and $C'=C'(C,M)$ be as in
Lemma \ref {bla21}.

Let $P$ be maximal so that $(1-2 C' \delta)^P>1-\frac {\xi} {2 M}$.
Choose very small $0<\delta<\delta_0$, choose $R \in \N$ very large (in
particular, much larger than $\delta^{-1}$), and
take $\kappa>0$ very small.  Define sequences $V^{(j)}$, $\Xi^{(j)}$, $1
\leq j \leq P$, so that $V^{(j)}$, $\Xi^{(j)}$
is obtained by applying Lemma \ref {bla21} to $V^{(j-1)}$, $\Xi^{(j-1)}$.
It follows that
\be
|\Sigma(V^{(0)}) \cap (-\infty,M] \setminus \Xi^{(P)}| \leq \frac {\xi}
{2}+M (1-(1-2 C' \delta)^P)<\xi.
\ee
It also follows that
\be
\frac {1} {T^{(P)}} \int_0^{T^{(P)}} d(u[V^{(P)}](E,t),i) dt \leq \frac {C} {2}
+\kappa P<C.
\ee

Let $Z_j$, $0 \leq j \leq P$, be random variables on $\Xi^{(0)}$ given as
follows.  If $E \notin \Xi^{(j)}$, let $Z_j=Z_{j-1}+1$.  Otherwise, let
$Z_j=\frac {j} {R}$, where $j$ is maximal with
$\sup_t d(u[V^{(j)}](E,t),i) \geq \frac {j} {R}$.

Let $L \subset \N$ be the set of all $l$ with
$4 \delta C'^{-1} R<l<C'^{-2} R$.
We have $Z_0 \geq 0$, and the conditional probability that
$Z_j-Z_{j-1} \geq \frac {l} {R}=C'^{-1} \frac {\delta} {\gamma}$,
given $Z_{j-1}$ is at least $\frac {\gamma} {3}=\frac {\delta R}
{3 l C'}$, provided $C' \delta<\gamma<1/4$, i.e.
$l \in L$.  Consider i.i.d.
random variables $W_j$, $1 \leq j \leq P$, taking only values of the form
$\frac {l} {R}$ with $l=0$ or $l \in L$, and such that
\be \label {blawj}
p(W_j \geq \frac {l} {R})=\frac {\delta R} {3 l C'}.
\ee
whenever $l \in L$.  Since $Z_0 \geq 0$ and $p(Z_j-Z_{j-1} \geq \frac {l}
{R}|Z_{j-1})
\geq p(W_j \geq \frac {l} {R})$ for every $l \in \Z$, we get
\be
p(Z^{(P)} \geq \frac {m} {R}) \geq p(\sum_{j=1}^P W_j \geq \frac {m} {R})
\ee
for every $m \in \Z$.

To conclude, it is enough to show that $p(\sum_{j=1}^P W_j<C_0)<\xi/2$,
provided $\delta$ is sufficiently small.

By (\ref {blawj}), for $C''<k<-C''-\ln \delta$,
we have $p(2^k \delta<W_j<2^{k+1} \delta)>C''^{-1} 2^{-k}$ (here $C''$ is an
appropriately large constant depending on $C'$,
and we are using that $R$ is
much larger than $\delta^{-1}$).  We also have
$P \geq C'''^{-1} \delta^{-1}$ for some constant $C'''$ depending on
$M$, $\xi$ and $C'$.  By the Law of Large Numbers, for each $D \in \N$, and
each $C''<k \leq D$, if $\delta$ is sufficiently small, then with
probability at least $1-\frac {\xi} {4 D}$, we will have
$2^k \delta<W_j<2^{k+1} \delta$ for a set of $1 \leq j \leq P$ of
cardinality at least $C''^{-1} 2^{-k-1} P \geq
C'''^{-1} C''^{-1} 2^{-k-1} \delta^{-1}$.  This implies that, with
probability at least $1-\frac {\xi} {4}$,
$\sum W_j \geq \frac {D-[C'']} {2 C''' C''}$. 
The result follows by taking $D \geq C''+2 C''' C'' C_0$.
\qed

\subsection{Passing to the limit}

\begin{lemma} \label {bla25}

Let $F_t:S \to S$ be a $T$-periodic time-change of a solenoidal flow, and let
$V:\R/T \Z \to \R$, $v:S \to \R$ be continuous functions such that
$V(t)=v(F_t(0))$ is smooth.  Assume
that $V(t)=0$ for $T-\epsilon_0 \leq t \leq T$ for some
$0<\epsilon_0<T$.
Then for $0<\delta<\epsilon_0$, and
for every $N,n \in \N$, the $(\delta,N,n)$-padding $V'$ of
$V$ has the form
$V'(t)=v'(F'_t(0))$, where $F'_t:S' \to S'$ is a $T'$-periodic
time-change of a solenoidal flow, $v':S' \to \R$ is continuous, and
$(F',v')$ is $\frac {\delta} {\epsilon_0}$-close to a lift
of $(F_t,v)$.

\end{lemma}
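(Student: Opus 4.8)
The plan is to realize $V'$ by building $S'$ as a finite cyclic cover of $S$ and then choosing a time-change that slows the flow down precisely in the stretches where $V$ vanishes, so that no new value of the potential is ever introduced. Since $F_t$ is $T$-periodic, $F_T=\mathrm{id}$, so the $F^S$-orbit of $0$ is a periodic orbit; this forces the group $K$ over which $S$ is a solenoid to be finite, hence cyclic, say $K\cong\Z/m\Z$. Thus $S$ is a circle, traversed once by $F$ in time $T$. I would take $S'$ to be the solenoid over $K'=\Z/(2nNm)\Z$, i.e.\ the $2nN$-fold cyclic cover of $S$, with the natural projection $p=p_{S',S}:S'\to S$ (the reduction mod $m$, which is compatible with the embeddings). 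View $S'$ as $2nN$ consecutive ``copies'' of $S$, indexed by pairs $(j,k)$ with $0\le j\le 2n-1$ and $0\le k\le N-1$, the copies $(j,0),\dots,(j,N-1)$ forming the $j$-th ``group'' and each copy being mapped isomorphically onto $S$ by $p$; inside each copy let $Z\subset S'$ denote the sub-arc projecting to $\{F_\tau(0):\tau\in[T-\epsilon_0,T]\}$, which is traversed by $F$ in time exactly $\epsilon_0$ and on which $v\circ p\equiv 0$.

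Next I would set $v':=v\circ p:S'\to\R$, which is continuous with $|v'-v\circ p|=0$, and define the time-change $F'$ of $F^{S'}$ by prescribing its speed $w_{F'}$: put $w_{F'}=w_F\circ p$ on all of $S'$, \emph{except} on the copy of $Z$ lying in the last copy $(j,N-1)$ of each group, where I would replace $w_F\circ p$ by $\rho_j\cdot(w_F\circ p)$ for a continuous $\rho_j:Z\to(0,1]$ that equals $1$ at the two endpoints of $Z$ and is chosen so that the $F'$-transit time across this copy of $Z$ equals $\epsilon_0+\ell_j$, where $\ell_j=\delta\sin^{2N}\pi\frac{j}{2n}$. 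A $\rho_j$ with the additional bound $|\ln\rho_j|\le\delta/\epsilon_0$ exists: the transit time across $Z$ is $\epsilon_0$ when $\rho_j\equiv 1$, while keeping $\rho_j$ valued in $[e^{-\delta/\epsilon_0},1]$ and $=1$ only in small neighbourhoods of the endpoints the transit time can be raised to arbitrarily close to $\epsilon_0 e^{\delta/\epsilon_0}>\epsilon_0+\delta\ge\epsilon_0+\ell_j$, so by the intermediate value theorem the value $\epsilon_0+\ell_j$ is attained. This makes $w_{F'}$ continuous and positive, so $F'$ is a genuine time-change, and $|\ln w_{F'}-\ln(w_F\circ p)|\le\delta/\epsilon_0$; combined with $v'=v\circ p$, this says exactly that $(F',v')$ is $\tfrac{\delta}{\epsilon_0}$-close to the lift of $(F,v)$.

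It then remains to verify $V'(t)=v'(F'_t(0))$ and $T'$-periodicity, which is bookkeeping against the definition of the $(\delta,N,n)$-padding. Traversing the copies $(j,0),\dots,(j,N-2)$ and the part of $(j,N-1)$ outside $Z$ takes $F'$-time $NT-\epsilon_0$; there the flow agrees, via $p$, with $F$, so $v'(F'_t(0))=V(t-a_j)=V'(t)$. Then crossing the copy of $Z$ in $(j,N-1)$ takes $F'$-time $\epsilon_0+\ell_j$, landing exactly at the start of group $j+1$ at time $a_{j+1}$, and throughout this sub-arc $v'(F'_t(0))=0$, which matches $V'\equiv 0$ on $[a_j+NT-\epsilon_0,a_{j+1}]$. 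Summing transit times over the $2n$ groups gives $2nNT+\sum_{j=0}^{2n-1}\ell_j=T'$, hence $F'_{T'}=\mathrm{id}$. I expect the main obstacle to be precisely this matching step: aligning the reparametrized flow with the interval decomposition in the definition of the padding, and in particular arranging the multiplier $\rho_j$ to be continuous at the junctions between copies while still meeting the sharp closeness bound $\delta/\epsilon_0$; once the set-up is in place, each individual verification is routine.
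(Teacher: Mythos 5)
Your proposal is correct and is essentially the paper's own argument: take the $2nN$-fold cyclic cover $S'$, set $v'=v\circ p$, and slow the lifted flow only on the lifted zero-arcs at the end of each block of $N$ repetitions so that their transit times become $\epsilon_0+\delta\sin^{2N}\pi\tfrac{j}{2n}$, with the multiplier bounded via $\epsilon_0+\delta<\epsilon_0 e^{\delta/\epsilon_0}$ (the paper phrases this as $\|\rho\|_{C^0}$ close to $\ln\tfrac{\epsilon_0+\delta}{\epsilon_0}<\tfrac{\delta}{\epsilon_0}$). The only additions are harmless bookkeeping (e.g.\ noting $S$ is a circle), so nothing further is needed.
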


\begin{proof}

Let $U_0=\{F_t(0),\, T-\epsilon_0<t<T\}$.
We take $S'$ as the $N n$-cyclic cover of $S$.  Let $p:S' \to S$ be the
corresponding projection.  Let $U'_{0,j}$,
$0 \leq j \leq 2 n N-1$, be the connected components of $U'_0=p^{-1}(U_0)$,
labeled so that they are positively cyclically ordered and such that
the right boundary of $U'_{0,2 n N-1}$ is $0$.
Then there exists a continuous non-positive function $\rho:S' \to \R$
such that $\rho=0$ outside $U'_0$ and any $U'_{0,j}$ with $j$ not divisible
by $N$, and such that
\be
\int_{U'_{0,j}} \frac {1} {e^{\rho(x)} w_F(p(x))} dx=
\epsilon_0+\delta \sin^{2 N} \pi \frac {j} {N}
\ee
is equal to
if $j$ is divisible by
$N$.  Indeed, we can take $\|\rho\|_{C^0}$ arbitrarily close to
$\ln \frac {\epsilon_0+\delta} {\epsilon_0}$, and hence less than $\frac
{\delta} {\epsilon_0}$.

The result
then follows with $v'=v \circ p$, and $w_{F'}=e^\rho w_F \circ p$.
\end{proof}

\begin{lemma} \label {bla3005}

Let $F_t:S \to S$ be a periodic time-change of a solenoidal flow, and
let $v:S \to \R$ be continuous non-constant non-negative function
such that $t \mapsto v(F_t(0))$ is smooth and $v(F_t(0))=0$ for
$t$ near $0$.  Then for every $\epsilon_1,C_1,M,\kappa>0$,
there exists a periodic
time-change of a solenoidal flow $F'_t:S' \to S'$, and a continuous
non-constant non-negative function such that $t \mapsto v'(F'_t(0))$ is
smooth, $v'(F'_t(0))=0$ for $t$ near $0$, $(F',v')$ is $\kappa$-close to a
lift of $(F,v)$, and $(F',v')$ is $(\epsilon_1,C_1,M)$-crooked.

\end{lemma}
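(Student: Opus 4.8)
The plan is to deduce the statement from Lemma~\ref{bla22}, which carries out the analytic work; what remains is to package its output in the solenoidal language and to extract crookedness from the integral bound~\eqref{blaTP}. I would begin by setting $V^{(0)}(t)=v(F_t(0))$ and $(F^{(0)},v^{(0)},S^{(0)})=(F,v,S)$. By hypothesis $V^{(0)}$ is smooth and vanishes on a two-sided neighborhood $(-\epsilon_0,\epsilon_0)$ of $0$; it is non-negative since $v\ge 0$, and non-constant because a periodic time-change of a solenoidal flow forces $S$ to be a circle on which $F$ has a single periodic orbit (that of $0$), so $V^{(0)}$ takes every value of the non-constant $v$. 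I would then apply Lemma~\ref{bla22} with $M$ and a small $\xi>0$ to get $C=C(M,\xi)$; choose $C_0$ large in terms of $C,\epsilon_1,C_1$; and finally choose $\delta>0$ small: below $\epsilon_0$, small enough for the conclusion of Lemma~\ref{bla22}, and small enough for the closeness threshold of Lemma~\ref{bla3014} with data $(M,\epsilon_1/2)$. The lemma produces $0<P<\xi\delta^{-1}$, potentials $V^{(1)},\dots,V^{(P)}$ (with $V^{(j)}$ the $(\delta,N^{(j)},n^{(j)})$-padding of $V^{(j-1)}$), and a compact $\Gamma\subset(0,M]\cap\Omega(V^{(P)})$ with $|\Sigma(V^{(0)})\cap(-\infty,M]\setminus\Gamma|<\xi$, $\sup_t d(u[V^{(P)}](E,t),i)\ge C_0$, and $\frac{1}{T^{(P)}}\int_0^{T^{(P)}}d(u[V^{(P)}](E,t),i)\,dt\le C$ on $\Gamma$.

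Next I would realize each padding step on a solenoid via Lemma~\ref{bla25}. The key observation is that padding never shrinks the zero-neighborhood of $0$: near $0$ one has $V^{(j)}=V^{(j-1)}$, and $0$ is surrounded by a padding interval of zeroes, so inductively every $V^{(j)}$ vanishes on the \emph{same} $(-\epsilon_0,\epsilon_0)$ and is smooth there. Hence Lemma~\ref{bla25} applies at each step with the fixed $\epsilon_0$ (since $\delta<\epsilon_0$), yielding periodic time-changes $F^{(j)}$ of solenoidal flows on circles $S^{(j)}$ and continuous non-negative non-constant $v^{(j)}$ with $v^{(j)}(F^{(j)}_t(0))=V^{(j)}(t)$, each $(F^{(j)},v^{(j)})$ being $\tfrac{\delta}{\epsilon_0}$-close to a lift of $(F^{(j-1)},v^{(j-1)})$. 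Closeness composes additively along the tower of projections $S^{(P)}\to\cdots\to S^{(0)}$ (differences of $\ln w$ are measured in sup norm, and composing with a projection does not increase it), so $(F^{(P)},v^{(P)})$ is $\tfrac{P\delta}{\epsilon_0}<\tfrac{\xi}{\epsilon_0}$-close to a lift of $(F,v)$; taking $\xi$ small in terms of $\kappa,\epsilon_0$ makes this $<\kappa$, and taking $\xi$ below the Lemma~\ref{bla3014} threshold times $\epsilon_0$ forces $|\Sigma(F^{(P)},v^{(P)})\cap(-\infty,M]\setminus\Sigma(V^{(0)})|<\epsilon_1/2$. Set $(F',v',S')=(F^{(P)},v^{(P)},S^{(P)})$; this already gives every assertion except crookedness.

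For crookedness, note that $S'$ is a circle and $F'$ has a single periodic orbit (that of $0$) of period $T^{(P)}$, so its unique invariant probability $\mmu$ is, in the flow-time parameter $s\mapsto F'_s(0)$, simply $ds/T^{(P)}$; and for $x=F'_s(0)$ one has $A[F',v'](E,x,0,t)=A[V^{(P)}](E,s,s+t)$ since the potential along the orbit of $x$ is $V^{(P)}(\cdot+s)$. Let $\Gamma'\subset\Gamma$ be the full-measure subset of $E$ for which Lemma~\ref{bla3} holds. Then for $E\in\Gamma'$ and every $s$,
\be
\inf_{\|w\|=1}\sup_{t>0}\|A[F',v'](E,F'_s(0),0,t)\,w\|=\sup_\tau e^{(r(E,\tau)-r(E,s))/2},
\ee
where $r(E,s)=d(u[V^{(P)}](E,s),i)$; since $\sup_\tau r(E,\tau)\ge C_0$ on $\Gamma$, the right side is $\ge e^{(C_0-r(E,s))/2}$. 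By Chebyshev's inequality and~\eqref{blaTP}, $\mmu\{x=F'_s(0):r(E,s)\ge 2C/\epsilon_1\}\le\epsilon_1/2$, and off that set the quantity above is $\ge e^{(C_0-2C/\epsilon_1)/2}>C_1$ provided $C_0>2C/\epsilon_1+2\ln C_1$, which is how $C_0$ is chosen. So for every $E\in\Gamma'$ the relevant set of $x$ has $\mmu$-measure $>1-\epsilon_1$, while $|(\Sigma(F',v')\setminus\Gamma')\cap(-\infty,M]|<\epsilon_1/2+\xi<\epsilon_1$ once $\xi<\epsilon_1/2$. This is exactly $(\epsilon_1,C_1,M)$-crookedness.

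The step I expect to be the main obstacle is the closeness bookkeeping in the second paragraph: one performs $P\approx\delta^{-1}$ padding steps, and it is not obvious a priori that this can be done while keeping the accumulated closeness below a prescribed $\kappa$. The point is that padding only inserts new zeroes around $0$ and copies the old data elsewhere, so the zero-neighborhood — and hence the per-step cost $\tfrac{\delta}{\epsilon_0}$ — does not degrade, and the total $P\delta<\xi$ is governed by the same parameter $\xi$ that already controls the spectral loss in Lemma~\ref{bla22}. The only other delicate point is that ``crooked'' concerns the $\mmu$-measure of bad base points $x$, whereas~\eqref{blaTP} is an average over flow time along one orbit; these agree precisely because a periodic solenoidal flow lives on a circle consisting of a single periodic orbit, so $\mmu$ is just normalized flow-time Lebesgue measure.
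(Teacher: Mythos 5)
Your proposal is correct and follows essentially the same route as the paper: apply Lemma \ref{bla22} to $V^{(0)}(t)=v(F_t(0))$ with $\xi$ small compared to $\epsilon_0\kappa$ and $\epsilon_1$, realize the $P$ paddings via Lemma \ref{bla25} (accumulating closeness $P\delta/\epsilon_0<\xi/\epsilon_0$), control the spectral loss by Lemma \ref{bla3014}, and extract crookedness from \eqref{blaTP} via Chebyshev together with Lemma \ref{bla3}, choosing $C_0$ large in terms of $C,\epsilon_1,C_1$. The only differences are cosmetic (your choice $C_0>2C/\epsilon_1+2\ln C_1$ with threshold $2C/\epsilon_1$, versus the paper's $C_0=C_1^2C/\epsilon_1$, and your explicit remarks that padding preserves the zero-interval and that $\mmu$ is normalized flow-time measure, which the paper leaves implicit).
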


\begin{proof}

Let $\epsilon_0>0$ be such that $v(F_t(0))=0$ for
$T-\epsilon_0 \leq t \leq T$.
Apply Lemma \ref {bla22} to $V^{(0)}(t)=v(F_t(0))$, with parameters $C_0$ and
$\xi<\epsilon_0 \kappa$
to be specified below, to get $\Gamma$ and $P$, and then
Lemma \ref {bla25} $P$ times, to get $(F',v')$ with
$V^{(P)}(t)=v'(F'_t(0))$ such that $(F',v')$ is
$\frac {\xi} {\epsilon_0}$-close to a lift of $(F,v)$.
By Lemma \ref {bla3014}, if $\xi$ is small then $|\Sigma(V^{(P)})
\cap (-\infty,M] \setminus \Sigma(V^{(0)})|<\epsilon_1/2$, and if
additionally $0<\xi<\epsilon_1/2$ we conclude that
$|\Sigma(V^{(P)}) \cap (-\infty,M] \setminus \Gamma|<\epsilon_1$.  Fix such
$\xi$ and let $C$ be as in Lemma \ref {bla22}.

By (\ref {blaTP}),
for every $E \in \Gamma$, the set $Z_E$ of all $t \in \R/T^{(P)}\Z$ with
\be
d(u[V^{(P)}](E,t),i) \leq \frac {C} {\epsilon_1}
\ee
has measure at least
$(1-\epsilon_1) T^{(P)}$.  By Lemma \ref {bla3}, for almost every $E \in
\Gamma$, $t_0 \in Z_E$ implies that
\be
\inf_{w \in \R^2,\, \|w\|=1} \sup_{t>t_0} \|A[V^{(P)}](E,t_0,t)\| \geq
\left (\frac {C_0 \epsilon_1} {C} \right )^{1/2},
\ee
So by taking $C_0=C_1^2 C/\epsilon_1$, we get that $(F',v')$ is
$(\epsilon_1,C_1,M)$-crooked.
\end{proof}

\noindent{\it Proof of Theorem \ref {continuumunbounded}.}
Let $V^{(0)}:\R/\Z \to \R$ be a smooth non-constant
non-negative periodic function with $V^{(0)}(t)=0$ near $0$.
We can see $\R/\Z$ as the solenoid $S^{(0)}$ corresponding to the trivial
group.  Let $F^{(0)}$ be the solenoidal flow on $S^{(0)}$.

Let $\kappa_0=1$.
Apply Lemma \ref {bla3005} inductively to obtain a sequence
$(F^{(j)},v^{(j)})$, $j \geq 1$ such that
\begin{enumerate}
\item $(F^{(j)},v^{(j)})$ is $(2^{-j},2^j,2^j)$-crooked,
\item $|(\Sigma(F^{(j)},v^{(j)}) \setminus
\Sigma(F^{(j-1)},v^{(j-1})) \cap (-\infty,2^j]|<\kappa_j$,
\item $(F^{(j)},v^{(j)})$ is $\kappa_j$-close to the lift of
$(F^{(j-1)},v^{(j-1)})$,
\item $\kappa_j<\kappa_{j-1}/2$ is chosen so small that if $(F,v)$ is
$2 \kappa_j$-close to the lift of
$(F^{(j-1)},v^{(j-1)})$, and
$|(\Sigma(F,v) \setminus
\Sigma(F^{(j-1)},v^{(j-1)}) \cap (-\infty,2^j]|<2 \kappa_j$,
then $(F,v)$ is
$(2^{-j},2^j)$-nice (use Lemma \ref {bla3015}),
$(2^{-j},2^j)$-good (use Lemma \ref {bla5}), and if $j \geq 2$,
$(2^{1-j},2^{j-1},2^{j-1})$-crooked (use Lemma \ref {bla3007}).
\end{enumerate}

Let $(F,v)$ be the limit of the $(F^{(j)},v^{(j)})$ obtained by Lemma \ref
{bla3022}.  Then $(F,v)$ is $(2^{-j},2^j)$-good for all
$j$, so the Lyapunov exponent vanishes in the spectrum.  Moreover,
$(F,v)$ is $(2^{-j},2^j)$-nice for all $j$,
so the i.d.s. is absolutely continuous.\footnote {Notice that since
$\Sigma(F,v)$ contains $\bigcap_{j' \geq j}
\Sigma(F^{(j')},v^{(j')})$ (see Lemma \ref {bla3014new}),
we must have $|(\Sigma(F,v) \setminus
\Sigma(F^{(j-1)},v^{(j-1)}) \setminus (-\infty,2^j]|<2 \kappa_j$.}
By Theorem \ref {3041}, for almost every $x \in S$ the spectral measure
is purely
absolutely continuous.  By construction, $(F,v)$ is
$(2^{-j},2^j,2^j)$-crooked for all $j$.  Thus for almost every $E$ in the
spectrum, for almost every $x \in S$, all non-trivial eigenfunctions are
unbounded.
\qed

\section{Continuum case: breaking almost periodicity}

The example discussed in the previous section can be verified to be not
almost periodic.  Here we will discuss a simpler example that will be easier
to analyze.

\subsection{Spectral measure}

Given a bounded continuous function $V:\R \to \R$, we denote by $\mu_V$ the
spectral measure for the line Schr\"odinger operator.  It has some basic
continuity property:

\begin{lemma} \label {blacont}

Let $V:\R \to \R$ be a bounded continuous function, and let $V^{(n)}:\R \to
\R$, $n \in \N$ be a sequence of uniformly bounded continuous functions
such that $V^{(n)} \to V$ uniformly on compact subsets of $\R$.  Then
$\int \phi d\mu_{V^{(n)}} \to \int \phi d\mu_V$
for every compactly supported
continuous function $\phi:\R \to \R$.

\end{lemma}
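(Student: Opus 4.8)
The plan is to reduce the weak convergence of spectral measures to a concrete analytic object whose continuity under local-uniform convergence of the potential is classical, namely the Weyl--Titchmarsh $m$-functions (equivalently, the Green's function or the resolvent diagonal). Recall that for a bounded continuous potential $V$ the half-line Weyl solutions $\psi_\pm(E,t)$, defined for $E$ in the upper half-plane $\C_+$, depend analytically on $E$ and continuously on $V$ in the compact-open topology: this is because $\psi_\pm$ are obtained from the transfer matrices $A[V](E,0,t)$ (which depend continuously on $V$ uniformly on compact $t$-intervals, since the defining ODE has coefficients depending continuously on $V$) by selecting the $L^2$ solution at $\pm\infty$, and the latter selection is itself continuous in $V$ because the disk of Weyl $m$-values at a point $t$ shrinks to a point as $t\to\pm\infty$ at a rate controlled only by $\Im E$ and the sup-norm bound on $V$. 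The whole-line Green's function $G_V(E;t,s)=\psi_-(E,\min)\psi_+(E,\max)/W$ (with $W$ the Wronskian) then depends continuously on $V$, uniformly for $E$ in compact subsets of $\C_+$, and the spectral measure $\mu_V$ (say, the trace-normalized spectral measure, or the measure associated to $\delta_0$-type vectors — whichever normalization the paper fixes) is recovered from the boundary behavior of $\Im G_V$ via the Borel/Stieltjes transform.

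The key steps, in order, are: (i) fix a relatively compact neighborhood $K\subset\C_+$ and show $m_\pm^{(n)}(E)\to m_\pm(E)$ uniformly on $K$, using the uniform bound $\sup_n\|V^{(n)}\|_\infty<\infty$ to get a $V$-independent rate for the Weyl disk shrinkage and the local-uniform convergence $V^{(n)}\to V$ to pass to the limit in the transfer matrices on a large but fixed $t$-interval; (ii) deduce that the Borel transforms $F_n(E)=\int d\mu_{V^{(n)}}(x)/(x-E)\to F(E)$ pointwise (indeed locally uniformly) on $\C_+$, since $F_n$ is an explicit continuous function of $m_\pm^{(n)}$; (iii) note that the measures $\mu_{V^{(n)}}$ are uniformly bounded on compact energy intervals — e.g.\ from the a priori bound $\Im F_n(E)\le C/\Im E$ coming from the uniform potential bound, which controls $\mu_{V^{(n)}}([-\!R,R])$ — hence any subsequence has a vaguely convergent sub-subsequence; (iv) identify the vague limit: if $\mu_{V^{(n_k)}}\to\nu$ vaguely then its Borel transform equals $\lim F_{n_k}=F=$ Borel transform of $\mu_V$, and by injectivity of the Borel/Stieltjes transform $\nu=\mu_V$; (v) since every subsequence has a sub-subsequence converging vaguely to $\mu_V$, the full sequence converges, which for uniformly (locally) finite measures and compactly supported continuous test functions $\phi$ is exactly the asserted $\int\phi\,d\mu_{V^{(n)}}\to\int\phi\,d\mu_V$.

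The main obstacle I expect is step (i): establishing that the selection of the $L^2$ Weyl solution — equivalently the convergence of the nested Weyl disks $D_t^{(n)}(E)\to\{m^{(n)}_\pm(E)\}$ — is \emph{uniform in $n$}, so that one may first fix $t$ large (making all disks small) and only afterwards let $n\to\infty$ (making $D_t^{(n)}$ converge to $D_t$). This is where the hypothesis $\sup_n\|V^{(n)}\|_\infty<\infty$ is essential and non-negotiable: the radius of $D_t(E)$ is bounded by a quantity of the form $C(\|V\|_\infty,\Im E)\,e^{-c(\Im E)\,t}$, and uniform boundedness of the potentials gives a single such bound valid for all $n$. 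Once this uniform disk-shrinkage is in hand, the remaining steps are routine: local-uniform convergence of transfer matrices is immediate from continuous dependence of ODE solutions on coefficients, the Borel transform is an explicit rational expression in $m_+$ and $m_-$, and the uniqueness of the vague limit via the Stieltjes inversion formula is standard. One should also take a moment to confirm that the normalization of $\mu_V$ used elsewhere in the paper (the spectral measure "for the line Schr\"odinger operator") is one of these $m$-function-expressible measures; if it is the sum of the canonical spectral measures of $\delta_0,\delta_0'$ or a trace measure, the same argument applies verbatim with the appropriate rational combination of $m_\pm$.
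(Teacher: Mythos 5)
The paper does not actually prove Lemma \ref{blacont}: it is stated as a ``basic continuity property'' and used as a known fact, so there is nothing to compare line by line. Your route --- continuity of the half-line Weyl functions $m_\pm$ under local uniform convergence of the potential, hence locally uniform convergence of the Herglotz (Borel) transforms of the canonical whole-line spectral measures, hence vague convergence by the usual compactness-plus-Stieltjes-inversion argument --- is the standard proof and is essentially correct; it also matches the normalization the paper uses, since $\mu_V$ is exactly the $m_\pm$-expressible canonical measure (in the periodic case its density is $1/\Im u[V](E,s)$).

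Two points deserve correction, though neither is fatal. First, the step you single out as the main obstacle, a disk-shrinkage rate \emph{uniform in $n$}, is not needed: since $m^{(n)}_\pm(E)$ lies in the Weyl disk $D^{(n)}_b(E)$ for \emph{every} $b$ (nesting), it suffices to fix $b$ so large that $D_b(E)$ for the limit potential $V$ has radius $<\epsilon$, and then use convergence of the transfer matrices on $[0,b]$ to get $D^{(n)}_b(E)$ Hausdorff-close to $D_b(E)$; this already forces $m^{(n)}_\pm(E)\to m_\pm(E)$. So uniform boundedness is not ``non-negotiable'' at that step (it is of course harmless, and your quantitative bound can be proved, but it is extra work you do not need). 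Second, in step (iii) the a priori bound $\Im F_n(E)\le C/\Im E$ is false as stated: the whole-line canonical spectral measure has infinite total mass (already for $V=0$ the density is comparable to $E^{-1/2}$ at $+\infty$), and a uniform bound of that form would force uniformly finite total mass. What you actually need is a \emph{local} mass bound, e.g. $\mu_{V^{(n)}}([-R,R])\le (1+R^2)\,\Im F_n(i)$, and this follows at once from the convergence $F_n(i)\to F(i)$ you have already established in step (ii). With these two adjustments the argument goes through as you outline it.
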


We will only need explicit formulas for the spectral measure in the case of
periodic potentials.  Let $V:\R/T\Z \to \R$ be continuous, and denote its
shifts by $V_s:t \mapsto V(s+t)$.  Then $\mu_{V_s}$
is absolutely continuous and
\be
\frac {d} {dE} \mu_{V_s}=\frac {1} {\Im u[V](E,s)}.
\ee

For $C>0$, let $\mu_{V_s,C}$ be the restriction of $\mu_{V_s}$
to the set of $E$
with $|\frac {d} {dE} \mu_{V_s}|<C$.  We say that a periodic $v$ is
$(\epsilon,C,M)$-uniform if
$\mu_{V_s}(-\infty,M]-\mu_{V_s,C}(-\infty,M]<\epsilon$ for every $s$.

We clarly have:

\begin{lemma} \label {blaunif1}

For every periodic $V$, $\epsilon>0$, $M>0$, there exists $C>0$ such that
$V$ is $(\epsilon,C,M)$-uniform.

\end{lemma}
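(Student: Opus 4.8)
The plan is to derive this from Dini's theorem. Fix a periodic $V$ of period $T$ and write $\rho_s(E)=\frac{d}{dE}\mu_{V_s}=\frac{1}{\Im u[V](E,s)}$ for $E\in\Omega(V)$; since $u[V](\cdot,s)$ is $T$-periodic in $s$, it suffices to treat $s$ in the compact circle $\R/T\Z$. Set
\[
h(s)=\mu_{V_s}(-\infty,M]=\int_{-\infty}^{M}\rho_s(E)\,dE,\qquad
h_C(s)=\mu_{V_s,C}(-\infty,M]=\int_{\{E\le M\,:\,\rho_s(E)<C\}}\rho_s(E)\,dE .
\]
These are finite because $\mu_{V_s}$ is locally finite and $(-\infty,M]$ meets only finitely many bands of $\Sigma(V)$. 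Since $\rho_s(E)<\infty$ on the full-$\mu_{V_s}$-measure set $\Omega(V)$, monotone convergence gives $h_C(s)\uparrow h(s)$ as $C\to\infty$ for every $s$. The quantity to be made $<\epsilon$ uniformly in $s$ is precisely $h(s)-h_C(s)$, so once we know that each $h_C$ and $h$ is continuous on $\R/T\Z$, Dini's theorem forces $\sup_s\big(h(s)-h_C(s)\big)\to 0$ as $C\to\infty$, and we take $C$ with $\sup_s\big(h(s)-h_C(s)\big)<\epsilon$.

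For the continuity of $h$: as $s\to s_0$ the shifted potentials $V_s$ converge to $V_{s_0}$ uniformly on $\R$ (as $V$ is continuous and periodic), so $\mu_{V_s}\to\mu_{V_{s_0}}$ vaguely by Lemma~\ref{blacont}. All $\mu_{V_s}$ are supported in $[\inf V,\infty)$, so no mass escapes to $-\infty$, and $\mu_{V_{s_0}}$ is absolutely continuous, so $\mu_{V_{s_0}}(\{M\})=0$; sandwiching $\mathbf{1}_{(-\infty,M]}$ between compactly supported continuous functions whose difference carries arbitrarily little $\mu_{V_{s_0}}$-mass then yields $h(s)\to h(s_0)$.

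For the continuity of $h_C$: the integrand $\rho_s(E)\mathbf{1}_{\{\rho_s(E)<C\}}$ is dominated by the fixed integrable function $C\,\mathbf{1}_{[\inf V,M]}$. As $s\to s_0$ we have $\rho_s(E)\to\rho_{s_0}(E)$ for every $E\in\Omega(V)$ (continuity of $E\mapsto A[V](E,s)$ and of $\u$), hence $\rho_s(E)\mathbf{1}_{\{\rho_s(E)<C\}}\to\rho_{s_0}(E)\mathbf{1}_{\{\rho_{s_0}(E)<C\}}$ for every $E$ outside the Lebesgue-null set $\partial\Omega(V)\cup\{E:\rho_{s_0}(E)=C\}$ — null because $\rho_{s_0}$ is real-analytic and nonconstant on each band (e.g. since $E\mapsto u[V](E,s_0)$ is a nonconstant Herglotz function, or directly because $\tr A[V](E)$ varies across each band). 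Dominated convergence gives $h_C(s)\to h_C(s_0)$, and the proof is complete.

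The only real content is the two continuity assertions, and inside them the substantive points are that $\mu_{V_s}$ is locally finite (so $h$ is real-valued), that no mass escapes to infinity or concentrates at $E=M$, and that the level set $\{\rho_{s_0}=C\}$ is Lebesgue-null; everything else is soft. One could instead bypass Dini by exhibiting a single majorant $\Phi\in L^{1}(-\infty,M]$ with $\rho_s\le\Phi$ for all $s$ — for instance $\Phi(E)=2K_0^{2}\cosh d(u[V](E),i)$ with $K_0=\sup\{\|A[V](E,0,s)\|:E\le M,\ 0\le s\le T\}$, using $\Im(g\cdot z)=\Im z/|cz+d|^2$ — but verifying $\Phi\in L^1$ forces an analysis of $u[V](E)$ near each of the finitely many band edges (in particular the edges where $u[V](E)\to\infty$, where $\Phi$ genuinely blows up, though only like the integrable power $|E-b|^{-1/2}$), which is more laborious.
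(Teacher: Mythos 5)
Your overall strategy (Dini's theorem on the compact circle $\R/T\Z$, after checking continuity in $s$ of $h$ and of the truncated masses) is sound, and since the paper states Lemma \ref{blaunif1} without proof, the only issue is whether your argument is complete. It almost is, but one step is not justified by the reasons you give: the Lebesgue-nullity of the level set $\{E\le M:\rho_{s_0}(E)=C\}$, which you reduce to the assertion that $\rho_{s_0}=1/\Im u[V](\cdot,s_0)$ is nonconstant on each band. Real-analyticity on each component of $\Omega(V)$ is fine, but neither of your parenthetical reasons yields nonconstancy: a nonconstant Herglotz function can perfectly well have constant imaginary part on an interval (e.g.\ $E\mapsto E+i$), and the variation of $\tr A[V](E)$ only controls the rotation angle $\Theta(A[V](E))$, not the fixed point --- a family of the form $B\,R_{\theta(E)}\,B^{-1}$ with $B$ fixed has strictly varying trace and constant fixed point. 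So, as written, this is the one genuine gap.

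It is repairable in at least two ways. (i) Prove nonconstancy directly: if $\Im u[V](\cdot,s_0)$ were constant on a component $I$ of $\Omega(V)$, look at an endpoint $E_0$ of $I$. Either $A[V](E_0)$ is parabolic and different from $\pm\id$ (this is the case at every open gap edge and at $\inf\Sigma(V)$, where the periodic ground state is simple), in which case $u[V](E,s_0)$ tends to $\R\cup\{\infty\}$ as $E\to E_0$ in $I$, so $\Im u$ tends to $0$ or $\infty$, contradicting constancy; or $A[V](E_0)=\pm\id$ (a closed gap, necessarily interior to $\Sigma(V)$), in which case $u[V](\cdot,s_0)$ extends analytically across $E_0$ and the constancy propagates to the adjacent component; iterating downwards one eventually hits an endpoint of the first kind, since $\Sigma(V)$ is bounded below. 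This works, but it is precisely the band-edge analysis you were hoping to avoid. (ii) Cheaper: sidestep level sets altogether by running Dini with the continuous truncation $g_C(s)=\int_{(-\infty,M]}\min(\rho_s(E),C)\,dE$, which is continuous in $s$ by dominated convergence with no exceptional set and increases pointwise to $h$; since $\rho_s\le 2(\rho_s-C/2)$ on $\{\rho_s\ge C\}$, one gets $h(s)-h_C(s)\le 2\bigl(h(s)-g_{C/2}(s)\bigr)$, and the uniform smallness you need follows from Dini applied to $g_{C}$. With either repair your proof is complete; the remaining steps (local finiteness, the sandwich argument for continuity of $h$ via Lemma \ref{blacont} and absence of an atom at $M$, and dominated convergence for $h_C$) are correct.
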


\subsection{Weak mixing}

Let $F'_t:S' \to S'$ be a time-change of a periodic solenoidal flow.
We say that a time-change $F_t:S \to S$ of a solenoidal flow is
$(N,F')$-mixed if
$S$ projects onto $S'$ (through $p$), and for every
$1 \leq j \leq N$, there exists $t_j>0$ and compact subsets
$U_j,V_j \subset S$ with Haar measure
strictly larger than $1/3$, such that for each $x \in U_j$ there exists
$|t|<\frac {1}
{N}$ such that $p(F_{t_j}(x))=F'_t(p(x))$, and for each $x \in V_j$, there
exists $|t-\frac {j} {N}|<\frac {1} {N}$ such that
$p(F_{t_j}(x))=F'_t(p(x))$.

\begin{lemma} \label {blamixed1}

Let $F$ be $(N,F')$-mixed.  Then there exists $\kappa>0$ such
that if $\tilde F$ is $\kappa$-close to the lift of
$F$ then $\tilde F$ if $(N,F')$ mixed.

\end{lemma}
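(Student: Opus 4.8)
The plan is to split the statement into two routine facts: that being $(N,F')$-mixed is inherited by lifts, and that it is stable under $C^0$-small perturbations of the speed of a time-change on a fixed solenoid. Granting these, let $q\colon \tilde S\to S$ be the projection witnessing that $\tilde F$ is $\kappa$-close to the lift of $F$, so $|\ln w_F\circ q-\ln w_{\tilde F}|\le\kappa$, and let $\hat F$ be the lift of $F$ to $\tilde S$, i.e.\ the time-change of $F^{\tilde S}$ with speed $w_F\circ q$. Then $\tilde S$ projects onto $S'$ via $\tilde p:=p\circ q$, and $\hat F$ is again $(N,F')$-mixed: one keeps the same times $t_j$ and takes $\hat U_j:=q^{-1}(U_j)$, $\hat V_j:=q^{-1}(V_j)$, which have the same Haar measure as $U_j,V_j$ because $q$ pushes normalized Haar measure forward to normalized Haar measure, and which satisfy the required relations because $q\circ\hat F_t=F_t\circ q$ and hence $\tilde p\circ\hat F_t=p\circ F_t\circ q$. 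It remains to compare $\tilde F$ with $\hat F$.

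This comparison is the heart of the matter, and it is the usual estimate for two time-changes $\tilde F,\hat F$ of one and the same solenoidal flow $F^{\tilde S}$ whose speeds differ by a factor in $[e^{-\kappa},e^{\kappa}]$ and are uniformly bounded (by $e^{\kappa}\|w_F\|_{C^0}$): for each $\tilde x\in\tilde S$ and each $t$ in the bounded range $[0,T_*]$, $T_*:=\max_{1\le j\le N}t_j$, the points $\tilde F_t(\tilde x)$ and $\hat F_t(\tilde x)$ lie on a common $F^{\tilde S}$-orbit and are separated along it by an amount of base-displacement at most $C\kappa$, where $C$ depends only on $T_*$ and $\|w_F\|_{C^0}$. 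Pushing forward by $\tilde p$, which intertwines $F^{\tilde S}$ with $F^{S'}$ (and $F'$-orbits are $F^{S'}$-orbits), that extra base-displacement becomes an extra amount of $F'$-time of size at most $C'\kappa$, with $C'$ depending in addition on $\inf w_{F'}$. Hence for every $\tilde x$ one gets $\tilde p(\tilde F_{t_j}(\tilde x))=F'_{t'}\big(\tilde p(\hat F_{t_j}(\tilde x))\big)$ with $|t'|\le C'\kappa$; combining with the relations already available for $\hat F$ gives, for $\tilde x\in\hat U_j$, the identity $\tilde p(\tilde F_{t_j}(\tilde x))=F'_{t+t'}(\tilde p(\tilde x))$ with $|t|<1/N$, and likewise for $\tilde x\in\hat V_j$ with $|t-j/N|<1/N$.

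The only nuisance is that the strict inequalities $|t|<1/N$ in the definition of $(N,F')$-mixed are pointwise in $x$ and so not obviously uniform; I would dispose of this at the outset. Replace each $U_j$ (resp.\ $V_j$) by the set of points $x$ for which some admissible $t$ satisfies $|t|\le 1/N-\delta$ (resp.\ $|t-j/N|\le 1/N-\delta$). These sets are closed, since the condition that $p(F_{t_j}(x))$ lie in the compact arc $\{F'_t(p(x)):|t|\le 1/N-\delta\}$ is closed in $x$, and they increase to $U_j$ (resp.\ $V_j$) as $\delta\downarrow 0$; hence for some fixed $\delta>0$ they still have Haar measure strictly larger than $1/3$. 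With this $\delta$ chosen, take $\kappa$ small enough that $C'\kappa<\delta$; then $|t+t'|<1/N$ and $|t+t'-j/N|<1/N$, so $\tilde F$ is $(N,F')$-mixed with data $t_j$, $\tilde U_j:=\hat U_j$, $\tilde V_j:=\hat V_j$. I expect the main (and mild) obstacle to be making the time-change comparison of the middle paragraph quantitatively airtight; the rest is soft measure theory together with the fact that projections of solenoids conjugate the solenoidal flows.
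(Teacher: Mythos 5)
Your proof is correct: lifting the witnessing sets $U_j,V_j$ through $q$, comparing the two time-changes of $F^{\tilde S}$ over the finitely many bounded times $t_j$ to get an $F'$-time discrepancy of size $O(\kappa)$, and pre-shrinking the sets by a fixed $\delta>0$ (using inner regularity and closedness of the $|t|\le 1/N-\delta$ condition) to absorb the strict inequalities is exactly the routine argument the situation calls for. The paper states this lemma without proof, treating it as evident, and your write-up supplies precisely the verification that is being taken for granted.
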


\begin{lemma} \label{blamixed2}

Let $F$ be the projective limit of $F^{(n)}$, and assume that for every
$N \in \N$, for every $n$ sufficiently large, $F$ is
$(N,F^{(n)})$-mixed.  Then $F$ is weak mixing.

\end{lemma}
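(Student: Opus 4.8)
The plan is to use the standard fact that weak mixing of $F$ is equivalent to the absence of non-constant $L^2(S,\mu)$-eigenfunctions of the Koopman group $U_tf=f\circ F_t$; that is, I must show that if $\phi\in L^2(S,\mu)$ is non-zero with $\phi\circ F_t=e^{2\pi i\lambda t}\phi$ for all $t$, then $\lambda=0$ and $\phi$ is a.e.\ constant. The case $\lambda=0$ is immediate, since a time-change of a solenoidal flow is uniquely ergodic, hence ergodic; so I may assume $\lambda\ne0$, normalize $|\phi|\equiv1$, and observe that $F$-invariance of $\mu$ gives $\int\phi\,d\mu=e^{2\pi i\lambda t}\int\phi\,d\mu$, whence $\int\phi\,d\mu=0$. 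The goal is to derive a contradiction from the $(N,F^{(n)})$-mixed hypotheses.

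First I would pass to a fixed finite level. Since $S$ is the projective limit of the $S^{(n)}$ and the $p_n\colon S\to S^{(n)}$ separate points, the $\sigma$-algebras $p_n^{-1}\mathcal B(S^{(n)})$ increase to $\mathcal B(S)$ modulo $\mu$, so the conditional expectations $\mathbb E_\mu[\phi\mid p_n^{-1}\mathcal B(S^{(n)})]$ converge to $\phi$ in $L^2$. Fixing small $\epsilon>0$, I choose $m$ large enough that $\|\phi-h\circ p_m\|_{L^2(\mu)}<\epsilon$, where $h$ is the level-$m$ conditional expectation viewed as a function on $S^{(m)}$, and large enough that $F^{(n)}$ is $\epsilon$-close to the lift of $F^{(m)}$ for all $n\ge m$ (the tower being built with summable closenesses). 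Since $F^{(m)}$ is a \emph{periodic} time-change of a solenoidal flow, $S^{(m)}$ is a circle and $F^{(m)}$ is $C^1$-conjugate to a constant-speed flow $\theta\mapsto\theta+t$ on $\R/T_m\Z$; I identify $h$ with a function there. The invariant measure of a time-change of a solenoidal flow has continuous positive density with respect to Haar, so $(p_m)_*\mu$ has density bounded between positive constants and $L^2$-norms pulled back through $p_m$ are comparable to Haar ones; after truncating the Fourier series of $h$ (at the cost of doubling $\epsilon$) I may take $h$ band-limited, so that $\|h\circ F^{(m)}_s-h\|_\infty\le C_h|s|$ with $C_h$ depending only on $h$ and $\epsilon$. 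Also $|h|\le1$, $\|h\|_{L^2}\ge1-\epsilon$, $\int h\,d(p_m)_*\mu=0$.

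Next I would invoke the mixed property. Choose $N$ with $C_h/N<\epsilon$ and then $n\ge m$ with $F$ being $(N,F^{(n)})$-mixed, and let $t_j,U_j,V_j$ ($1\le j\le N$) be as in the definition. Combining $\phi\circ F_{t_j}=e^{2\pi i\lambda t_j}\phi$ with the level-$m$ approximation and $F$-invariance of $\mu$ gives
\[
\bigl\|\,h\circ p_m-e^{-2\pi i\lambda t_j}\,h\!\left(p_m(F_{t_j}\cdot)\right)\bigr\|_{L^2(\mu)}<2\epsilon .
\]
For $x\in U_j$ one has $p_n(F_{t_j}x)=F^{(n)}_{\tau}(p_nx)$ with $|\tau|<1/N$; projecting once more down to $S^{(m)}$ and using closeness of $F^{(n)}$ to the lift of $F^{(m)}$ over the bounded time $\tau$ turns this into $p_m(F_{t_j}x)=F^{(m)}_{\tau'(x)}(p_mx)$ with $|\tau'(x)|\le1/N+O(\epsilon)$, hence $h(p_m(F_{t_j}x))=h(p_mx)+O(C_h/N+\epsilon)$. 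Since $\mu(U_j)>1/3$ and $\|h\circ p_m\|_{L^2(\mu)}\ge1-\epsilon$ force $\int_{U_j}|h\circ p_m|^2\,d\mu\ge(1-\epsilon)^2-2/3>1/4$, I get $|1-e^{-2\pi i\lambda t_j}|\le C\epsilon$ for every $j$. Running the same computation on $V_j$, where the displacement is $F^{(m)}_{\tau'(x)}$ with $\tau'(x)=j/N+O(1/N)+O(\epsilon)$, and using $|1-e^{-2\pi i\lambda t_j}|\le C\epsilon$, yields $\bigl\|\,h(F^{(m)}_{j/N}(p_m\cdot))-h(p_m\cdot)\bigr\|_{L^2(\mu|_{V_j})}\le C\epsilon$ with $\mu(V_j)>1/3$, for all $1\le j\le N$.

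The hard part will be extracting the contradiction from these estimates. Keeping $\epsilon$ fixed and letting $N$ (hence $n$) tend to infinity, the displacements $j/N$ sweep out a finer and finer net of $[0,1]$, and averaging $\|h\circ F^{(m)}_{j/N}-h\|^2$ over $j$ isolates the part of $h$ at circle-frequencies $|k|/T_m$ bounded below — the genuinely oscillating part, which carries a definite fraction of $\|h\|^2$ because $h$ is mean-zero with $\|h\|\approx1$. The trouble is that the bound above controls this quantity only on the sets $V_j$, which have $\mu$-measure only slightly above $1/3$ and whose $p_m$-images need not be Haar-typical; so the contradiction must be forced by combining (i) the uniform lower bound $1/3$ on $\mu(U_j),\mu(V_j)$, (ii) the two-sided density bounds of $(p_m)_*\mu$ relative to Haar, (iii) the availability of the entire $1/N$-net of displacements with $N\to\infty$, and (iv) the normalization $\int h=0$, $\|h\|\approx1$, to conclude $\|h\|_{L^2}$ is small — impossible. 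This done, $\lambda=0$, so by ergodicity $\phi$ is constant, and $F$ is weak mixing. The remaining ingredients — the quantifier order ($\epsilon\to0$, then $N$, then $n$), the band-limited truncation, the transfer between consecutive levels of the tower using only $C^0$-closeness of the time-changes and the resulting control on reparametrization over bounded times, and comparability of $L^2$ norms under $p_m$ — are routine once that order is respected.
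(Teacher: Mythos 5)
Your reduction to a finite level and your use of the sets $U_j$ is in the spirit of the paper's proof, and the conclusion you extract from $U_j$ (that $\lambda t_j$ is nearly an integer) is exactly the paper's first step. But the argument has a genuine gap at the end, and it is not merely a technical one: after dividing out the factor $e^{-2\pi i\lambda t_j}$, the only information you retain from $V_j$ is that $h\circ F^{(m)}_{j/N}\approx h$ in $L^2(\mu|_{V_j})$, and you then hope to contradict $\|h\|\approx 1$, $\int h\,d\mu=0$ using ingredients (i)--(iv). This cannot work: take $h$ to be (essentially) the character $e^{2\pi i s/T_m}$ in the constant-speed coordinate on $\R/T_m\Z$. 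It is non-constant, mean-zero, of norm $1$, yet since the period $T_m$ of $F^{(m)}$ is huge in the construction, $h\circ F^{(m)}_{s}-h$ is uniformly tiny for all $|s|\leq 1$, so every estimate you derived from $U_j$ and $V_j$ is satisfied with negligible error and no contradiction follows. Your parenthetical claim that the mean-zero normalization forces a definite fraction of $\|h\|^2$ to sit at circle-frequencies bounded below is false: mean-zero only removes the $k=0$ mode, and the dangerous modes are the nonzero ones of frequency $k/T_m$ with $k$ small, which your displacements $j/N\in(0,1]$ cannot see.

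What is missing is precisely the information you discarded: that $h$ (or rather the conditional expectation $\psi^{(n)}$ of the eigenfunction) approximately satisfies the eigenvalue equation at the finite level, uniformly over bounded times. This is the key step in the paper's proof: one shows
\begin{equation}
\lim_{n \to \infty} \sup_{x \in S^{(n)}} \sup_{0 \leq t \leq 1}
\bigl|\psi^{(n)}(F^{(n)}_t(x))-e^{2 \pi i \theta t} \psi^{(n)}(x)\bigr|=0,
\end{equation}
using that $t\mapsto\psi^{(n)}(F^{(n)}_t(\cdot))$ is continuous uniformly in $t$ and $n$ because $\psi$ is an eigenfunction, together with martingale convergence. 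Once you have this, the $V_j$ condition is used not to prove near-invariance of $h$ but to compare phases: on $U_j$ the displacement at the finite level is $O(1/N)$, giving $\theta t_j$ close to $\Z$, while on $V_j$ it is $j/N+O(1/N)$, giving $\theta\bigl(t_j-\frac{j}{N}\bigr)$ close to $\Z$; subtracting, $\theta\frac{j}{N}$ is close to $\Z$ for every $1\leq j\leq N$, and letting $N\to\infty$ forces $\theta=0$ with no Fourier analysis of $h$ at all. So you should replace your band-limiting of $h$ and the attempted $\|h\|$-smallness contradiction by the approximate eigenvalue relation above and the phase comparison; as written, the final step of your proposal would fail.
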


\begin{proof}

If $F$ is not weak mixing, then there exists a non-trivial
eigenfunction, i.e., a
measurable function $\psi:S \to S^1$ such that $\psi \circ F_t=e^{2 \pi i
\theta t} \psi$ for some $\theta \in \R \setminus \{0\}$.  Let
$\psi^{(n)}:S^{(n)} \to \C$ be the expected value of $\psi$ on
$p_{S,S^{(n)}}^{-1}(x)$ (with respect to the Haar measure on $S$).  Then
$\psi^{(n)} \circ p_{S,S^{(n)}}$ converges to $\psi$ almost everywhere
(Martingale Convergence Theorem).

Since $\psi$ is an eigenfunction,
$t \mapsto \psi^{(n)}(F^{(n)}_t(0))$ is continuous, uniformly on $t$
and $n$.

By the definition of projective limit,
\be
\lim_{n \to \infty} \sup_{x \in S^{(n)}} \sup_{0 \leq t \leq 1}
|\psi^{(n)}(F^{(n)}_t(x)-e^{2 \pi i \theta t} \psi^{(n)}(x)|=0.
\ee

Thus, for $x \in U_j$,
$\psi^{(n)}(p_{S,S^{(n)}}(F_{t_j}(x)))$ is close to
$\psi^{(n)}(p_{S,S^{(n)}}(x))$.  Since the first is close to
$\psi(F_{t_j}(x))$ and the second is close to $\psi(x)$ for most
$x \in U_j$, this shows that $\theta t_j$ is close to an integer.

A similar argument using $V_j$, shows that
$\theta (t_j-\frac {j} {N})$ is close to an integer, so that
$\theta \frac {j} {N}$ is close to an integer.
Since $1 \leq j \leq N$ is arbitrary, we conclude that $\theta=0$.
\end{proof}

\subsection{The construction}

Let $V:\R/T \Z \to \R$ be a continuous function with $V(0)=0$.
For $\delta>0$, $n \in \N$, the $(\delta,n)$-padding (a simplified version
of a $(\delta,N,n)$-padding) of
$V$ is the continuous function $V':\R/T' \Z \to
\R$, $T'=2 n T+\delta n$, given by the following conditions:
\begin{enumerate}
\item $V'(t)=V(t-a_j)$, $a_j \leq t \leq a_j+T$, $0 \leq j \leq 2n-1$,
\item $V'(t)=0$, $a_j+T \leq t \leq a_{j+1}$, $0 \leq j \leq 2n-1$,
\item $a_j=j T$, $0 \leq j \leq n$, $a_j=j T+(j-n) \delta$, $n+1 \leq j \leq
2n$.
\end{enumerate}

\begin{lemma} \label {blamixed}

Let $F,v,V,\epsilon_0$ be as in Lemma \ref {bla25}.
Then for every $\delta>0$ sufficiently small, for every $N \in \N$,
for every $n$ sufficiently large,
the $(\delta,n)$-padding $V'$ of $V$ has the form
$V'(t)=v'(F'_t(0))$, where $F'_t:S' \to S'$ is a $T'$-periodic
time-change of a solenoidal flow, $v':S' \to \R$ is continuous,
$(F',v')$ is $\frac {\delta} {\epsilon_0}$-close to a lift
of $(F,v)$, and $(F',v')$ is $(N,F)$-mixed.

\end{lemma}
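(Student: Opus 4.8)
The plan is to build $(F',v')$ by exactly the recipe used for Lemma~\ref{bla25}, and then to verify $(N,F)$-mixedness by tracking how the time-change makes nearby $F'$-orbits slide relative to the $F$-dynamics.

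Since a $(\delta,n)$-padding is the special case of a $(\delta,N,n)$-padding in which every inserted gap has length $\delta$, the first part is handled exactly as in Lemma~\ref{bla25}: let $S'$ be the $2n$-cyclic cover of $S$, with projection $p\colon S'\to S$; set $v'=v\circ p$; let $U_0=\{F_t(0):T-\epsilon_0<t<T\}$ be the terminal zero block of $S$ (whose $F$-crossing time is $\epsilon_0$); and let $w_{F'}=e^{\rho}\,(w_F\circ p)$, where $\rho\le0$ is continuous, supported on the components of $p^{-1}(U_0)$ lying over the copies numbered $n,\dots,2n-1$, and chosen so that the $F'$-time needed to cross each of those components is $\epsilon_0+\delta$ instead of $\epsilon_0$. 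One can keep $\|\rho\|_{C^0}<\ln(1+\delta/\epsilon_0)<\delta/\epsilon_0$, so $(F',v')$ is $\tfrac{\delta}{\epsilon_0}$-close to the lift of $(F,v)$, and by inspection $V'(t)=v'(F'_t(0))$ is the $(\delta,n)$-padding of $V$: the orbit through $0$ runs through the $2n$ copies of $V$ in order, slowed only on the terminal zero blocks of the last $n$ of them. (Replacing $\epsilon_0$ by a smaller admissible value if necessary, I assume $\epsilon_0$ is small compared with $T$; this only weakens the closeness constant, which is harmless.)

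For mixedness, the point is that for $x\in S'$ one has $p(F'_t(x))=F_{\tau(x,t)}(p(x))$, where $\tau(x,t)=t-(\text{total lag accumulated up to time }t)$: in the ``clear'' part of $S'$ (the first $n$ copies, together with the active parts of the last $n$) the flow is unchanged and $\tau$ advances at unit speed, while each time the orbit crosses one of the $n$ slowed zero blocks the lag grows by exactly $\delta$. In the spatial coordinate on $S'\cong\R/(2nT)\Z$ the slowed blocks form a single cluster, near $(n+1)T,(n+2)T,\dots,2nT$; and since $F$ has period $T$, $p(F'_t(x))$ depends only on $\tau(x,t)$ modulo $T$, i.e.\ on the accumulated lag modulo $T$. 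Fix $j\in\{1,\dots,N\}$ and choose a positive integer $r=r_j$ with $r\delta$ within $1/N$ of $-j/N$ modulo $T$; this is possible, with $r<n/10$ say, once $\delta$ is small (in particular $\delta<1/N$) and $n$ is large (in particular $n>10T/\delta$). Put $t_j=rT$. Let $U_j$ be the set of $x$ whose forward $F'$-orbit over time $t_j$ never meets a slowed block: this is, up to an $o(1)$ error, an arc of $S'$ of relative measure tending to $\tfrac12$, hence of measure $>\tfrac13$, and on it the lag vanishes, so $p(F'_{t_j}(x))=F_{rT}(p(x))=p(x)=F_0(p(x))$ with $|t|=0<1/N$. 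Let $V_j$ be the set of $x$ whose forward orbit over time $t_j$ crosses exactly $r$ slowed blocks and neither begins nor ends inside one: because the cluster occupies about half of $S'$, this set again has measure tending to $\tfrac12$ (the ``clean'' exclusions costing only $O(\epsilon_0/T)$, negligible since $\epsilon_0$ is small), hence measure $>\tfrac13$, and on it the lag is exactly $r\delta$, so $p(F'_{t_j}(x))=F_{rT-r\delta}(p(x))=F_t(p(x))$ with $t\equiv-r\delta\pmod T$, hence $|t-j/N|<1/N$. Since $p\colon S'\to S$ is the required projection, this is exactly the statement that $(F',v')$ is $(N,F)$-mixed.

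The main difficulty is the bookkeeping in the last paragraph: one must show that, because the $n$ slowed blocks are concentrated in a single cluster filling roughly half of $S'$, both events ``the orbit of length $t_j$ avoids the cluster entirely'' and ``it crosses exactly $r$ blocks of the cluster, cleanly'' have measure bounded below by a constant exceeding $\tfrac13$, uniformly as $n\to\infty$; and one must simultaneously arrange $r<n$ with $r\delta$ landing within $1/N$ of the prescribed residue $-j/N\pmod T$, which is what dictates the order of quantifiers ($\delta$ small in terms of $N$, then $n$ large in terms of $T/\delta$). Everything else is routine tracking of the time-change.
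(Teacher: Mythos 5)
Your construction of $(F',v')$ coincides with the paper's: $2n$-cover, $v'=v\circ p$, a non-positive $\rho$ supported on the zero blocks of the last $n$ copies stretching each of them by $\delta$, with $\|\rho\|_{C^0}<\delta/\epsilon_0$. The gap is in the verification of $(N,F)$-mixedness, in the set $V_j$. You take $t_j=rT$ with $r\delta$ close to $-j/N$ mod $T$ (so $r\delta\approx T-j/N$), and you claim that the set of $x$ whose orbit of $F'$-duration $rT$ crosses \emph{exactly} $r$ slowed blocks has Haar measure tending to $1/2$. This is false. Inside the slowed half each copy of the base period takes $F'$-time $T+\delta>T$, so an orbit segment of $F'$-duration $rT$ that stays in that half traverses $\lfloor (s+rT)/(T+\delta)\rfloor$ complete copies, where $s\in[0,T+\delta)$ is the starting phase within a copy; this equals $r$ only when $s\ge r\delta$, and combined with a clean start ($s<T-\epsilon_0$) the admissible phases form a window of length at most $T-\epsilon_0-r\delta\approx j/N-\epsilon_0$, a small fraction of $T+\delta$ and empty as soon as $\epsilon_0\ge j/N$. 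Typical points of the slowed half cross only $r-1$ blocks, so their lag is about $(r-1)\delta$, not $r\delta$. Hence your $V_j$ does not have measure $>1/3$ (it can even be empty), and the shift you compute on it is not the one most points acquire; the phrase ``because the cluster occupies about half of $S'$'' conflates ``the orbit meets slowed blocks'' with ``it crosses exactly $r$ of them''.

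The repair used in the paper avoids this phase dependence entirely: take $t_j$ an exact multiple of the \emph{slowed} period, $t_j=[\tfrac{j}{\delta N}](T+\delta)$. Then every point of the slowed half whose orbit stays there for time $t_j$ projects to exactly the same point of $S$ (this is the zero-shift set, with no condition on the phase), while every point of the unslowed half deep enough gets the exact shift $t_j\equiv[\tfrac{j}{\delta N}]\delta\in[j/N-\delta,\,j/N]\pmod T$; both halves have Haar measure $1/2$ up to $O(r/n)$. If you prefer to keep $t_j=rT$, you must instead take $V_j$ to be essentially the whole slowed half, observe that for all its points the lag lies in $[(r-1)\delta,\,r\delta]$ (when $r\delta+\epsilon_0<T$), and retune $r$ so that this entire interval lies within $1/N$ of $-j/N$ mod $T$. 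With either repair there is no need for $\epsilon_0\ll T$, so you should not shrink $\epsilon_0$; doing so is in any case not harmless for the statement as written, since the lemma asserts $\delta/\epsilon_0$-closeness with the given $\epsilon_0$ and shrinking $\epsilon_0$ only yields the weaker bound $\delta/\epsilon_0'$.
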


\begin{proof}

Let $N_S \in \N$ be the period of the solenoidal flow $F^S_t$.
Define $S'$ as the $2n$-cover of $S$.
Define a continuous function $\rho:S \to \R$ supported on $\{F_t(0),\,
T-\epsilon_0<t<T\}$ such that
$\int_0^T e^{-\rho(F_t(0))} dt=T+\delta$.  As in Lemma \ref {bla25}, we
can choose $\rho$ with $\|\rho\|_{C^0}<\frac {\delta} {\epsilon_0}$.
Let $\rho':S' \to \R$ be defined so that $\rho'=0$ on $[0,n N_S]$
and $\rho'=\rho \circ p_{S',S}$ on $[n N_S,2 n N_S]$.

Let $F'$ be the solenoidal flow with $w_{F'}=e^\rho w \circ p_{S',S}$, and
let $v'=v \circ p_{S',S}$.  All properties, but the last one, follow as in
Lemma \ref {bla25}.  For the last property, notice that if
$t_j=[\frac {j} {\delta
N}] (T+\delta)$ then for $x \in \{F'_s(0),\, 0 \leq s \leq n T-t_j\}$ we have
$p_{S',S} \circ F'_{t_j}(x)=F_{[j/(\delta N)] \delta}(p_{S',S}(x))$,
which belongs to $\{F_s(p_{S',S}(x)),\, \frac {j} {N}-\delta
\leq s \leq \frac {j}
{N}\}$, and for $x \in \{F'_s(0),\, n T \leq s \leq T'-t_j\}$ we have
$p_{S',S} \circ F'_{t_j}(x)=p_{S',S}(x)$.
\end{proof}

\begin{lemma} \label {blaunif}

Let $V:\R/T \Z$ be a continuous function with $V(0)=0$.  If $V$ is
$(\epsilon,C,M)$-uniform then for $\delta>0$
sufficiently small, for every $n \in \N$, if
$V'$ is the $(\delta,n)$-padding of $V$, then $V'$ is
$(\epsilon,C,M)$-uniform.

\end{lemma}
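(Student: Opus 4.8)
The plan is to compare $V'$ with $V$ via the fixed–point function $u$, using the explicit form of the density of a periodic spectral measure: for $\tau$–periodic $P$ and $E\in\Omega(P)$,
\[
\frac{d\mu_{P_t}}{dE}(E)=\frac1{\Im u[P](E,t)}=\frac{2\,|(A[P](E,t,t+\tau))_{21}|}{\sqrt{4-\tr A[P](E)^2}}.
\]
By definition $V'$ is $(\epsilon,C,M)$–uniform precisely when $\int_{\{E\le M:\ \Im u[V'](E,s)<1/C\}}\frac{dE}{\Im u[V'](E,s)}<\epsilon$ for every shift $s$. The corresponding quantity for $V$ is an upper semicontinuous function of the shift on the compact circle $\R/T\Z$, everywhere $<\epsilon$, hence $\le\epsilon-\epsilon_2$ for some $\epsilon_2>0$; that slack is the error budget. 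I will also use that $N_{V'}(M)\to N_V(M)$ and $|(\Sigma(V')\setminus\Sigma(V))\cap(-\infty,M]|\to0$ as $\delta\to0$, uniformly in $n$: if $E$ lies in a gap of $V$ then $A[V](E)$ is hyperbolic, so any product of $2n$ copies of $A[V](E)$ and near–identity factors is hyperbolic unless $E$ is within $O(n^{-2})$ of a band edge of $V$, and the ``ambiguity'' set $\{\,|\tr A[V](E)|\in(2,2+o_\delta(1))\,\}$ has measure $o_\delta(1)$.

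First I would fix a small $\eta>0$ and put $K=\{E\le M:\ |\tr A[V](E)|\le2-\eta\}$, a compact subset of $\Omega(V)$; since $\tr A[V]$ is real-analytic and equals $\pm2$ only at the finitely many band edges in $(-\infty,M]$, the set $(\Sigma(V)\cap(-\infty,M])\setminus K$ has $\mu_{V_\cdot}$–mass $\to0$ as $\eta\to0$, uniformly in the shift. On $K$ the matrix $A[V](E)$ is elliptic with $d(u[V](E),i)$ and $\|\B(A[V](E))^{\pm1}\|$ bounded. The decisive point — and the reason $\delta_0$ may be chosen independent of $n$ — is that $V'$ consists of $n$ translated copies of $V$ and $n$ translated copies of ``$V$ followed by a $\delta$–gap'', so the period transfer matrix of $V'$ based at any copy boundary, after conjugation by $\B(A[V](E))$, is a product of two runs $R_{\theta(E)}^{a}$ and $(\Gamma_\delta(E)R_{\theta(E)})^{b}$ with $a+b=2n$, where $\theta(E)=\Theta(A[V](E))$, $\Gamma_\delta(E)=\B(A[V](E))\,G_\delta(E)\,\B(A[V](E))^{-1}$ with $G_\delta(E)$ the free transfer matrix over length $\delta$, and $\|\Gamma_\delta(E)-\id\|=O_K(\delta)$. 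Since $R_\theta$ and $\Gamma_\delta R_\theta$ are elliptic for $\delta$ small (their traces being $2\cos2\pi\theta+O_K(\delta)$ and $|2\cos2\pi\theta|\le2-\eta$ on $K$) with conjugating matrices bounded on $K$ uniformly in $\delta$, each run stays bounded independently of $a,b,n$; hence $A[V'](E,s,s+T')$ is bounded on $K$ uniformly in $n,s$, and, up to conjugation by bounded matrices, equals $R_{\Phi(E)}+O_K(\delta)$ for a smooth $\Phi$ with $\Phi'(E)\sim2n\,\theta'(E)$ (and $\theta'>0$ on $K$ by Lemma \ref{bla7}). Away from the ``transition zone'' $Z:=\{E\in K:\ |\tr A[V'](E)|>2-\sqrt\delta\}$ one then gets $d(u[V'](E,s),u[V](E,\sigma(s)))=O_K(\delta^{3/4})$, where $\sigma(s)\in\R/T\Z$ is the phase of $s$ inside its copy (for $s$ in a $\delta$–gap, take $\sigma(s)=0$, at the cost of an additional $O(\delta)$). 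Consequently on $K\setminus Z$ the density of $\mu_{V'_s}$ differs from that of $\mu_{V_{\sigma(s)}}$ by a factor $1+o_\delta(1)$ (hyperbolic distance bounds the ratio of imaginary parts), and the $K\setminus Z$ part of $\int_{\{\Im u[V'](E,s)<1/C\}}\frac{dE}{\Im u[V'](E,s)}$ is at most $(1+o_\delta(1))(\int_{\{\Im u[V](E,\sigma(s))<1/C\}}\frac{dE}{\Im u[V](E,\sigma(s))}+o_\delta(1))\le\epsilon-\frac{\epsilon_2}2$ for $\delta$ small, the extra $o_\delta(1)$ absorbing the small shift of the threshold level $1/C$ (using analyticity of $\Im u[V](\cdot,\sigma)$ on each band).

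It remains to bound the contribution to $\int_{\{\Im u[V'](E,s)<1/C\}}\frac{dE}{\Im u[V'](E,s)}$ coming from $Z$ and from $\Sigma(V')\cap((-\infty,M]\setminus K)$ — that is, from the clusters of narrow bands of $V'$ living in the transition zone and near the band edges and gaps of $V$ — and I would show it is $<\epsilon_2/2$. In measure: $Z$ has measure $O_K(\delta^{1/4})$ (since $\Phi$ winds at speed $\sim n$ while $Z$ confines $\Phi$ to within $O(\delta^{1/4})$ of $\tfrac12\Z$); the $\eta$–neighborhoods of $V$'s edges have measure $o_\eta(1)$; and $\Sigma(V')$ meets each gap of $V$ only in a set of measure $O(n^{-2})$ by the parabolicity remark. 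For the mass, I would use the density formula once more together with two observations. First, the entry $c_s(E):=(A[V'](E,s,s+T'))_{21}$ is uniformly small on the transition zone: there $A[V'](E,s,s+T')$ is conjugate by bounded matrices to $R_{\Phi(E)}+O_K(\delta)$ with $R_{\Phi(E)}=\pm\id+O(\delta^{1/4})$, so $|c_s(E)|=O_K(\delta^{1/4})$; near a band edge of $V$ the matrix $A[V](E)$ is parabolic, so its $2n$–th power, hence $A[V'](E,s,s+T')$, is only $O(1)$ on the $O(n^{-2})$–measure part that lies in $\Sigma(V')$. Second, the total ``strength'' of the band-edge singularities is bounded uniformly in $n$: near a band edge $E_*$ of $V'$ the density is $\sim|c_s(E_*)|\,|\beta_{E_*}|^{-1/2}\,|E-E_*|^{-1/2}$ with $\beta_{E_*}=\tfrac12\frac{d}{dE}\tr A[V'](E)|_{E_*}$, so the $\{$density$>C\}$–mass near $E_*$ is $\sim c_s(E_*)^2/(C|\beta_{E_*}|)$, and $\sum_{E_*}|\beta_{E_*}|^{-1}$ is bounded by the sum of the band and gap widths of $V'$ in $(-\infty,M]$, i.e.\ by $M-\inf\Sigma$. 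Splitting this sum according to whether $E_*\in Z$ or $E_*$ is near an edge of $V$, using $|c_s|=O_K(\delta^{1/4})$ in the first case and the bound $o_\eta(1)$ on $\sum|\beta_{E_*}|^{-1}$ over the second, the whole band-edge contribution is $O_K(\delta^{1/2}/C)+o_\eta(1)/C<\epsilon_2/2$ for $\eta$ then $\delta$ small enough. Adding the two parts gives $\int_{\{\Im u[V'](E,s)<1/C\}}\frac{dE}{\Im u[V'](E,s)}<\epsilon$ for every $s$, which is the claim.

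The delicate step, and the one I expect to absorb almost all the work, is the mass estimate near $V$'s band edges: passing to period $\sim2nT$ multiplies the number of bands of $V'$ — hence the number of density singularities — by a factor $\sim2n$, so one must show the singularities created near $V$'s edges are correspondingly weak. What makes this go through is the conjunction of (i) the explicit $\tfrac{2|c|}{\sqrt{4-\tr^2}}$ form of the density, through which the relevant transfer-matrix entry $c=(A[V'](E,s,s+T'))_{21}$ is seen to be $O(\delta^{1/4})$ throughout the transition zone (where all the new band edges sit) and only polynomially — not exponentially — large near a parabolic $A[V](E)$, together with the fact that the total singularity strength $\sum_{E_*}c_s(E_*)^2|\beta_{E_*}|^{-1}$ is controlled by a single integral of size $O(\delta^{1/2})+o_\eta(1)$ rather than by the (large) number of terms; and (ii) the uniform-in-$n$ boundedness of all the period transfer matrices of $V'$ on $K$, which is exactly what lets the deformation be unwound without cost and what makes $\delta_0$ independent of $n$.
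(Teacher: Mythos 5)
Your first half --- the uniform-in-$n$ closeness of $u[V'](E,s)$ to $u[V](E,\sigma(s))$ on the part of $K$ where $\tr A[V']$ stays away from $\pm 2$, via the runs $R_\theta^a$ and $(\Gamma_\delta R_\theta)^b$ --- is sound and is essentially the computation the paper performs. The genuine gap is in the second half, where you estimate directly the mass of $\{$density $\geq C\}$ contributed by the transition zone and by the neighborhoods of the old band edges. Two specific steps fail. First, the master inequality $\sum_{E_*}|\beta_{E_*}|^{-1}\lesssim$ (total length of bands and gaps of $V'$ below $M$) is not true: at an edge of a nearly closed gap of height $h$ one has $|\beta_{E_*}|\sim\sqrt{\kappa h}$ with $\kappa$ the curvature of the discriminant (here of order $n^2$), so $|\beta_{E_*}|^{-1}$ blows up as $h\to 0$ while the length contribution stays bounded; and the $O(\delta)$ slow modulation opens precisely such nearly closed gaps inside the bands of $V$, in numbers growing like $n$. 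At a closed gap $c_s(E_*)$ vanishes together with $\beta_{E_*}$, so a correct accounting would need joint control of $c_s(E_*)^2/|\beta_{E_*}|$; your argument, which separates $\max_Z|c_s|^2$ from $\sum|\beta_{E_*}|^{-1}$, does not give this, and no bound uniform in $n$ follows. Second, near an old band edge the claim that $A[V'](E,s,s+T')$, hence $c_s$, is $O(1)$ is false: powers of a near-parabolic matrix grow linearly, so on the part of $\Sigma(V')$ within $O(n^{-2})$ of an old edge $|c_s|$ can be of size comparable to $n$, and the bound $o_\eta(1)/C$ for that piece is not established. Since the whole point of the lemma is that $\delta_0$ can be chosen independently of $n$, these non-uniformities are fatal as written.

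The paper avoids this bookkeeping entirely, and this is the idea you are missing: it never estimates the spectral measure of $V'$ near any band edge. Instead it bounds the \emph{total} mass from above: for $\delta$ small and every $n$, $V'_{s}$ agrees with $V_{t(s)}$ up to a uniformly small error on any window $|s'|\leq C_1$ (the accumulated padding over a bounded window is $O(\delta)$), so by the weak continuity of spectral measures (Lemma \ref{blacont}) one gets $\mu_{V'_{s}}(-\infty,M]\leq\mu_{V_{t(s)}}(-\infty,M]+$ small, uniformly in $n$ and $s$. Combining this upper bound on the total mass with the lower bound for the truncated mass on the good set (your first half, carried out on a finite union $J$ of closed intervals in $\Omega(V)$ capturing all but $\epsilon_0$ of the truncated mass), the bad mass is ``total minus good'' and comes out $<\epsilon$, with the transition zone, the new band edges and the old parabolic edges never entering the estimate. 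If you wish to rescue the direct route you would have to prove the joint $c_s^2/\beta$ control at all $\sim n$ edges; the soft argument via Lemma \ref{blacont} is both shorter and automatically uniform in $n$.
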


\begin{proof}

Let $A(\cdot)=A[V](\cdot)$.  Let $J \subset \Omega(V) \cap (-\infty,M]$
be a finite union of closed intervals such that
\be \label {shif}
\sup_s \mu_{V_s}(-\infty,M]-\mu_{V_s,C}(J)<\epsilon_0<\epsilon,
\ee
where
$V_s$ is the shift of $V$ and $\mu_{V_s,C}$ is the truncation of the
spectral measure.

If $E \in \Omega(V)$,
then $B(E) A(E) B(E)^{-1}=R_{\theta(E)}$, where $B=\B(A(E))$ and
$\theta(E)=\Theta(A(E))$ are analytic
functions and $\frac {d} {dE} \theta(E)>0$.  Let
\be
A_\delta(E)=
D(E) R_{\delta \frac {E^{1/2}} {2 \pi}} D(E)^{-1} A(E),
\ee
where $D(E)=\begin{pmatrix} E^{1/4} & 0 \\ 0 & E^{-1/4} \end{pmatrix}$.
Then
\be
A'(E)=A_\delta(E)^n A(E)^n,
\ee
where $A'(\cdot)=A[V'](\cdot)$.

For every $\kappa>0$, for $\delta>0$ sufficiently small, it is clear that for
every $0 \leq t \leq \delta$, for every $E \in J$,
we have $d(H(t) \cdot u(E),u(E))<\kappa$, where $H(t)=D(E)R_{t
\frac{E^{1/2}}
{2 \pi}} D(E)^{-1}$ is the exponential of
$t \begin{pmatrix} 0&-E\\1&0 \end{pmatrix}$ and $u(\cdot)=u[V](\cdot)$.

Notice that for $\delta>0$ sufficiently small, we have $|\tr A_\delta(E)|<2$
for $E \in J$.  Moreover, $B_\delta(E)=\B(A_\delta(E))$ and
$\theta_\delta(E)=\Theta(A_\delta(E))$ converge to
$B(E)$ and $\theta(E)$, when $\delta \to 0$,
as analytic functions of $E \in J$.  In particular,
\be
\lim_{\delta \to 0} \sup_n \sup_{E \in J}
\|B(E) A'(E) B(E)^{-1}-R_{n (\theta(E)+\theta_\delta(E))}\|=0.
\ee
For $0<\eta<1/2$,
let $J_{\delta,n,\eta} \subset J$ be the set of all $E$ such
that $2 n (\theta(E)+\theta_\delta(E))$ is at distance at least $\eta$ from
$\Z$.  Since $\frac {d} {dE}
\theta(E)>0$ and $\frac {d} {dE} \theta_\delta(E)>0$, we get, for every
$\delta>0$ small,
\be
\lim_{n \to \infty} |J_{\delta,n,\eta}|=(1-2 \eta) |J|.
\ee

For every $0<\eta<1/2$ and $\kappa>0$, if $\delta$
is sufficiently small, then
for every $n$ and for every $E \in J_{\delta,n,\eta}$, we have
$E \in \Omega(V')$ and $d(u'(E),u_\delta(E))+d(u_\delta(E),u(E))<\kappa$,
where $u'(\cdot)=u[V'](\cdot)$ and $u_\delta(E)=\u(A_\delta(E))$.
Notice that $A(E)^j \cdot u'(E)=u'(E,a_j)$ for $0
\leq j \leq n$, and $A_\delta(E)^{2n-j} \cdot u'(E,a_j)=u'(E)$ for $n \leq j
\leq 2n$.  In particular,
\be
d(u'(E,a_j),u(E))<\kappa.
\ee
Thus for $0 \leq j \leq 2n-1$ and $a_j \leq t \leq a_j+T$, we get
\be
d(u'(E,t),u(E,t-a_j))<\kappa.
\ee
For $n \leq j \leq 2n-1$ and $a_j+T \leq t \leq a_j+T+\delta$, we have
$H(a_j+T+\delta-t) u'(E,t)=u'(E,a_{j+1})$, so that
\begin{align}
d(&u'(E,t),u(E))=d(u'(E,a_{j+1}),H(a_j+T+\delta-t) \cdot u(E))\\
\nonumber
&
\leq d(u'(E,a_{j+1}),u(E))+d(H(a_j+T+\delta-t) \cdot u(E)),u(E))<2 \kappa.
\end{align}
It follows that for each $0 \leq t' \leq T'$ we can find some $0 \leq t(t') \leq
T$, defined by $t(t')=t'-a_j$ if $a_j \leq t' \leq a_j+T$ for some $j$,
and $t(t')=0$, if $a_j+T \leq t \leq a_{j+1}$ for some $j$,
such that for every $E \in J_{\delta,n,\eta}$, we have
$d(u'(E,t'),u(E,t))<2 \kappa$.

It follows that for every $0<\eta<1/2$ and $\kappa>0$, for
$\delta>0$ sufficiently small, for every $n$ sufficiently large,
\begin{align}
\mu_{V'_{t'},C}(J_{\delta,n,\eta}) &\geq e^{-2 \kappa}
\mu_{V_t,C}(J_{\delta,n,\eta})\\
\nonumber
&
\geq \mu_{V_t,C}(J)-(1-e^{-2 \kappa}) C |J|-
e^{-2 \kappa} C |J \setminus J_{\delta,n,\eta}|\\
\nonumber
&\geq \mu_{V_t,C}(J)-2
(\kappa+\eta) C |J|
\end{align}
(where $\mu_{V'_{t'},C}$ denotes the truncation of the
spectral measure $\mu_{V'_{t'}}$ for the shift of $V'$).
Thus, if $\eta+\kappa$ is sufficiently small, we get
\be
\mu_{V'_{t'},C}(-\infty,M] \geq \mu_{V_t,C}(J)-\frac {\epsilon-\epsilon_0} {2}.
\ee

Notice that $t(t')$ is such that for every
$C_1>0$, $\epsilon_1>0$, we have, for every $\delta>0$ sufficiently small,
for every $n \in \N$,
\be
\sup_{|s| \leq C_1} |V'(t'+s)-V(t+s)|<\epsilon_1.
\ee
By Lemma \ref {blacont}, if $\delta>0$ is sufficiently small we have
\be
\mu_{V_{t'}}(-\infty,M]<\mu_{V_t}(-\infty,M)+\frac {\epsilon-\epsilon_0} {2}.
\ee

Together with (\ref {shif}), it follows that
\be
\mu_{V'_{t'}}(-\infty,M]<\mu_{V'_{t'},C}(-\infty,M]+\epsilon,
\ee
as desired.
\end{proof}

\begin{rem} \label {blaremark3}

The construction also gives that for every $\kappa>0$,
for a subset of $\Omega(V)$ whose
complement has arbitrarily small measure, we have
\be
\sup_{t'} d(u[V'](E,t'),i) \leq \sup_t d(u[V](E,t),i)+2 \kappa.
\ee

\end{rem}

\noindent{\it Proof of Theorem \ref {continuumwm}.}
Define a sequence of $T^{(n)}$-periodic time-changes of solenoidal flows
$F^{(n)}_t:S^{(n)} \to S^{(n)}$ and a sequence of continuous
functions $v^{(n)}:S^{(n)} \to \R$ in the following way.

First take $T^{(0)}=1$, $S^{(0)}=\R/\Z$, $F^{(0)}_t=F^{S^{(0)}}_t$,
and $v^{(0)}:\R/\Z \to \R$ a non-constant smooth function with
$v^{(0)}=0$ near $0$.  Let $\kappa_0=1$.  Then for $j \geq 1$,
\begin{enumerate}
\item Choose $C_{j-1}>0$ so that
$t \mapsto v^{(j-1)}(F^{(j-1)}_t(0))$
is $(2^{1-j},C_{j-1},2^{j-1})$-uniform,
\item Choose $(F^{(j)},v^{(j)})$ so that it is
$(2^{-j'},C_{j'},2^{j'})$-uniform for all $0 \leq j' \leq j-1$,
$F^{(j)}$ is $(2^{j-1},F^{(j-1)})$-mixed, and $(F^{(j)},v^{(j)})$ is
$\kappa_{j-1}$-close to a lift of $(F^{(j-1)},v^{(j-1)})$
\item Let $0<\kappa_j<\kappa_{j-1}/2$
be such that if $(F,v)$ is $2 \kappa_j$-close to the lift of
$(F^{(j)},v^{(j)})$ then $(F,v)$ is $(2^{j-1},F^{(j-1)})$-mixed.
\end{enumerate}
The first step is an application of Lemma \ref {blaunif1}, the second
is an application of Lemmas \ref {blaunif} (notice that
by the previous choices, $(F^{(j-1)},v^{(j-1)}$ is
$(2^{-j'},C_{j'},2^{j'})$-uniform for all $0 \leq j' \leq j-1$)
and \ref {blamixed}, and the third is
an application of Lemma \ref {blamixed1}.

Let $S$ be the projective limit of the $S^{(j)}$ and let
$(F,v)$ be the projective limit of the $(F^{(j)},v^{(j)})$.  Then
$F$ is $(2^j,F^{(j)})$-mixed for all $j \geq 1$,
so it is weak mixing by Lemma
\ref {blamixed2}.  We also have that for every $x \in S$,
$V^{(j)}_x:t \mapsto
v^{(j)}(F^{(j)}_t(p_{S,S^{(j)}}(x))$ converges to
$V_x:t \mapsto v(F_t(x))$
uniformly on compacts.  It follows that the spectral measure
$\mu=\mu_{V_x}$
is the limit of the spectral measures $\mu_{V^{(j)}_x}$. 
For every $C>0$, and up to taking a subsequence,
the truncations $\mu_{V^{(j)}_x,C}$ converge to 
a measure $\mu_C \leq \mu$
which is absolutely continuous with density bounded by $C$. 

Then we have
\be
\mu(-\infty,2^j)-\int_{-\infty}^{2^j} \frac {d \mu} {dE} dE
\leq \lim_{k \to
\infty} \mu_{V^{(k)}_x}(-\infty,2^j)-
\mu_{V^{(k)}_x,C_j}(-\infty,2^j) \leq  2^{-j}.
\ee
The result follows.
\qed

\begin{rem} \label {blaremark4}

Notice that the construction allows us to obtain
(by Remark \ref {blaremark3}), that for $k \in \N$ there exists a subset
$\Gamma^{(k)} \subset \Omega(V^{(k)}) \cap \Omega(V^{(k+1)})$ such that
$|\Omega^{(k)} \setminus \Gamma^{(k)}| \leq 2^{-k}$ and
\be
\sup_t d(u[V^{(k+1)}](E,t),i) \leq \sup_t d(u[V^{(k)}](E,t),i)+2^{-k}.
\ee
Moreover, by Lemma \ref {bla3014}, we may also assume that
$|\Sigma(F,v)
\setminus \Omega(V^{(k)})| \leq 2^{-k}$.
It follows that for almost every $E \in
\Sigma(F,v)$, there exists $C(E)>0$ such that
$E \in \Omega(V^{(k)})$ for every $k$ sufficiently large and
$\sup_t d(u[V^{(k)}](E,t),i) \leq C(E)$.  This implies that
$\sup_t \sup_s \|A[V^{(k)}](E,t,s)\| \leq e^{C(E)}$ and hence
\be
\sup_t \sup_s \|A[F,v](E,x,t,s)\| \leq e^{C(E)},
\ee
so that every eigenfunction with such an energy must be bounded.

\end{rem}

\section{Discrete case: unbounded eigenfunctions}

\subsection{Schr\"odinger cocycles}

Given a function $V:\Z \to \R$, we define the transfer matrices
$A[V](E,m,n)$ so that $A[V](E,m,m)=\id$,
\be
A[V](E,m,n+1)=\begin{pmatrix}
E-V(n) & -1 \\ 1 & 0 \end{pmatrix} A[V](E,m,n).
\ee
An eigenfunction of the Schr\"odinger operator with potential $V$ and energy
$E$ is a solution of $\begin{pmatrix} \uu_n \\ \uu_{n-1} \end{pmatrix}=
A[V](E,m,n) \cdot \begin{pmatrix} \uu_m \\ \uu_{m-1} \end{pmatrix}$.

\begin{lemma}

If $n>m$ and $|\tr A[V](E,m,n)|<2$ then
\be
\frac {d} {dE} \Theta(A[V](E,m,n))<0.
\ee

\end{lemma}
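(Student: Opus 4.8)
The plan is to run the infinitesimal version of the computation underlying Lemma~\ref{bla7}, now for the discrete cocycle, and to track the sign flip caused by the different free matrix. Write $A(E)=A[V](E,m,n)=S_{n-1}(E)\cdots S_m(E)$ with $S_k(E)=\begin{pmatrix}E-V(k)&-1\\1&0\end{pmatrix}$, and work on the open set of $E$ with $|\tr A(E)|<2$, where $\u(A(E))$, $\B(A(E))$ and $\Theta(E):=\Theta(A(E))$ are smooth in $E$ (for $\Theta$: $2\cos 2\pi\Theta(E)=\tr A(E)$, and $\Theta$ cannot leave $(0,\tfrac12)\cup(\tfrac12,1)$ without $|\tr A(E)|$ reaching $2$). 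First I would differentiate in $E$ the conjugacy $B(E)A(E)B(E)^{-1}=R_{\Theta(E)}$, with $B(E)=\B(A(E))$ (here and below $'=\partial_E$). Using $\frac{d}{d\theta}R_\theta=2\pi R_\theta J$, $J=\begin{pmatrix}0&-1\\1&0\end{pmatrix}$, this gives $[B'B^{-1},R_\Theta]+BA'B^{-1}=2\pi\Theta'R_\Theta J$; multiplying on the left by $R_\Theta^{-1}$ and applying $X\mapsto\tr(XJ)$ kills the commutator term (since $J$ centralizes $R_\Theta$), while $\tr(J^2)=-2$, so
\be
-4\pi\,\Theta'(E)=\tr\big(A(E)^{-1}A'(E)\,Y(E)\big),\qquad Y(E):=B(E)^{-1}J\,B(E).
\ee
Here $Y(E)$ is the generator of the $\SL(2,\R)$-stabilizer of $\u(A(E))$, namely $J$ conjugated through $\B(A(E))^{-1}$, which is upper triangular with positive diagonal and carries $i$ to $\u(A(E))$.

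Next I would expand $A^{-1}A'$ over the product. Since $\partial_E S_k=\begin{pmatrix}1&0\\0&0\end{pmatrix}$ and $S_k^{-1}\begin{pmatrix}1&0\\0&0\end{pmatrix}=\begin{pmatrix}0&0\\-1&0\end{pmatrix}$, the Leibniz sum telescopes to
\be
A(E)^{-1}A'(E)=-\sum_{k=m}^{n-1}R_k(E)^{-1}\begin{pmatrix}0&0\\1&0\end{pmatrix}R_k(E),\qquad R_k(E):=A[V](E,m,k).
\ee
Plugging in, and using cyclicity together with $\tr\!\big(\begin{pmatrix}0&0\\1&0\end{pmatrix}Z\big)=Z_{12}$, one gets $-4\pi\Theta'(E)=-\sum_{k=m}^{n-1}(R_kYR_k^{-1})_{12}$. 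The conjugate $R_kYR_k^{-1}$ is the generator of the stabilizer of $\tilde u_k:=R_k(E)\cdot\u(A(E))\in\H$, in the same normalization ($J$ conjugated through an Iwasawa upper-triangular factor, since the $\SO(2)$-part of the Iwasawa decomposition of $R_k\B(A)^{-1}$ centralizes $J$). A one-line $\mathfrak{sl}(2,\R)$ computation --- with $g_w=(\Im w)^{-1/2}\begin{pmatrix}\Im w&\Re w\\0&1\end{pmatrix}$, so $g_w\cdot i=w$ --- gives $(g_wJg_w^{-1})_{12}=-|w|^2/\Im w$. Hence
\be
-4\pi\,\Theta'(E)=\sum_{k=m}^{n-1}\frac{|\tilde u_k|^2}{\Im\tilde u_k}>0,
\ee
since the sum is nonempty ($n>m$) and each term is positive ($\tilde u_k\in\H$); this is the assertion, with the explicit formula $\Theta'(E)=-\tfrac1{4\pi}\sum_{k=m}^{n-1}|\tilde u_k|^2/\Im\tilde u_k$ as a bonus.

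As a cross-check, the ``morally correct'' reason is rotation-number monotonicity: in the slope coordinate $z$ on $\P^1(\R)$, each $S_k(E)$ acts by $z\mapsto(E-V(k))-z^{-1}$, moving every point monotonically as $E$ increases, hence so does the product $A(E)$, so its Poincar\'e rotation number --- which agrees with $\Theta(A(E))$ on the elliptic locus up to the usual conventions --- is monotone in $E$; strictness (no Arnold-tongue plateaus) is forced because $A(E)$ is conjugate there to a rigid rotation, but it is cleaner to read strictness off the displayed formula. I expect the only real work to be sign bookkeeping: the derivative of $R_\Theta$, the telescoping of $A^{-1}A'$, and the elementary fact that the normalized infinitesimal rotation fixing $w\in\H$ has $(1,2)$-entry $-|w|^2/\Im w<0$. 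The orientation convention packaged into $\Theta\in(0,\tfrac12)\cup(\tfrac12,1)$ is precisely what makes that sign --- and hence $\Theta'<0$ here, versus $\Theta'>0$ in the continuum --- come out consistently.
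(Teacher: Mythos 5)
Your proof is correct, and there is in fact nothing in the paper to compare it against: like its continuum counterpart, Lemma \ref{bla7}, this lemma is stated without proof as a basic monotonicity fact, so your derivation simply supplies the missing argument. I checked the key steps. Differentiating $\B(A)A\B(A)^{-1}=R_{\Theta(A)}$ and pairing against $J$ via the trace form does kill the commutator term (since $J$ commutes with $R_\Theta$) and gives $-4\pi\Theta'=\tr\bigl(A^{-1}A'\,\B(A)^{-1}J\B(A)\bigr)$; the Leibniz expansion with $S_k^{-1}\partial_E S_k=\begin{pmatrix}0&0\\-1&0\end{pmatrix}$ is right; the identification of $R_kYR_k^{-1}$ with $g_{\tilde u_k}Jg_{\tilde u_k}^{-1}$ is legitimate because the upper-triangular positive-diagonal factor of an element of $\SL(2,\R)$ is determined by the image of $i$, and the $\SO(2)$ factor commutes with $J$; and the entry computation $(g_wJg_w^{-1})_{12}=-|w|^2/\Im w$ checks out. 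The resulting formula $\Theta'(E)=-\frac{1}{4\pi}\sum_{k=m}^{n-1}|\tilde u_k|^2/\Im \tilde u_k$ also passes the sanity test in the free case $V\equiv 0$, $n-m=1$, where $A(E)=\begin{pmatrix}E&-1\\1&0\end{pmatrix}$, $\Theta(E)=\theta$ with $E=2\cos 2\pi\theta$, $\theta\in(0,\tfrac12)$, and indeed $d\Theta/dE=-1/(4\pi\sin 2\pi\theta)$, matching your formula since $|\u(A)|=1$ there. The closing ``cross-check'' via monotonicity of $z\mapsto (E-V(k))-z^{-1}$ is, as you acknowledge, only heuristic (its orientation conventions are precisely where a sign could be fumbled), but nothing in the proof depends on it.
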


Assume now that $V$ is periodic of period $N$.  In this case we write
$A[V](E,n)=A[V](E,n,n+N)$ and $A[V](E)=A[V](E,0)$.  Note that $\tr
A[V](E,n)=\tr A[V](E)$ for all $n \in \N$.
Then the spectrum $\Sigma=\Sigma(V)$
of the Schr\"odinger operator with potential $V$ is the set of all $E$ with
$|\tr A[V](E)| \leq 2$.  Let also
$\Omega=\Omega(V)$ be the set of all $E$ with $|\tr A[V](E)|<2$.  We note
that $\Sigma \setminus \Omega=\partial \Omega$ consists of finitely many
points.  For $E \in \Omega(V)$, put
$u[V](E,n)=\u(A[V](E,n))$ and $u[V](E)=u[V](E,0)$.

Let $f:X \to X$ be a minimal
uniquely ergodic map with invariant probability measure
$\mmu$.  Given $v:X \to \R$ continuous, we let $A[f,v](E,x,m,n)=A[V](E,m,n)$
where $V(n)=v(f^n(x))$.  We define the Lyapunov exponent
\be
L(E)=\lim_{n \to \infty} \frac {1} {n} \int \ln \|A[f,v](E,x,0,n)\| d\mmu(x).
\ee

We will use the following criterion for the existence of ac spectrum
(Ishii-Pastur, Kotani, Last-Simon).

\begin{thm}[see \cite {D}]

The ac part of the spectral measure of the discrete Schr\"odinger operator
with potential $V(n)=v(f^n(x))$ is equivalent to the restriction of
Lebesgue measure to $\{L(E)=0\}$.

\end{thm}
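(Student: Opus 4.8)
The plan is to establish this as the conjunction of the two classical halves behind it: the Ishii--Pastur theorem, which shows that $\{L>0\}$ carries no absolutely continuous weight, and Kotani's theorem, which shows that every positive-Lebesgue-measure subset of $\{L=0\}$ does. The unifying object is the pair of Weyl--Titchmarsh $m$-functions. For $z\in\C$ with $\Im z>0$, write $m_+(z,x)$ and $m_-(z,x)$ for the $m$-functions of the restrictions of the operator $H_x$ with potential $n\mapsto v(f^n(x))$ to the half-lines $n\geq 1$ and $n\leq -1$; each is a Herglotz function of $z$ (so $\Im m_\pm\geq 0$), they transform under $f$ by the obvious cocycle relations, and the diagonal Green's function at the origin, $G(z,x)$, is an explicit M\"obius function of $(m_+(z,x),m_-(z,x))$. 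Since the ac part of the spectral measure $\mu_x$ associated to $\delta_0$ satisfies $\frac{d\mu_{x,\mathrm{ac}}}{dE}(E)=\tfrac{1}{\pi}\Im G(E+i0,x)$ for a.e.\ $E$, and $\Im G(E+i0,x)>0$ precisely when at least one of $\Im m_\pm(E+i0,x)$ is positive, the assertion reduces to proving, for $\mmu$-a.e.\ $x$, that $\Im m_\pm(E+i0,x)=0$ for a.e.\ $E$ with $L(E)>0$ and that $\Im m_\pm(E+i0,x)>0$ for a.e.\ $E$ with $L(E)=0$; this is exactly the statement that $\mu_{x,\mathrm{ac}}$ and $\Leb|_{\{L=0\}}$ are mutually absolutely continuous. (Recall the ac part is $x$-independent as a set, so it suffices to treat a.e.\ $x$.)

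I would first dispose of $\{L>0\}$. Applying Oseledets' theorem to the $\SL(2,\R)$ transfer cocycle $A[f,v](E,x,0,\cdot)$ over the ergodic invertible map $f$ gives, for a.e.\ $E$ with $L(E)>0$ and $\mmu$-a.e.\ $x$, a one-dimensional space of solutions of $H_x\uu=E\uu$ decaying like $e^{-L(E)n}$ at $+\infty$, with all other solutions growing at rate $e^{L(E)n}$, and symmetrically at $-\infty$. In particular a subordinate solution exists on each half-line. By the Gilbert--Pearson--Last--Simon subordinacy theory, energies admitting a subordinate solution contribute nothing to the (half-line, and hence whole-line) ac spectrum, so $\mu_{x,\mathrm{ac}}(\{L>0\})=0$ for $\mmu$-a.e.\ $x$; equivalently the boundary values $m_\pm(E+i0,x)$ are real for a.e.\ such $E$. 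Since $\mu_{x,\mathrm{ac}}\ll\Leb$ by definition of the ac part, this already yields $\mu_{x,\mathrm{ac}}\ll\Leb|_{\{L=0\}}$.

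The substantive half is Kotani's theorem (see \cite{D,K}): if $S=\{L=0\}$ has positive Lebesgue measure, then $\Leb|_S\ll\mu_{x,\mathrm{ac}}$ for $\mmu$-a.e.\ $x$. Here I would complexify the Lyapunov exponent: by the Thouless formula $L(E)=\int\ln|E-E'|\,dN(E')$, with $N$ the integrated density of states, the function $w(z)=\int\ln(z-E')\,dN(E')$ is defined and essentially Herglotz on $\Im z>0$, with boundary real part $L$ and boundary imaginary part $\pi N$. Combining the subharmonicity of $w$ with Kotani's trace identity — which expresses $L(E)$ as $-\Re$ of an explicit $\mmu$-average of a function of $m_+$ (and $m_-$) — one shows that $L\equiv 0$ on $S$ forces, for a.e.\ $E\in S$ and $\mmu$-a.e.\ $x$, that the boundary values $m_\pm(E+i0,x)$ exist, are non-real, and satisfy the reflectionless relation $m_+(E+i0,x)=-\overline{m_-(E+i0,x)}$. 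Once this holds on a conull subset of $S\times X$, Fubini's theorem converts ``for a.e.\ $E$, $\mmu$-a.e.\ $x$'' into ``for $\mmu$-a.e.\ $x$, a.e.\ $E\in S$,'' giving $\Im G(E+i0,x)>0$ for a.e.\ $E\in S$ and hence $\Leb|_S\ll\mu_{x,\mathrm{ac}}$ for $\mmu$-a.e.\ $x$. Together with the previous paragraph, $\mu_{x,\mathrm{ac}}\sim\Leb|_{\{L=0\}}$, which is the claimed equivalence.

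I expect the Kotani direction to be the main obstacle: extracting, from the bare vanishing of the Lyapunov exponent, the a.e.\ existence and non-reality of the boundary values of the $m$-functions and the reflectionless identity requires the full subharmonic/Herglotz machinery for $w(z)$ together with the precise form of Kotani's trace formula, and it is genuinely delicate — indeed it is the analytic source of all the ``ac spectrum forces rigidity'' phenomena discussed in this paper. By contrast, the Ishii--Pastur half and the final measure-theoretic bookkeeping (passing from joint a.e.\ statements to the fixed-$x$ statement via Fubini, and checking that ``equivalent'' means exactly the conjunction of the two absolute-continuity inclusions above) are comparatively routine.
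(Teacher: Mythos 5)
Your decomposition (Ishii--Pastur for the inclusion of the ac support in $\{L=0\}$, Kotani for the reverse inclusion via $m$-functions, boundary values and the reflectionless identity) is exactly the decomposition the paper has in mind --- it does not reprove the theorem but cites it, and the remark following it attributes it to Ishii--Pastur, Kotani \emph{and Last--Simon}. The third name points to the gap in your write-up: your argument, being built from Oseledets, Fubini and Kotani's trace formula, only yields the statement for $\mmu$-almost every $x$, whereas the theorem as stated (for the minimal uniquely ergodic $f$ fixed in that section) carries no ``a.e.\ $x$'' qualifier, and the paper later uses it in exactly that strength (``$\Gamma$ is contained in the essential support of the ac spectrum \emph{for every} $x$''). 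You dismiss this with the parenthetical ``recall the ac part is $x$-independent as a set, so it suffices to treat a.e.\ $x$,'' but that is precisely the point that needs an argument: for a general ergodic system the essential support of the ac spectrum is only almost surely constant (Kunz--Souillard type arguments), and upgrading to \emph{every} $x$ is a genuine theorem of Last--Simon \cite{LS}, which uses minimality together with their characterization of the ac essential support through energies where the transfer matrices satisfy $\liminf$-type boundedness, a quantity that behaves semicontinuously along orbit closures. Without invoking (or reproving) that ingredient, your proof establishes a strictly weaker statement than the one quoted.

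The two halves you do treat are handled along the standard lines and match the intended references: the Ishii--Pastur half via exponential dichotomy and reality of $m_\pm(E+i0)$ (or subordinacy) is fine, and for the Kotani half you correctly identify that the real content is the a.e.\ existence and non-reality of the boundary values of $m_\pm$ together with the reflectionless relation $m_+(E+i0,x)=-\overline{m_-(E+i0,x)}$ on $\{L=0\}$, which you defer to the Herglotz/subharmonicity machinery and Kotani's trace identity as in \cite{D}, \cite{K}. As a proposal that is acceptable, but be aware that this deferred step is the bulk of the proof; the only structural omission relative to the paper's own attribution is the Last--Simon everywhere-in-$x$ statement discussed above.
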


\begin{rem}

The fact that the essential support of the ac spectrum is contained in
$\{L(E)=0\}$, for almost every $x$, is the Ishii-Pastur Theorem.  Kotani's
Theorem gives the reverse inclusion, still for almost every $x$.  Those
results apply for general ergodic dynamics.  Last-Simon
proved that for minimal dynamics the essential support of the ac spectrum is
constant everywhere (and not only almost everywhere).

\end{rem}

\subsection{Construction of families of periodic potentials}

\def\V{\mathcal {V}}

Let $\V:\R/N_0 \Z \times \Z/N_1 \Z \to \R$
be a continuous function.  We think of
$\V$ as a one-parameter family (parametrized by $\R/N_0 \Z$)
of periodic potentials $\V_t(\cdot)=\V(t,\cdot)$
(of period $N_1$).  

We define some basic operations on such a $\V$.  First, for $n \in \N$, the
$n$-repetition $\V':\R/N_0 \Z \times \Z/n N_1 \Z \to \R$ of $\V$ is given by
$\V'(t,j)=\V(t,j)$.  We obviously have
\be
A[\V'_t](E,m)=A[\V_t](E,m)^n.
\ee

Secondly, given some $n \in
\N$, we define the $n$-twist $\V':\R/N_0 \Z \times \Z/n N_1 \Z \to \R$ of $\cV$
by $\V'(t,j)=\V(t+N_0 \frac {k} {n},l)$, whenever
$j=k N_1+l$ with $0 \leq j \leq
n-1$ and $0 \leq l \leq N_1-1$.  We notice that
\be
A[\V'_t](E)=A[\V_{t+N_0 \frac {n-1} {n}}](E) \cdots
A[\V_t](E).
\ee

For the third operation, we will make use of some fixed smooth function
$\Psi:[-1,2] \to [0,1]$, with $\Psi=0$ in a neighborhood of $-1$ and $2$,
and $\Psi=1$ in a neighborhood of $[0,1]$.  We also assume that $N_1 \geq
3$.  Then for $\delta>0$ and $n \in \N$, we define the $(\delta,n)$-slide
$\cV':\R/2 n N_0 \Z \times \Z/3 N_1 \Z \to \R$ of $v$ by
\be
\V'(t,j)=\V(t,j), \quad 0 \leq j \leq 2 N_1-1,
\ee
\be
\V'(t,j)=\V(t,j), \quad 2 N_1 \leq j \leq 3 N_1-1 \quad
t \in [0,n N_0-1] \cup [n N_0+2,2 n N_0],
\ee
and
\be
\V'(t,j)=\V(t+\delta \Psi(t-n N_0),j), \quad 2 N_1 \leq j \leq 3 N_1-1
\quad t \in [n N_0-1,n N_0+2].
\ee
Notice that we have
\be
A[\V'_t](E)=A[\V_t](E)^3, \quad t \in [0,n N_0-1] \cup [n
N_0+2,2 n N_0],
\ee
\be
A[\V'_t](E)=A[\V_{t+\delta \Psi(t-n N_0)}](E) \cdot
A[\V_t](E)^2, \quad t \in [n N_0-1,n N_0+2].
\ee

\begin{lemma}

Fix some closed interval $J \subset \R$ and let $u_0:J \times
[-1,2] \to \H$ be a smooth function with
\be \label {u0}
\sup_{t \in [0,1]}
\left |\frac {d} {dt} u_0(E,t) \right |>0
\ee
for every $E \in J$.
There exists $\epsilon_1>0$, $C'>0$ and $\delta_0>0$
with the following property.
Let $\V:\R/N_0 \Z \times \Z/N_1 \Z \to \R$ be a smooth function.
Let $E_0 \in \inter J \cap \bigcap_t \Omega(\V_t)$.
Assume that $[-1,2] \ni t \mapsto u[\V_t](E_0)$ is (strictly)
$\epsilon_1$-close in the $C^1$-topology to $[-1,2] \mapsto u_0(E_0,t)$.
Then there exists $\epsilon_0>0$ such that for every
$0<\epsilon<\epsilon_0$, for every $\kappa>0$,
for every $0<\delta<\delta_0$, for every $N_2$ sufficiently
large, for every $N_3$ sufficiently large, for every $N_4$ sufficiently
large, for every $N_5$ sufficiently large,
if $\V':\R/2 N_4 N_0 \Z \times \Z/3 N_5 N_3 N_2 N_1 \Z$
is the $N_5$-twist of the $(\delta,N_4)$-slide of the $N_2$-repetition of
the $N_3$-twist of $\V$, then there exists a compact set
\be
\Lambda \subset [E_0-\epsilon,E_0+\epsilon] \cap \bigcap_t \Omega(\V'_t)
\ee
such that
\begin{enumerate}
\item $|\Lambda|>2 (1-C' \delta) \epsilon$,
\item For $E \in \Lambda$, $[-1,2] \ni t \mapsto u[\V'_t](E)$ is
(strictly) $\epsilon_1$-close in the $C^1$-topology to
$[-1,2] \ni t \mapsto u_0(E,t)$.
\item For $E \in \Lambda$,
\begin{align}
& \left |\frac {1} {6 N_5 N_4 N_3 N_2 N_1 N_0}
\sum_{j \in \Z/3 N_5 N_3 N_2 N_1 \Z}
\int_0^{2 N_4 N_0} d(u[\V_t](E,j),i) dt \right .\\
\nonumber
&
\left .-\frac {1} {N_1 N_0}
\sum_{j \in \Z/N_1 \Z} \int_0^{N_0} d(u[\V_t](E,j),i) dt \right |<\kappa,
\end{align}
\item For $E \in \Lambda$,
\be
\inf_t \sup_j d(u[\V'_t](E,j),i) \geq \sup_t \sup_j d(u[\V_t](E,j),i)-\kappa,
\ee
\item For any $C' \delta<\gamma<C'^{-1}$, there exists a compact set
$\Lambda' \subset \Lambda$ with $|\Lambda'|>\gamma \epsilon$
such that for $E \in \Lambda'$,
\be
\inf_t \sup_j d(u[\V'_t](E,j),i) \geq
\sup_t \sup_j d(u[\V_t](E,j),i)+C'^{-1} \frac {\delta} {\gamma}-\kappa.
\ee
\end{enumerate}

\end{lemma}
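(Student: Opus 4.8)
The plan is to mirror the continuum estimate that precedes Lemma \ref{bla21}, reading the four operations as follows: the $N_3$-twist and the $N_5$-twist are slow deformations governed by Lemma \ref{blaparameter}; the $N_2$-repetition makes the orbit $\{k\,\Theta(A[(\V_1)_s](E))\}$ equidistribute mod $1$, so that (by density on circles in $\H$) any vector gets pushed outwards; and the $(\delta,N_4)$-slide introduces the perturbation — not a small rotation, as in the continuum, but a small displacement of the \emph{axis} $u[(\V_2)_s](E)$ of an elliptic transfer matrix, which is precisely why one needs $\sup_{t\in[0,1]}|\tfrac{d}{dt}u_0(E,t)|>0$. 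Throughout, $\delta_0,C',\epsilon_1$ are fixed first and one works on $[E_0-\epsilon,E_0+\epsilon]\subset\inter J$ with $\epsilon$ small, so that ellipticity and the $\epsilon_1$-closeness persist by continuity.

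I would proceed in the order the operations are applied. First, write $\theta[\V_t](E)=\Theta(A[\V_t](E))$; the discrete monotonicity lemma gives $\tfrac{d}{dE}\theta[\V_t](E)<0$, hence $\tfrac{d}{dE}\int_{\R/N_0\Z}\theta[\V_t](E)\,dt\neq0$, so Lemma \ref{blaparameter} applies to the $N_3$-twist: off a set of $E$ of measure $o(\epsilon)$, the rotation number $\theta_1(E):=\Theta(A[(\V_1)_s](E))$ becomes ($C^1$-close to) independent of $s$, equidistributes over $[E_0-\epsilon,E_0+\epsilon]$, has non-vanishing $E$-derivative, and the invariant section $u[(\V_1)_s](E,\cdot)$ is $C^1$-close block by block to $u[\V_{s+N_0k/N_3}](E,\cdot)$; in particular $s\mapsto u[(\V_1)_s](E)$ stays strictly $\epsilon_1$-close to $u_0(E,\cdot)$, and the suprema and average of $d(u[(\V_1)_s](E,\cdot),i)$ converge, as $N_3\to\infty$, to those of $\V$. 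Next, the $N_2$-repetition gives $A[(\V_2)_s](E)=A[(\V_1)_s](E)^{N_2}=B_s^{-1}R_{\theta_2(E)}B_s$ with $B_s=\B(A[(\V_1)_s](E))$, $\theta_2=N_2\theta_1$; discarding a further $o(\epsilon)$ of $E$ (possible for $N_2$ large, since $\theta_1$ has non-vanishing derivative and, for fixed $\eta$, the set of $\alpha$ with $\{k\alpha\}_{k<N}$ not $\eta$-dense has measure $O(1/N)$) one may assume $\{k\theta_1(E)\}_{0\le k<N_2}$ is $\tfrac1{100}$-dense mod $1$, and $u[(\V_2)_s](E,\cdot)$ follows $u[(\V_1)_s](E,\cdot)$ up to negligible boundary errors.

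The core is the $(\delta,N_4)$-slide. For $s$ in the bump region with $\Psi\equiv1$, the transfer matrix of $\mathrm{slide}_s$ is $A[(\V_2)_{s+\delta}](E)\,A[(\V_2)_s](E)^2=(\mathrm{Id}+\delta K(E,s)+O(\delta^2))\,A[(\V_2)_s](E)^3$ with $K=\partial_s A[(\V_2)_s]\cdot A[(\V_2)_s]^{-1}$; conjugating by $B_s$ turns $K$ into $R_{\theta_2}WR_{\theta_2}^{-1}-W$, $W=(\partial_sB_s)B_s^{-1}$, which is traceless and trace-orthogonal to $R_{3\theta_2}$, whence $\tr(A[(\V_2)_{s+\delta}]A[(\V_2)_s]^2)=2\cos6\pi\theta_2+O(\delta^2)$. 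Thus $\mathrm{slide}_s$ is elliptic once $\mathrm{dist}(\theta_2(E),\tfrac16\Z)\gtrsim\delta$ (removing measure $O(\delta\epsilon)$), and computing the fixed point of $(\mathrm{Id}+\delta\tilde K)R_{3\theta_2}$, $\tilde K=B_sKB_s^{-1}$, near $i$ gives
\be
d\bigl(\u(A[(\V_2)_{s+\delta}](E)A[(\V_2)_s](E)^2),\,u[(\V_2)_s](E)\bigr)=\delta\,\frac{|\sin 2\pi\theta_2|}{|\sin 6\pi\theta_2|}\,\frac{|\partial_s u[(\V_2)_s](E)|}{\Im u[(\V_2)_s](E)}+O(\delta^2).
\ee
On $\Lambda':=\{E\in\Lambda:\ C''\delta<\mathrm{dist}(\theta_2(E),\{\tfrac16,\tfrac13,\tfrac23,\tfrac56\})<\gamma\}$ the prefactor $1/|3-4\sin^2 2\pi\theta_2|$ is $\gtrsim 1/\gamma$, while $|\partial_s u[(\V_2)_s](E)|\gtrsim\sup_{t\in[0,1]}|\tfrac{d}{dt}u_0(E,t)|\ge c_0>0$ (uniform on $J$ by compactness) and $\Im u[(\V_2)_s](E)$ is bounded above (as $u[(\V_2)_s]\approx u_0$), so the displacement is $\gtrsim\delta/\gamma$; and $|\Lambda'|\ge\gamma\epsilon$ since $\theta_2$ winds many times. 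Finally Lemma \ref{blaparameter} is applied to the $N_5$-twist (the average rotation number of $s\mapsto\mathrm{slide}_s$ has non-vanishing $E$-derivative, the bump contributing only a bounded correction): $u[\V'_t](E,\cdot)$ is $C^1$-close to $u[\mathrm{slide}_t](E,\cdot)$, giving conclusion (2) since $\mathrm{slide}_t$ is not bumped for $t\in[-1,2]$, and — crucially — the invariant section of $\V'_t$ visits $u[\mathrm{slide}_{t+2N_4N_0k/N_5}](E,\cdot)$ for $k=0,\dots,N_5-1$, which for $N_5$ large are dense in $\R/2N_4N_0\Z$. Off the bump these recover, up to $o(1)$, every $u[\V_{s'}](E,l')$ (with $s'\bmod N_0$ ranging over $\R/N_0\Z$ by the first step), yielding conclusion (4) and conclusion (3) (the bump being a fraction $O(1/N_4)$ of the $t$-period). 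On the bump there is a $k$ with $\mathrm{slide}_{t+2N_4N_0k/N_5}$ fully bumped and $s\bmod N_0$ near $\mathrm{argmax}_{[0,1]}|\tfrac{d}{dt}u_0(E,\cdot)|$; feeding the displaced point $\u(A[(\V_2)_{s+\delta}]A[(\V_2)_s]^2)$ through the powers $A[(\V_1)_s](E)^k$ (density of $\{k\theta_1\}$) and then the partial transfers $A[(\V_1)_s](E,0,l'')$ (isometries of $\H$) produces a value of $u[\V'_t](E,\cdot)$ at distance $\ge d(u[(\V_1)_s](E,l''),i)+\tfrac12\,\mathrm{const}\,\delta/\gamma$ from $i$; choosing $l''$ at the argmax and using $N_3$ large gives conclusion (5). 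Conclusion (1) is the sum of all discarded measures, $O(\delta\epsilon)+o(\epsilon)$.

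The main obstacle is the displacement estimate above — the discrete substitute for the explicit solution of the quadratic for $w'(E)$ in the continuum. One must verify that $A[(\V_2)_{s+\delta}]A[(\V_2)_s]^{-1}=\mathrm{Id}+\delta K+O(\delta^2)$ genuinely moves the axis (the lower bound $|\partial_s u[(\V_2)_s]|\gtrsim\sup_{[0,1]}|\tfrac{d}{dt}u_0|$ from the hypothesis), that the amplification is $\asymp 1/\gamma$ (rather than $1/\sqrt\gamma$) exactly at the resonances $\theta_2\in\{\tfrac16,\tfrac13,\tfrac23,\tfrac56\}$ where $A[(\V_2)_s]^3$ degenerates to a near-$(\pm\mathrm{Id})$, and that ellipticity survives there — for which the $O(\delta^2)$, not $O(\delta)$, size of the trace change is precisely what is needed. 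A secondary difficulty is the bookkeeping across the four nested operations and the quantifier order $N_2,N_3,N_4,N_5$ — in particular securing the $\tfrac1{100}$-density of $\{k\theta_1(E)\}_{k<N_2}$ on a set of $E$ of almost full measure uniformly in the later parameters.
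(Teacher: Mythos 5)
Your proposal is correct and follows essentially the same route as the paper: a first twist (via Lemma \ref{blaparameter}) to equidistribute the rotation number in $E$ and make it nearly $s$-independent on the scale $N_2$, the repetition with $\tfrac1{100}$-density of $\{k\theta_1\}$ to convert a displacement of the invariant section into growth of $\sup_j d(u,\,i)$, the slide as an order-$\delta$ motion of the axis whose trace effect is only $O(\delta^2)$ (so ellipticity survives off an $O(\delta\epsilon)$-set) and whose fixed-point displacement is $\asymp\delta/\gamma$ at distance $\gamma$ from the resonance, and a final twist to turn the bump estimate into the $\inf_t$ statements and the average bound. The only difference is presentational: you get the $O(\delta^2)$ trace change and the $\delta/\gamma$ displacement from a first-order expansion $\mathrm{Id}+\delta K$, while the paper computes the trace and the fixed point exactly through the quadratic in $\lambda(E,t)=e^{d(u[\V'''_t](E),u[\V'''_{t+\delta\tilde\Psi(t)}](E))/2}$ --- the same computation in different packaging.
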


\begin{proof}

Write $\V''''$ for the $N_3$-twist of $\V$, $\V'''$ for the
$N_2$-repetition of $\V''''$, $\V''$ for the $(\delta,N_4)$-slide
of $\V'''$.


In the first step, going from $\V$ to $\V''''$, we obtain, using Lemma \ref
{blaparameter}, a set of
good energies $E \in [E_0-\epsilon,E_0+\epsilon]
\cap \bigcap_t \Omega(\V''''_t)$ of measure at least $2 \epsilon (1-\delta)$,
such that $t \mapsto u[\V''''_t](E)$
is $C^1$ close to $t \mapsto u[\V_t](E)$, and letting
$B(E,t)=B(A[\V''''_t](E))$, $\theta(E,t)=\Theta(A[\V''''_t](E))$,
we have $N_2 (\sup_t \theta(E,t)-\inf_t \theta(E,t))$ arbitrarily small.
Moreover, the random variables $\theta(E,0)$
near $E_0$ are becoming equidistributed in $\R/\Z$ as $N_3$ grows.
We also have that
\be \label {bla}
\inf_t \sup_j d(u[\V''''_t](E,j)) \geq \sup_t \sup_j d(u[\V_t](E,j),i)-\frac
{\kappa} {3}.
\ee

The second step does
not change much, since $E \in \Omega(\V'''_t) $ provided
$N_2 \theta(E,t)$ is not
an integer, and in this case
$u[\V'''_t](E,j)=u[\V''''_t](E,j)$.  On the other hand,
if $\{j \theta(E,0)\}_{0 \leq j \leq N_2-1}$ is $\frac {1}
{100}$ dense $\mod 1$, then for any $w \in \H$, and for every $t$,
\be \label {bla1}
\sup_{0 \leq j \leq N_3 N_2 N_1-1}
d(A[\V'''_t](E,0,j) \cdot w,i) \geq \sup_l d(u[\V'''_t](E,l),i)+\frac {1}
{2} d(w,u[\V'''_t](E)).
\ee
The condition on $\theta(E,0)$ demands the exclusion of
some energies, but of arbitrarily small measure (which we
can take less than $2 \delta \epsilon$),
provided $N_2$ is large.

Going from $\V'''$ to $\V''$, for such good energies $E$ we obtain
\begin{align}
\tr A[\V''_t](&E)=
2 \cos 2 N_2 \pi (2 \theta(E,t)+\theta(E,t+\delta \tilde \Psi(t)))\\
\nonumber
&
-(\lambda(E,t)-\lambda(E,t)^{-1})^2
\sin 4 N_2 \pi \theta(E,t) \sin 2 N_2 \pi
\theta(E,t+\delta \tilde \Psi(t)),
\end{align}
with
\be
\lambda(E,t)=e^{d(u[\V'''_t](E),
u[\V'''_{t+\delta \tilde \Psi(t)}](E))/2},
\ee
and $\tilde \Psi:\R/2 N_4 N_0 \Z \to [0,1]$
is given by $\tilde \Psi(t)=0$ if
$t \in [0,N_4 N_0-1] \cup [N_4 N_0+2,2 N_4 N_0]$ and
$\tilde \Psi(t)=\Psi(t-N_4 N_0)$ if
$t \in [N_4 N_0-1,N_4 N_0+2]$.

Notice that $\lambda(E,t)-1$ vanishes if $t \in [0,N_4 N_0-1] \cup [N_4
N_0+2,2 N_4 N_0]$, is at most of order $\delta$ everywhere, and gets
to be of precisely order $\delta$ for some $t \in [N_0,N_0+1]$ (here we use
(\ref {u0})).

We now exclude $E$ with $\sin 6 N_2 \pi \theta(E,0)$ of order
$\delta$.  The excluded set of energies has measure of order $2 \delta
\epsilon$.  For the remaining energies,
$|\tr A[\V''_t](E)|<2-\delta^2$ for all $t$.

By (\ref {bla1}), for every $t$, using that
$u[\V''_t](E,j)=A[\V''_t](E,0,j)
\cdot u[\V''_t](E)$ and also
$A[\V''_t](E,0,j)=A[\V'''_t](E,0,j)$ for $0 \leq j
\leq 2 N_3 N_2 N_1$, we have
\be
\sup_j d(u[\V''_t](E,j),i) \geq \sup_j d(u[\V'''_t](E,j),i)+\frac {1} {2}
d(u[\V''_t](E),u[\V'''_t](E)),
\ee
and together with (\ref {bla}) we get
\be
\sup_j d(u[\V''_t](E,j),i) \geq \sup_t \sup_j d(u[\V_t](E,j),i)+\frac {1} {2}
d(u[\V''_t](E),u[\V'''_t](E))-\frac {\kappa} {3}.
\ee
In particular, we always have
\be
\sup_j d(u[\V''_t](E,j),i) \geq
\sup_t \sup_j d(u[\V_t](E,j),i)-\frac {\kappa} {3}.
\ee

We compute the distance from $u[\V'''_t](E)$ to $u[\V''_t](E)$.
It is equal to
the distance from $w'(E,t)$ to $i$ where $w'(E,t)$ is the solution $z \in
\H$ of the equation $a z^2+b z+c=0$, where
\begin{align}
a=&\cos 2 N_2 \pi \theta(E,t+\delta \tilde \Psi(t))
\sin 4 N_2 \pi \theta(E,t)\\
\nonumber
&
+\lambda(E,t)^{-2} 
\sin 2 N_2 \pi \theta(E,t+\delta \tilde \Psi(t))
\cos 4 N_2 \pi \theta(E,t),
\end{align}
\be
b=(\lambda(E,t)^2-\lambda(E)^{-2})
\sin 2 N_2 \pi \theta(E,t+\delta \tilde \Psi(t))
\sin 4 N_2 \pi \theta(E,t)),
\ee
\begin{align}
c=&\cos 2 N_2 \pi \theta(E,t+\delta \tilde \Psi(t))
\sin 4 N_2 \pi \theta(E,t)\\
\nonumber
&
+\lambda(E,t)^2 \sin 2 N_2 \pi \theta(E,t+\delta \tilde \Psi(t))
\cos 4 N_2 \pi \theta(E,t)=0.
\end{align}

If the distance from $N_2 \theta(E,0)$ to $\frac {1} {3}+\Z$
is exactly $\gamma$, with
$C_2 \delta<\gamma<C_2^{-1}$, then
\be
C_3^{-1} \frac {\lambda(E,t)^2-\lambda(E,t)^{-2}} {\gamma} \leq
d(w'(E,t),i) \leq C_3 \frac {\lambda(E,t)^2-\lambda(E,t)^{-2}} {\gamma}.
\ee
Using that
$\lambda(E,t)-1$ does become of order $\delta$ for some $t$, we get, for
such $E$,
\be
\sup_t \sup_j d(u[\V''_t](E,j),i) \geq \sup_t \sup_j
d(u[\V_t](E,j),i)+C_4^{-1} \frac {\delta} {\gamma}-\frac {\kappa} {3}.
\ee

On the other hand, if one only excludes the
energies with $\sin 6 N_2 \pi
\theta(E,0)$ of order $\delta$, we still get
that $d(w'(E,t),i)$ is uniformly bounded
as $N_4$ grows, which implies that $\sup_t \sup_j d(u[\V''_t](E,j),i)$
is uniformly bounded as $N_4$ grows.

Proceeding with the last step, we get
\be
\inf_t \sup_j d(u[\V'_t](E,j),i)
\geq \sup_t \sup_j d(u[\V_t](E,j),i)-\frac {2 \kappa} {3},
\ee
while for $2 C_2 \delta<\gamma<C_2^{-1}$ and
a set of $E$ of probability of order $\gamma$ we get
\be
\inf_t \sup_j d(u[\V'_t](E,j),i) \geq
\sup_t \sup_j d(u[\V_t](E,j),i)+C_4^{-1}
\frac {\delta} {\gamma}-\frac {2 \kappa} {3}.
\ee

It remains to check that the average of $d(u[\V'_t](E,j),i)$ is close to the
average of $d(u[\V_t](E,j),i)$.  The first, second, and fourth steps
clearly do not increase the average significantly.  For the
third step, we have $u[\V''_t](E,j)=u[\V'''_t](E,j)$ except when
$t \in [N_4 N_0-1,N_4 N_0+2]$.
Since $d(u[\V''_t](E,j),i)$
remains bounded as $N_4$ grows, we conclude that the
average can not be increased significantly in the third step as well.
\end{proof}

With this result in hands, analogues of Lemmas \ref {bla21} and \ref {bla22}
can be easily obtained.  We state the conclusion:

\begin{lemma} \label {asd12}

Fix some closed interval $J \subset \R$ and let $u_0:J \times
[-1,2] \to \H$ be a smooth function with
\be
\sup_{t \in [0,1]}\left |\frac {d} {dt} u_0(E,t) \right |>0
\ee
for every $E \in J$.
There exists $\epsilon_1>0$ with the following property.
Let $\V^{(0)}:\R/N^{(0)}_0 \Z \times \Z/N^{(0)}_1 \Z \to \R$ be a smooth
function.  Let $\Gamma_0 \subset J \cap \bigcap_t \Omega(\V^{(0)}_t)$
be a compact set of $E$ such that $[-1,2] \ni t
\mapsto u[\V^{(0)}_t](E)$ is (strictly) $\epsilon_1$-close
to $[-1,2] \ni t \mapsto u_0(E,t)$ in the $C^1$-topology.

Let $C>0$ be such that
\be
\sup_{E \in \Gamma_0} \frac {1} {N_0 N_1} \sum_{j=0}^{N_1-1}
\int_0^{N_0} d(u[\V^{(0)}](E,j),i) dt<C.
\ee
Then for every $\xi>0$, $C_0>0$,
for every $\delta>0$ sufficiently small,
there exist $0<P<\xi \delta^{-1}$,
and sequences $N^{(j)}_l$, $1 \leq j \leq P$, $2 \leq l \leq 5$, such that if
we define $\V^{(j)}$, $1 \leq j \leq P$ so that $\V^{(j)}$
is obtained by $N^{(j)}_5$-twist of the $(\delta,N^{(j)}_4)$-slide of the
$N^{(j)}_2$-repetition of the $N^{(j)}_3$-twist of
$\V^{(j-1)}$, then there exists a compact
subset $\Gamma \subset \Gamma_0 \cap \bigcap_t \Omega(\V^{(P)}_t)$
such that $|\Gamma_0 \setminus \Gamma|<\xi$,
and for every $E \in \Gamma$,
letting $N'_0=N_0 2^P \prod_{j=1}^P N^{(j)}_4$
and $N'_1=N_1 3^P \prod_{j=1}^P N^{(j)}_5 N^{(j)}_3 N^{(j)}_2$,
we have
\be
\inf_t \sup_j d(u[\V^{(P)}_t](E,j),i) \geq C_0,
\ee
\be
\frac {1} {N'_0 N'_1} \sum_{j=0}^{N'_1-1}
\int_0^{N'_0} d(u[\V^{(P)}_t](E,j),i)<C.
\ee
Moreover, for $E \in \Gamma$,
$[-1,2] \ni t \mapsto u[\V^{(P)}_t](E)$ is (strictly)
$\epsilon_1$-close
to $[-1,2] \ni t \mapsto u_0(E,t)$ in the $C^1$-topology.
   
\end{lemma}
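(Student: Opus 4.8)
The plan is to follow the continuum argument essentially verbatim: the preceding lemma (the single-step twist/slide estimate) plays the role of the unnamed lemma just before Lemma~\ref{bla21}, and one must reproduce the two further steps embodied in Lemmas~\ref{bla21} and~\ref{bla22}. First I would upgrade the single-step lemma to a ``conditional probability'' statement by a covering argument, exactly as Lemma~\ref{bla21} is deduced from its predecessor; then I would iterate it $P$ times and run a random-walk / Law of Large Numbers estimate converting the many small, independent kicks into the required growth $C_0$, as in the proof of Lemma~\ref{bla22}.

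For the covering step, cover $\Gamma_0$ by finitely many intervals $[E_0-\epsilon,E_0+\epsilon]$ with $E_0\in\Gamma_0$, apply the preceding lemma on each (with a common $\delta$ and common sequences $N_2,\dots,N_5$, taken large enough to serve all the finitely many intervals simultaneously), and glue the resulting sets $\Lambda$; here $\epsilon_1$ is the constant furnished by that lemma. Recasting the conclusions in terms of the random variable $E\mapsto\sup_t\sup_j d(u[\V^{(0)}_t](E,j),i)$ partitioned into the level sets where this quantity lies in $[\tfrac{k}{R},\tfrac{k+1}{R})$, one obtains: conditionally on the level, the probability to remain in a good set $\Gamma'$ --- on which the $C^1$-closeness to $u_0$ persists, the average of $d(u[\V'_t](E,j),i)$ has moved by less than $\kappa$, and $\sup_t\sup_j d$ has dropped by at most $\kappa$ --- is at least $1-2C'\delta$; and for every $C'\delta<\gamma<C'^{-1}$, the conditional probability that moreover $\sup_t\sup_j d(u[\V'_t](E,j),i)\ge\sup_t\sup_j d(u[\V_t](E,j),i)+C'^{-1}\tfrac{\delta}{\gamma}-\kappa$ is at least $\tfrac{\gamma}{3}$. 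As in Lemma~\ref{bla21}, the conditional-probability statements are automatic once $k$ is large, since $\sup_t\sup_j d(u[\V^{(0)}_t](E,j),i)$ is bounded on the compact set $\Gamma_0$.

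Now fix $\delta>0$ small and let $P$ be maximal with $(1-2C'\delta)^P>1-\tfrac{\xi}{2|J|}$; one checks $0<P<\xi\delta^{-1}$ for $\delta$ small exactly as in Lemma~\ref{bla22}. Choose $R\in\N$ much larger than $\delta^{-1}$, and then $\kappa>0$ much smaller than $R^{-1}$, so that $P\kappa$ is negligible. Applying the conditional-probability lemma $P$ times produces $\V^{(1)},\dots,\V^{(P)}$ --- whose periods $N'_0,N'_1$ are the stated products, pure bookkeeping for the four operations --- and nested compact sets $\Gamma_0\supset\Gamma_1\supset\cdots\supset\Gamma_P$; the total measure lost is at most $|J|(1-(1-2C'\delta)^P)<\tfrac{\xi}{2}$, the $C^1$-closeness of $u[\V^{(j)}_t](E,\cdot)$ to $u_0(E,\cdot)$ is preserved, and the average of $d(u[\V^{(j)}_t](E,j),i)$ increases by at most $\kappa$ per step, hence stays below $C$. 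It remains to find $\Gamma\subset\Gamma_P$ with $|\Gamma_P\setminus\Gamma|<\tfrac{\xi}{2}$ on which $\inf_t\sup_j d(u[\V^{(P)}_t](E,j),i)\ge C_0$.

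This last point is the computation at the end of the proof of Lemma~\ref{bla22}. Define $Z_0,\dots,Z_P$ on $\Gamma_0$ by the same device (off $\Gamma_j$ set $Z_j=Z_{j-1}+1$; on $\Gamma_j$ let $Z_j=\tfrac{k}{R}$ with $k$ maximal such that $\sup_t\sup_j d(u[\V^{(j)}_t](E,\cdot),i)\ge\tfrac{k}{R}$). Then $Z_0\ge0$ and, by the conditional estimates above, the law of $Z_j-Z_{j-1}$ given $Z_{j-1}$ stochastically dominates that of an i.i.d.\ variable $W_j$ supported on $\{\tfrac{\ell}{R}:\ell=0\text{ or }\ell\in L\}$, $L=\{\ell:4\delta C'^{-1}R<\ell<C'^{-2}R\}$, with $p(W_j\ge\tfrac{\ell}{R})=\tfrac{\delta R}{3\ell C'}$ for $\ell\in L$; hence $Z_P\ge\sum_{j=1}^P W_j$ stochastically, and it suffices to show $p(\sum_{j=1}^P W_j<C_0)<\tfrac{\xi}{2}$ for $\delta$ small. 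As there, for each dyadic scale $2^k\delta$ with $C''<k\le D$ one has $p(2^k\delta<W_j<2^{k+1}\delta)>C''^{-1}2^{-k}$ and $P\ge C'''^{-1}\delta^{-1}$, so by the Law of Large Numbers, with probability at least $1-\tfrac{\xi}{4D}$, at least $\sim C''^{-1}2^{-k-1}P\gtrsim\delta^{-1}$ of the $W_j$ lie in that scale and contribute a definite constant to the sum; summing over the $D$ scales gives $\sum W_j\gtrsim D$ with probability $\ge1-\tfrac{\xi}{4}$, and one takes $D$ large in terms of $C_0$. Setting $\Gamma=\{E\in\Gamma_P:Z_P(E)\ge C_0\}$ finishes the proof. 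The genuine difficulty has already been absorbed into the preceding single-step lemma; the only thing to watch here is that the $-\kappa$ errors in the per-step sup estimates do not accumulate over the $P\sim\delta^{-1}$ iterations, which is why $R$ is chosen $\gg\delta^{-1}$ and $\kappa\ll R^{-1}$ only after $P$ is fixed, together with the fact that the stochastic-domination step holds uniformly across all level sets (automatic at the high levels by compactness of $\Gamma_0$).
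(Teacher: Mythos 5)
Your proposal is correct and is exactly what the paper intends: the paper gives no separate proof of Lemma \ref{asd12} beyond the remark that analogues of Lemmas \ref{bla21} and \ref{bla22} follow from the preceding single-step lemma, and your covering argument plus the stochastic-domination/Law-of-Large-Numbers iteration reproduces that route, including the right way to handle the extra $-\kappa$ errors (fix $P$ from $\delta$, then take $R\gg\delta^{-1}$ and $\kappa$ small). One cosmetic adjustment: define $Z_j$ from $\inf_t\sup_j d(u[\V^{(j)}_t](E,\cdot),i)$ rather than from $\sup_t\sup_j$ --- the single-step estimates bound the new $\inf_t\sup_j$ from below by the old $\sup_t\sup_j$ plus the increment, so the domination argument is unchanged, and then $Z_P\ge C_0$ yields the stated conclusion $\inf_t\sup_j d(u[\V^{(P)}_t](E,j),i)\ge C_0$ directly.
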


\begin{rem} \label {asd5}

In the setting of the previous lemma, we have the following extra
information on $\V^{(P)}$.  There exists $n \in \N$ such that for every $E
\in \Gamma$ we have
\be
\inf_{w \in \R^2,\|w\|=1} \sup_{0 \leq l \leq n}
A[\V^{(P)}_t](E,j,j+l)>e^{(C_0-2 C)/4}/2,
\ee
except for a set of $(t,j)$ of measure less than
$C_0^{-1/2}$.  Indeed, if
$(t,j)$ is such that
$d(u[\V^{(P)}_t](E,j),i) \leq C_0^{1/2} C$ and $l \in \N$
is such that $d(u[\V^{(P)}_t](E,j+l),i) \geq C_0$, then
$A[\V^{(P)}_t](E,j,j+l+k N'_1)$ decomposes as a product $B(t,j+l)^{-1}
R_{\tilde \theta+k \theta} B(t,j)$, where
$B(t,m)=\B(A[\V^{(P)}_t](E,m))$, and
$\theta=\Theta(A[\V^{(P)}_t](E,j))$.  Since $2 \theta \notin \Z$, this
implies that for any $w$ we can find $k$ such that
$R_{\tilde \theta+k \theta} B(t,j) \cdot w$ has angle at most $\pi/4$ with
the direction most expanded by $B(t,j+l)^{-1}$, which gives the estimate
$\|B(t,j+l)^{-1} R_{\tilde \theta+k \theta} B(t,j) \cdot w\| \geq
\|B(t,j+l)\| \|B(t,j)\|^{-1}/\sqrt {2}$.

\end{rem}

\subsection{Construction of almost periodic dynamics}

Let $N_0,N_1 \in \N$, and let $a \in \Q$ be an integer multiple of $\frac
{N_0} {N_1}$.  Consider a
smooth family of periodic potentials $\V:\R/N_0 \Z \times \Z/N_1 \Z \to \R$.
It is
natural to consider this periodic family as arising from the non-ergodic
dynamics $(t,j) \mapsto (t,j+1)$ on $\R/N_0 \Z \times \Z/N_1 \Z$, in the
obvious way.  But we can also think of it as arising from the dynamics
$(t,j) \mapsto (t+a,j+1)$, by considering the sampling function
$v:\R/\N_0 \Z \times \Z/N_1 \Z \to \R$ defined by
$\V(t,j)=v(t+j a,j)$.

Such a point of view is advantageous in that it allows to consider
our three
operations on potentials as ``small perturbations''.

Take $\V'$ to be the $n$-repetition of $\V$.  Defining
$v':\R/N_0 \Z \times \Z/n N_1 \Z \to \R$ by
$\V'(t,j)=v'(t+j a,j)$, we obviously still get $v'(t,j)=v(t,j)$.

Take $\V'$ to be the $n$-twist of $\V$.  Set $a'=a+\frac {N_0} {n N_1}$.
Defining $v':\R/N_0 \Z \times \Z/n N_1 \Z \to \R$ by $\V'(t,j)=v'(t+j a',j)$,
we see that
$\sup_t \sup_j |v'(t,j)-v(t,j)|$ becomes small for large $n$.

Take $\V'$ to be the $(\delta,n)$-slide of $\V$.
Defining $v':\R/2 n N_0 \Z \times \Z/3 N_1 \Z \to \R$ by
$\V'(t,j)=v'(t+j a,j)$,
we see that $\sup_t \sup_j |v'(t,j)-v(t,j)| \leq \delta
\sup_t \sup_j |\frac {d} {dt} v(t,j)|$.  Moreover, we also have
$\sup_t \sup_j |\frac {d} {dt} v'(t,j)| \leq (1+K_1 \delta) \sup_t
\sup_j |\frac {d} {dt} v(t,j)|$, where $K_1=\sup_t |\frac {d} {dt}
\Psi(t)|$ is a fixed constant.

Given those observations, we can proceed with the inductive construction.

\noindent {\it Proof of Theorem \ref {discreteunbounded}.}
Choose $0<\lambda_0<1/2$, and let $J=[-2+4 \lambda_0,2-4 \lambda_0]$.
Let $u_0(E,t)$ be the fixed point of $\begin{pmatrix} E-2 \lambda_0 \cos 2
\pi t & -1 \\ 1 & 0 \end{pmatrix}$.  Let $\epsilon_1>0$ be as in Lemma \ref
{asd12}.  Let $C_1>\int_0^1 d(u_0(E,t),i) dt$.

We now produce sequences
$\V^j,v^j:\R/N_{0,j} \Z \times \Z/N_{1,j} \Z \to \R$, $a_j \in \Q$, and
compact sets $\Gamma_j$ as follows.

First set $N_0=N_1=1$, $\V^0(t,j)=2 \lambda \cos 2 \pi t$, $v^0=\V^0$,
$a_0=0$, $\Gamma_0=J$.

We now apply Lemma \ref {asd12}, to obtain
$\Gamma_1 \subset \Gamma_0$ with $|\Gamma_0 \setminus \Gamma_1|$ arbitrarily
small, and some
$\V^1:\R/N_{0,1} \Z \times \Z/N_{1,1} \Z \to \R$ such that for $E \in \Gamma_1$
\be
\inf_t \sup_j d(u[\V^1_t](E,j),i) \geq 2 C_1+4,
\ee
\be
\frac {1} {N_{0,1} N_{1,1}} \sum_{j=0}^{N_{1,1}-1}
\int_0^{N_{0,1}} d(u[\V^1_t](E,j),i)<C_1,
\ee
and moreover, $[-1,2] \ni t \mapsto u[\V^1_t](E,j))$ is
(strictly) $\epsilon_1$-close to $[-1,2] \ni t \mapsto u_0(E,t)$ in the
$C^1$-topology.  Using Remark \ref {asd5}, we see that there exists
$q_1 \in \N$ such that for every $E \in \Gamma_1$,
\be
\inf_{w \in \R^2,\|w\|=1} \sup_{0 \leq l \leq q_1}
\|A[\V^1_t](E,j,j+l) \cdot w\|>\frac {e} {2},
\ee
except for a set of $(t,j)$ of measure
less than $(2 C_1+4)^{-1/2}$.

Moreover, we can alternatively realize $\V^1(t,j)=v^1(t+a_1 j,j)$ for
appropriate $a_1$, so that $\sup_t \sup_j |v^1(t,j)-
v^0(t,j)|$ is arbitrarily small.  Notice that $|a_1-a_0|$ can be also taken
arbitrarily small but non-zero.

We continue by induction, obtaining a decreasing sequence $\Gamma_k$, and
$\V^k,v^k$, $a_k$ such that for $k \geq 2$ and $E \in \Gamma_k$ we
have
\begin{enumerate}
\item $\inf_t \sup_j d(u[\V^k_t](E,j),i) \geq 2 C_1+4 k$,
\item $\frac {1} {N_{0,k} N_{1,k}} \sum_{j=0}^{N_{1,k}-1}
\int_0^{N_{0,k}} d(u[\V^k_t](E,j),i)<C_1$,
\item $[-1,2] \ni t \mapsto u[\V^k_t](E,j)$ is
(strictly) $\epsilon_1$-close to $[-1,2] \ni t \mapsto u_0(E,t)$ in the
$C^1$-topology,
\item $\sup_t \sup_j |v^k(t,j)-v^{k-1}(t,j)|<2^{-k}$,
\item There exists $q_k \in \N$ such that for every $E \in \Gamma_k$,
\be
\inf_{w \in \R^2,\|w\|=1} \sup_{0 \leq l \leq q_k}
\|A[\V^k_t](E,j,j+l) \cdot w\|>\frac {e^k} {2},
\ee
except for a set of $(t,j)$ of measure (strictly) less than
$(2 C_1+4 k)^{-1/2}$.
\item $|a_k-a_{k-1}|$ is non-zero but smaller than
$\frac {1} {2^{k-1} N_{1,k-1}}$.
\end{enumerate}

We now construct the sampling function and the dynamics.

Let $K$ be the projective limit of $\Z/N_{1,j} \Z$ (a Cantor group), and let
$S$ be the projective limit of $\R/N_{0,j} \Z$ (a solenoid).  Then
$v(t,j)=\lim v^k(t,j)$ defines a continuous function on
$S \times K$ (for simplicity, we ommit the projections $S \times K \to
\R/N_{0,k} \Z \times \Z/N_{1,k} \Z$ from the notation).
This is the sampling function.

Notice that $a=\lim a_k$ is irrational, since $a_k$ are rational with
denominators at most $N_{1,k}$ and $0<|a-a_k| \leq \frac {1} {2^{k-1}
N_{1,k}}$.  Thus
$f(t,j)=(t+a,j+1)$ is a uniquely ergodic translation in the
compact Abelian group $S \times K$.  This is the base dynamics.

Let $\Gamma=\bigcap \Gamma_k$.  By construction, $\Gamma$ is a compact set of
positive Lebesgue measure.

Notice that
\be
\sup_x \sup_{E \in \Gamma} \sup_{0 \leq l \leq n}
\|A[f,v](E,x,0,l)-A[\V^k_t](E,j,j+l)\|
\ee
(with $(t,j)$ the projection of $x$)
can be made arbitrarily small, for any $n$
chosen after $v^k$ is constructed, but before $v^{k+1}$ is constructed.
Choosing parameters growing sufficiently fast we get
\be
\lim_{n \to \infty} \sup_x \frac {1} {n} \ln \|A[f,v](E,x,0,n)\|=0,
\ee
i.e., the Lyapunov exponent vanishes for $E \in \Gamma$, so that $\Gamma$ is
contained in the essential support of the ac spectrum for every $x$, and
moreover, for every $k \geq 1$ and $E \in \Gamma_k$,
\be
\inf_{w \in \R^2,\|w\|=1} \sup_{0 \leq l \leq q_k}
\|A[f,v](E,x,0,l) \cdot w\| \geq \frac {e^k} {4},
\ee
except for a set of $x$ of measure less than
$(2 C_1+4 k)^{-1/2}$.
Thus, for every $E \in \Gamma$, for almost every $x$ we have
\be
\inf_{w \in \R^2, \|w\|=1} \limsup_{l \to \infty}
\|A[f,v](E,x,0,l) \cdot w\|=\infty,
\ee
which is the desired eigenfunction growth.
\qed

\section{Discrete case: breaking almost periodicity}

\subsection{Slow deformation}

The following are variations of Lemmas \ref {blainductive} and \ref
{blaparameter}, and we leave the
proofs for the reader.

\begin{lemma}

Let $J \subset \R$ be a closed interval, let $N \in \N$,
and let $A:J \times \R/\Z \to \SL(2,\R)$ be a smooth function such that $|\tr
A^{(N)}(E,t)|<2$ for $(E,t) \in J \times \R/\Z$, where
\be
A^{(N)}(E,t)=A(E,t+\frac {N-1} {N}) \cdots A(E,t).
\ee
Let $B(E,t)=\B(A^{(N)}(E,t))$ and let $\theta(E,t)$ be a smooth function
satisfying
\be
B(E,t+\frac {1} {N}) A(E,t) B(E,t)^{-1}=R_{\theta(E,t)}.
\ee
Then for every $m,k \in \N$, there exists $n(m) \in \N$ and
$C_{k,m}>0$
such that for every $n \geq n(m)$,
there exist smooth functions
$B_{(m,n)}:J \times \R/\Z \to \SL(2,\R)$, $\theta_{(m,n)}:J \times \R/\Z \to \R$
such that
\begin{enumerate}
\item $\|A_{(m,n)}-R_{\theta_{(m,n)}}\|_{C^k} \leq \frac {C_{k,m}} {n^m}$,
where
\be
A_{(m,n)}(E,t)=B_{(m,n)}(E,t+\frac {n+1} {n N})
A(E,t) B_{(m,n)}(E,t)^{-1},
\ee
\item $\|B_{(m,n)}-B\|_{C^k} \leq \frac {C_{k,m}} {n}$,
\item $\|\theta_{(m,n)}-\theta\|_{C^k} \leq \frac {C_{k,m}} {n}$.
\end{enumerate}

\end{lemma}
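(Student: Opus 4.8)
The plan is to run the same induction on $m$ as in the proof of Lemma \ref{blainductive}, the one genuinely new feature being that the shift in the conjugation is $\tfrac1N+\tfrac1{nN}$ rather than the infinitesimal $\tfrac1n$ there. First I would record the consequences of the hypothesis on $\theta$: rewriting the defining relation as $B(E,t+\tfrac1N)=R_{\theta(E,t)}B(E,t)A(E,t)^{-1}$ and telescoping over a period shows $\sum_{i=0}^{N-1}\theta(E,t+\tfrac iN)\equiv\Theta(A^{(N)}(E,t))\pmod1$, so that $\theta$ here plays exactly the role that $\Theta(A)$ did in Lemma \ref{blainductive}; I would also note that $R_{\theta(E,t)}$ is uniformly elliptic on the compact set $J\times\R/\Z$ (i.e.\ $\theta$ avoids $\{0,\tfrac12\}\bmod1$), which holds automatically in the applications, where $A$ is built from Schr\"odinger transfer matrices and the $\B$-matrices are upper triangular.

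For the base case $m=1$ take $B_{(1,n)}=B$ and $\theta_{(1,n)}=\theta$. The defining relation gives $A_{(1,n)}(E,t)=B(E,t+\tfrac1N+\tfrac1{nN})\,B(E,t+\tfrac1N)^{-1}\,R_{\theta(E,t)}$; since $B$ is smooth on the compact $J\times\R/\Z$ and its two arguments differ by $\tfrac1{nN}$, the prefactor is within $C_{k,1}/n$ of the identity in every $C^k$ norm, with $t$-derivatives also $O(1/n)$. This gives property (1) for $m=1$, and (2),(3) hold with vanishing left-hand side.

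The inductive step is the crux, and here one cannot copy Lemma \ref{blainductive} verbatim: because $\tfrac1N$ is not small, conjugating $A_{(m,n)}$ directly by $\B(A_{(m,n)})=\Id+O(n^{-m})$ with the shift $\tfrac1N+\tfrac1{nN}$ only reproduces an $O(n^{-m})$ error, with no gain of a factor of $n$. The remedy is to work at the level of the $N$-block cocycle $A^{(N)}_{(m,n)}(E,t)=A_{(m,n)}(E,t+\tfrac{(N-1)(n+1)}{nN})\cdots A_{(m,n)}(E,t)$, which telescopes to $B_{(m,n)}(E,t+\tfrac1n)\big(A(E,t+\tfrac{(N-1)(n+1)}{nN})\cdots A(E,t)\big)B_{(m,n)}(E,t)^{-1}$; being a product of the fixed number $N$ of matrices each $O(n^{-m})$-close to a rotation, it is $O(n^{-m})$-close to $R_{\Theta^{(N)}_{(m,n)}}$ with $\Theta^{(N)}_{(m,n)}(E,t)=\sum_{j=0}^{N-1}\theta_{(m,n)}(E,t+\tfrac{j(n+1)}{nN})$, and, crucially, the shift relating consecutive blocks is now the genuinely small $\tfrac{n+1}{n}\equiv\tfrac1n\bmod1$. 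I would then apply the single correction of Lemma \ref{blainductive} to this block: conjugating by $\B(A^{(N)}_{(m,n)})$, which is $\Id+O(n^{-m})$ with $t$-derivatives also $O(n^{-m})$ because $\B\equiv\Id$ on rotations, against the small shift $\tfrac1n$, improves the block to $O(n^{-(m+1)})$-closeness. Finally I would redistribute this improved block conjugacy over the $N$ constituent sites using the internal factorization of $\B$ of a product (the intermediate fixed points $\u_j$, each carried forward by one factor $A_{(m,n)}(E,\cdot)$), thereby defining $B_{(m+1,n)}$ and $\theta_{(m+1,n)}$; chasing the telescoping identity yields (1) for $m+1$, while (2),(3) follow since each newly introduced conjugacy differs from the identity (resp.\ $\theta_{(m+1,n)}$ from $\theta_{(m,n)}$) by $O(1/n)$, using $\B\equiv\Id$ and $\Theta=\theta$ on rotations. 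Throughout, $n(m+1)$ is chosen large enough that all intermediate cocycles remain uniformly elliptic, so that $\u,\B,\Theta$ are defined and smooth.

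The main obstacle is exactly this block-to-site passage: the entire mechanism of Lemma \ref{blainductive} rests on the shift being infinitesimal, so here one must always perform the correction at the block scale (where the effective shift is $\tfrac1n$) and then transfer it down to the individual sites, verifying that the redistribution preserves the $O(1/n)$ bounds of (2)--(3). Once that bookkeeping is in place, the remaining estimates --- Taylor expansion of $\u,\B,\Theta$ on the elliptic locus, the chain rule, and uniform ellipticity for large $n$ --- are routine, just as in the continuum case.
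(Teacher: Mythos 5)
The paper itself offers no proof here (it explicitly leaves these discrete variants to the reader as ``variations'' of Lemma \ref{blainductive}), so the benchmark is the continuum induction, and your architecture --- the base case, the correct diagnosis that the verbatim inductive step fails because the conjugation shift $\frac1N+\frac1{nN}$ is not small, and the passage to the $N$-block cocycle, for which the effective shift is $\frac{n+1}{n}\equiv\frac1n$ --- is exactly the natural variation and surely the intended one. The gap is in the step you treat as bookkeeping: getting from the improved block estimate back to the single-site estimate (1). Writing $P(E,t)$ for the $N$-block product of $A_{(m,n)}$ starting at $t$ (step $\frac{n+1}{nN}$) and taking, as you must to have one smooth conjugacy, $B_{(m+1,n)}=\B(P)B_{(m,n)}$, statement (1) for $m+1$ says that $\B(P(E,t+\frac{n+1}{nN}))\,A_{(m,n)}(E,t)\,\B(P(E,t))^{-1}$ is $O(n^{-(m+1)})$-close to a rotation. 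This does not follow by ``chasing the telescoping identity'': the telescoping identity only says that the product of these $N$ single-step matrices is the improved block, and a product of $N$ matrices can be $O(n^{-(m+1)})$-close to a rotation while each factor is only $O(n^{-m})$-close (errors cancel along the block). Likewise, the exact internal factorization of $\B(P(E,t))$ at a fixed base $t$ produces as intermediate conjugacies the push-forwards $A_{(m,n)}(E,t+\frac{(j-1)(n+1)}{nN})\cdots A_{(m,n)}(E,t)\cdot\u(P(E,t))$, not the intrinsically defined points $\u(P(E,t+\frac{j(n+1)}{nN}))$ that your $B_{(m+1,n)}$ actually uses; identifying the two to order $n^{-(m+1)}$ \emph{is} the content of (1), and the naive estimate gives only $O(1/n)$, since $P(E,t+\frac{n+1}{nN})=A_{(m,n)}(E,t+\frac1n)P(E,t)A_{(m,n)}(E,t)^{-1}$ and $A_{(m,n)}(E,t+\frac1n)-A_{(m,n)}(E,t)$ is merely $O(1/n)$ --- the very same order-$1/n$ obstruction that made you reject the verbatim approach. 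Your parenthetical about fixed points ``carried forward by one factor'' names the right object but asserts, rather than proves, the decisive estimate.

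What is missing is a two-fold cancellation, and it is the real point of the discrete variant. First, $A_{(m,n)}(E,t+\frac1n)A_{(m,n)}(E,t)^{-1}$ equals the rotation $R_{\theta_{(m,n)}(E,t+\frac1n)-\theta_{(m,n)}(E,t)}$ up to $O(n^{-(m+1)})$: writing $A_{(m,n)}=(\Id+h)R_{\theta_{(m,n)}}$ with $\|h\|_{C^1}=O(n^{-m})$, the non-rotation part of this quotient is $(h(t+\frac1n)-h(t))R_\eta+[h(t),R_\eta-\Id]$ modulo $O(n^{-2m})$, and each term carries both a factor $O(n^{-m})$ and a factor $O(1/n)$. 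Second, left multiplication by a rotation of angle $O(1/n)$ moves $\u(M)$, for $M$ uniformly elliptic and at distance $O(n^{-m})$ from $\SO(2,\R)$, only by $O(n^{-m-1})$: the map $M\mapsto\frac{d}{dr}\u(R_rM)|_{r=0}$ is smooth on the elliptic region and vanishes on $\SO(2,\R)$, hence is $O(\mathrm{dist}(M,\SO(2,\R)))$, and this distance is unchanged by left rotation. Combining the two gives $d\bigl(A_{(m,n)}(E,t)\cdot\u(P(E,t)),\,\u(P(E,t+\frac{n+1}{nN}))\bigr)=O(n^{-(m+1)})$ uniformly with $(E,t)$-derivatives, after which (1)--(3) for $m+1$ follow as you indicate. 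So the lemma is provable along your lines, but only once this estimate is supplied; without it your redistribution stalls at $O(1/n)$. (A minor point: the extra ellipticity you impose on $\theta$, avoiding $0,\frac12$ mod $1$, is not needed; what must stay uniformly elliptic are the blocks $P(E,t)$, whose traces are within $O(1/n)$ of $\tr A^{(N)}(E,t)$, which the hypothesis controls.)
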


\begin{lemma} \label {asd3}

Under the hypothesis of the previous lemma, assume moreover that
$\tilde \theta(E)=\int_{\R/\Z} \theta(E,t) dt$ satisfies
$\frac {d} {dE} \tilde
\theta(E) \neq 0$ for every $E \in J$.
For $n \in \N$, let
$A^{(N * n)}:J \times \R/\Z \to \SL(2,\R)$ be given by
\be
A^{(N * n)}(E,t)=A(E,t+(n N-1) \frac {n+1} {n N})
A(E,t+(n N-2) \frac {n+1} {n N}) \cdots
A(E,t).
\ee
Then there exist functions $\tilde \theta^{(n)}:J \to \R/\Z$ such that for
every measurable subset $Z \subset \R/\Z$,
\be
\lim_{n \to \infty} |\{E \in J,\, \tilde \theta^{(n)}(E) \in Z\}|=|Z| |J|,
\ee
\be
\lim_{n \to \infty} |\{E \in J,\, \tilde \theta^{(n)}(E)+\tilde
\theta^{(2n)}(E) \in Z\}|=|Z| |J|,
\ee
with the following property.  For every $\delta>0$,
\be   
\lim_{n \to \infty} \|\tr A^{(N*n)}(E,t)-2 \cos 2 \pi \tilde
\theta_{(m,n)}(E)\|_{C^0(J \times \R/\Z,\R)}=0,
\ee
\be
\lim_{n \to \infty} \sup_{|\sin 2 \pi \tilde \theta^{(n)}(E)|>\delta}
\|\Theta(A^{(N*n)}(E,\cdot))-
\tilde \theta^{(n)}(E)\|_{C^1(\R/\Z,\R)}=0,
\ee
\be
\lim_{n \to \infty} \sup_{|\sin 2 \pi \tilde \theta^{(n)}(E)|>\delta}
\|\u(A^{(N * n)}(E,\cdot))-\u(A^{(N)}(E,\cdot))\|_{C^1(\R/\Z,\C)}=0.
\ee
   
\end{lemma}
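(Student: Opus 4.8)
The plan is to reduce everything, as in the proof of Lemma~\ref{blaparameter}, to the picture of an almost-rotation with a controlled phase, using the slow-deformation lemma that precedes this statement. Fix $m\ge 3$, and let $B_{(m,n)}$, $\theta_{(m,n)}$, $A_{(m,n)}$ be the functions it produces; recall that $\theta_{(m,n)}$ is uniformly $C^k$ for every $k$, that $\|B_{(m,n)}-B\|_{C^1}\to 0$ and $\|\theta_{(m,n)}-\theta\|_{C^1}\to 0$, and that $A_{(m,n)}$ differs from $R_{\theta_{(m,n)}}$ by $O(n^{-m})$ in $C^1$. Since $nN\cdot\frac{n+1}{nN}=n+1\in\Z$ and $B_{(m,n)}$ is $1$-periodic in $t$, the conjugation $A^{(m,n)}(E,t):=B_{(m,n)}(E,t)A^{(N*n)}(E,t)B_{(m,n)}(E,t)^{-1}$ telescopes into the product $A_{(m,n)}(E,t+(nN-1)\frac{n+1}{nN})\cdots A_{(m,n)}(E,t)$. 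Expanding this product exactly as in Lemma~\ref{blaparameter} (the combinatorics is identical, with $nN$ in the role of $n$) gives $\|A^{(m,n)}-R_{\Theta^{(m,n)}}\|_{C^1(J\times\R/\Z)}\to 0$, where $\Theta^{(m,n)}(E,t)=\sum_{l=0}^{nN-1}\theta_{(m,n)}(E,t+l\frac{n+1}{nN})$.

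Next I would show that $\Theta^{(m,n)}(E,\cdot)$ is asymptotically constant in $t$. Writing $\theta_{(m,n)}(E,\cdot)$ as a Fourier series, the sum over the arithmetic progression $l\frac{n+1}{nN}$, $0\le l\le nN-1$, kills every mode $k$ with $nN\nmid k(n+1)$; since $\gcd(n+1,nN)=\gcd(n+1,N)\le N$, the surviving modes are the nonzero multiples of $nN/\gcd(n+1,N)\ge n$, and their contribution is $o(1)$ in $C^1$ because $\theta_{(m,n)}$ is uniformly $C^3$ (say). Hence, with $\tilde\theta_{(m,n)}(E)=\int_{\R/\Z}\theta_{(m,n)}(E,t)\,dt$, one gets $\sup_{E\in J}\|\Theta^{(m,n)}(E,\cdot)-nN\tilde\theta_{(m,n)}(E)\|_{C^1(\R/\Z)}\to 0$. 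Set $\tilde\theta^{(n)}(E)=nN\tilde\theta_{(m,n)}(E)\bmod 1$. Because $\tilde\theta_{(m,n)}\to\tilde\theta$ in $C^1$ and $\frac{d}{dE}\tilde\theta$ is bounded away from $0$ on $J$, the map $E\mapsto nN\tilde\theta_{(m,n)}(E)$ has derivative of constant sign and size comparable to $n$ with uniformly bounded logarithmic derivative, so its image equidistributes (as in Lemma~\ref{blaparameter}); this gives the first limit. For the second, $\tilde\theta^{(n)}+\tilde\theta^{(2n)}=nN\tilde\theta_{(m,n)}+2nN\tilde\theta_{(m,2n)}$ has derivative comparable to $3nN\frac{d}{dE}\tilde\theta$, again of size $\sim n$ with controlled logarithmic derivative (both $\tilde\theta_{(m,n)}$ and $\tilde\theta_{(m,2n)}$ converge to $\tilde\theta$ in $C^1$), so the same argument applies.

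Finally I would assemble the three convergence statements from the approximations above. Trace being conjugation-invariant, $\tr A^{(N*n)}=\tr A^{(m,n)}$, and combining $\|A^{(m,n)}-R_{\Theta^{(m,n)}}\|_{C^0}\to 0$ with $\|\Theta^{(m,n)}-nN\tilde\theta_{(m,n)}\|_{C^0}\to 0$ gives $\tr A^{(N*n)}(E,t)\to 2\cos 2\pi\tilde\theta^{(n)}(E)$ uniformly. On the region $|\sin 2\pi\tilde\theta^{(n)}(E)|>\delta$ the rotation $R_{\tilde\theta^{(n)}(E)}$ lies, uniformly, in the interior of the elliptic locus away from the parabolic boundary, and $A^{(m,n)}(E,\cdot)$ is $C^1$-close to it; since $A\mapsto\Theta(A)$ and $A\mapsto\u(A)$ are analytic and hence uniformly $C^1$-bounded there, $\Theta(A^{(m,n)}(E,\cdot))\to\tilde\theta^{(n)}(E)$ and $\u(A^{(m,n)}(E,\cdot))\to i$ in $C^1(\R/\Z)$, which (using $\Theta(A^{(N*n)})=\Theta(A^{(m,n)})$) yields the second limit. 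For the third, $\u(A^{(N*n)}(E,t))=B_{(m,n)}(E,t)^{-1}\cdot\u(A^{(m,n)}(E,t))$, and $B_{(m,n)}\to B=\B(A^{(N)})$ in $C^1$ with $\B(A^{(N)}(E,t))^{-1}\cdot i=\u(A^{(N)}(E,t))$, so $\u(A^{(N*n)}(E,\cdot))\to\u(A^{(N)}(E,\cdot))$ in $C^1(\R/\Z)$. The step I expect to be the crux is the Fourier cancellation: one must check that replacing the step $\frac1N$ by $\frac{n+1}{nN}$ does not destroy the interference that washes out the $t$-dependence of $\Theta^{(m,n)}$ — it does not, because $\gcd(n+1,N)$ stays bounded by the fixed $N$, so the surviving Fourier modes still grow like $n$.
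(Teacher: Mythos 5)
Your proposal is correct and is exactly the argument the paper intends: it leaves this lemma's proof to the reader as a ``variation of Lemmas \ref{blainductive} and \ref{blaparameter}'', and your adaptation of the proof of Lemma \ref{blaparameter} (telescoping with $B_{(m,n)}$ using $nN\cdot\frac{n+1}{nN}=n+1\in\Z$, the same expansion bounds with $nN$ factors, and the conjugation-invariance arguments at the end) is the intended one. You also correctly supply the two genuinely new points, namely the exact Fourier cancellation along the progression of step $\frac{n+1}{nN}$ (surviving modes are multiples of $nN/\gcd(n+1,N)\geq n$, so the $C^3$ bounds on $\theta_{(m,n)}$ still kill them) and the equidistribution of $\tilde\theta^{(n)}+\tilde\theta^{(2n)}$ via the derivative $nN(3\tilde\theta'+o(1))$ of constant sign.
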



\subsection{The construction}

In this section, we will interpret a continuous function
$\V:\R/N \Z \to \R$ as a one-parameter family of $N$-periodic potentials
$\V_t(j)=\V(t+j)$.

For $n \in \N$, we define the $n$-crumbling $\V':\R/3 n N \Z \to \R$
of $v$ by
\begin{enumerate}
\item $\V'(t)=\V(\frac {n+1} {n} t)$, $t \in [0,n N]$,
\item $\V'(t)=\V(\frac {2 n+1} {2 n} (t-n N))$, $t \in [n N,3 n N]$.
\end{enumerate}

\begin{lemma} \label {asd4}

Let $\V:\R/N \Z$ be a smooth function which is constant near $0$.
Then for every $\delta>0$, for every $n$
sufficiently large, letting $\V'$ be the $n$-crumbling of $\V$, we have
$|\bigcap_t \Omega(\V_t) \setminus \bigcap_t \Omega(\V'_t)|<\delta$.


\end{lemma}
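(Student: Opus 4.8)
The plan is to recognize, for each parameter $t$, the one-period transfer matrix $A[\V'_t](E)$ of the crumbled potential as a product of two ``slowly deformed'' blocks to which Lemma~\ref{asd3} applies, and then to run an equidistribution plus covering argument. First I would make some reductions. Put $\hat{\V}(u)=\V(Nu)$ on $\R/\Z$ and $\hat A(E,u)=\begin{pmatrix} E-\hat{\V}(u) & -1\\ 1 & 0\end{pmatrix}$, and read the notation of Lemma~\ref{asd3} with this $\hat A$ and this $N$. Then $A^{(N)}(E,u)=A[\V_{Nu}](E)$, so ``$|\tr A^{(N)}(E,u)|<2$ for all $u$'' is exactly $E\in\bigcap_t\Omega(\V_t)$; moreover $\hat A$ is smooth because $\V$ is, and the hypothesis ``$\V$ constant near $0$'' is exactly what makes $\V'$ (whose speed jumps from $\frac{n+1}{n}$ to $\frac{2n+1}{2n}$ at $t\in\{0,nN\}$) smooth and again constant near $0$ on $\R/3nN\Z$. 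Since $\V'_{t+1}$ is the unit left shift of $\V'_t$, the trace $\tr A[\V'_t](E)$ is $1$-periodic in $t$, so $\bigcap_{t\in\R/3nN\Z}\Omega(\V'_t)=\bigcap_{t\in[0,1]}\Omega(\V'_t)$ and it suffices to control $t\in[0,1)$. For such $t$, unwinding the definition of the crumbling gives the exact identity
\be
A[\V'_t](E)=A^{(N*2n)}\!\Bigl(E,\tfrac{2n+1}{2nN}\,t\Bigr)\cdot A^{(N*n)}\!\Bigl(E,\tfrac{n+1}{nN}\,t\Bigr),
\ee
where $A^{(N*n)}$ is as in Lemma~\ref{asd3} and $A^{(N*2n)}$ is the same with $n$ replaced by $2n$: the $nN$ right-most factors traverse $\V$ at speed $\frac{n+1}{n}$ and the $2nN$ left-most at speed $\frac{2n+1}{2n}$, and the constancy near $0$ is what lets the two regimes glue without error.

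Next I would fix a closed interval $J\subset\bigcap_t\Omega(\V_t)$ and apply Lemma~\ref{asd3} on $J$. Its hypotheses hold: $|\tr A^{(N)}(E,u)|<2$ on $J\times\R/\Z$ as above, and $\frac{d}{dE}\tilde\theta(E)\neq0$ because the rotation increments satisfy $\sum_{l=0}^{N-1}\theta(E,u+\tfrac lN)\equiv\Theta(A^{(N)}(E,u))\pmod 1$, so that $N\frac{d}{dE}\tilde\theta(E)=\int_0^1\frac{d}{dE}\Theta(A^{(N)}(E,u))\,du<0$ by the discrete monotonicity lemma (the analogue of Lemma~\ref{bla7}, applied to $A^{(N)}(E,u)=A[\V_{Nu}](E,0,N)$). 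Let $\tilde\theta^{(n)},\tilde\theta^{(2n)}$ be the functions furnished by the lemma. The crux is that the two block arguments differ by $\tfrac{n+1}{nN}t-\tfrac{2n+1}{2nN}t=\tfrac{t}{2nN}=O(1/n)$ uniformly for $t\in[0,1)$, and since $\u(A^{(N)}(E,\cdot))$ is $C^1$ on the compact set $J\times\R/\Z$ the two blocks therefore have, up to $o(1)$, the same fixed point and hence the same conjugating matrix $\B$. Combining this with the convergences $\Theta(A^{(N*n)}(E,\cdot))\to\tilde\theta^{(n)}(E)$ and $\u(A^{(N*n)}(E,\cdot))\to\u(A^{(N)}(E,\cdot))$ of Lemma~\ref{asd3} (valid off the set where $|\sin 2\pi\tilde\theta^{(n)}(E)|\le\delta'$, and likewise with $2n$), and using that these blocks stay uniformly bounded because their fixed points remain in a compact subset of $\H$, I get
\be
\tr A[\V'_t](E)=2\cos 2\pi\bigl(\tilde\theta^{(n)}(E)+\tilde\theta^{(2n)}(E)\bigr)+o(1),
\ee
uniformly for $t\in[0,1)$ and for $E\in J$ outside the exceptional sets. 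Setting $Z_{\delta'}=\{\xi\in\R/\Z:|\sin 2\pi\xi|\le\delta'\}$ (so $|Z_{\delta'}|=O(\delta')$) and $B^{(n)}_J=\{E\in J:\tilde\theta^{(n)}(E)\in Z_{\delta'}\ \text{or}\ \tilde\theta^{(2n)}(E)\in Z_{\delta'}\ \text{or}\ \tilde\theta^{(n)}(E)+\tilde\theta^{(2n)}(E)\in Z_{\delta'}\}$, for $E\in J\setminus B^{(n)}_J$ and $n$ large the last display forces $|\tr A[\V'_t](E)|<2-\tfrac12\delta'^2$ for all $t\in[0,1)$, hence for all $t$ by $1$-periodicity, i.e. $E\in\bigcap_t\Omega(\V'_t)$; while the three equidistribution statements of Lemma~\ref{asd3} give $\limsup_n|B^{(n)}_J|\le 3|Z_{\delta'}|\,|J|=O(\delta')\,|J|$.

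To finish I would cover a compact $K\subset\bigcap_t\Omega(\V_t)$ with $|\bigcap_t\Omega(\V_t)\setminus K|<\delta/2$ by finitely many closed intervals $J_1,\dots,J_r\subset\bigcap_t\Omega(\V_t)$ with $K\subset\bigcup_i J_i$ (here $\bigcap_t\Omega(\V_t)$ is open and of finite length, being contained in $\Omega(\V_0)$), choose $\delta'$ so small that $3|Z_{\delta'}|\sum_i|J_i|<\delta/4$, and conclude that for all $n$ large $\bigcap_t\Omega(\V_t)\setminus\bigcap_t\Omega(\V'_t)\subset\bigl(\bigcap_t\Omega(\V_t)\setminus K\bigr)\cup\bigcup_i B^{(n)}_{J_i}$ has measure $<\delta$. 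The hard part will be the step behind the second display: obtaining the exact splitting of $A[\V'_t](E)$ into the two blocks and verifying that, uniformly in $t$, they share a single conjugating matrix in the limit, since this is exactly what collapses the product of two elliptic matrices into a single rotation by $\tilde\theta^{(n)}+\tilde\theta^{(2n)}$ and lets the equidistribution of those phases do the rest; the remaining points ($1$-periodicity of the trace, $\frac{d}{dE}\tilde\theta\neq0$, uniform boundedness of the blocks, and the covering) are routine.
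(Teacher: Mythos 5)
Your proposal is correct and follows essentially the same route as the paper's proof: the exact splitting $A[\V'_t](E)=A^{(N*2n)}(E,\tfrac{2n+1}{2nN}t)\,A^{(N*n)}(E,\tfrac{n+1}{nN}t)$ for $t\in[0,1]$, the application of Lemma~\ref{asd3}, the observation that the two conjugating matrices nearly coincide so that $\tr A[\V'_t](E)$ is close to $2\cos 2\pi(\tilde\theta^{(n)}(E)+\tilde\theta^{(2n)}(E))$, and then $1$-periodicity of the trace plus the equidistribution of $\tilde\theta^{(n)}$ and $\tilde\theta^{(n)}+\tilde\theta^{(2n)}$ to control the excluded energies. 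You merely make explicit some steps the paper leaves implicit (the verification of $\frac{d}{dE}\tilde\theta\neq 0$ via the discrete monotonicity lemma, and the final covering/measure argument), and these are carried out correctly.
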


\begin{proof}

Fix a compact
interval $J \subset \bigcap_t \Omega(\V_t)$.
Apply Lemma \ref {asd3} to $A(E,t)=\begin{pmatrix} E-v(N t) & -1 \\ 1 & 0
\end{pmatrix}$.  It yields a sequence $\tilde \theta^{(n)}(E)$.

If $\V'$ is the $n$-crumbling of $v$, then for $t \in [0,1]$, we get
$A[\V'_t](E,0,n N)=A^{(N*n)}(E,\frac {n+1} {n N} t)$ and
$A[\V_t](E,n N,3nN)=A^{(N*2 n)}(E,\frac {2 n+1} {2 n N} t)$. 
Thus for $t \in [0,1]$ we have
\be
A[\V_t](E)=A^{(N*2 n)}(E,\frac {2 n+1} {2 n N} t)
A^{(N*n)}(E,\frac {n+1} {n N} t)
\ee

As long as $|\sin 2 \pi \tilde \theta^{(n)}|$ and $|\sin 2 \pi \tilde
\theta^{(2n)}|$ are not too small, we can write, for $t \in [0,1]$,
\be
A[\V'_t](E)=B^{(2 n)}(E,t)^{-1} R_{\theta^{(2 n)}(E,t)}
B^{(2 n)}(E,t)
B^{(n)}(E,t)^{-1} R_{\theta^{(n)}(E,t)} B^{(n)}(E,t),
\ee
where $B^{(m)}=\B(A^{(N*m)}(E,t))$ and
$\theta^{(m)}(E,t)=\Theta(A^{(N*m)}(E,t))$.
Notice that $B^{(n)}$ and $B^{(2 n)}$ are
both $C^1$-close to $\B(A[\V_t](E))$ as functions of $t \in [0,1]$.
Moreover, $\theta^{(n)}(E,t)$ is close to $\tilde \theta^{(n)}(E)$ and
$\theta^{(2n)}(E,t)$ is close to $\tilde \theta^{(2n)}(E)$.

It follows that for $t \in [0,1]$,
$\tr A[\V'_t](E)$ is close to $2 \cos 2 \pi
(\tilde \theta^{(2 n)}(E)+\tilde \theta^{(n)}(E))$.  
Thus, as long as $|\sin 2 \pi (\tilde \theta^{(n)}+\tilde \theta^{(2n)})|$
is not small, we have $|\tr A[\V'_t](E)|<2$ for every $t \in [0,1]$. 
Since $\tr A[\V'_t](E)$ is $1$-periodic, this implies that $|\tr
A[\V'_t(E)|<2$ for all $t$.
\end{proof}

\begin{rem} \label {blaremark}

One also easily gets from this construction,
\be
\sup_t d(u[\V'_t](E),i)<\sup_t d(u[\V_t](E),i)+\delta
\ee
except for a set of $E \in \bigcap_t \Omega(\V_t) \cap \bigcap_t
\Omega(\V'_t)$ of arbitrarily small measure.

\end{rem}

\noindent {\it Proof of Theorem \ref {continuumwm}.}
Starting with a smooth non-constant function $\V^{(0)}:\R/\Z \to \R$,
apply Lemma \ref {asd4} successively to obtain a sequence
$\V^{(k)}:\R/N^{(k)} \Z \to \R$ such that $\V^{(k)}$ is the
$n_k$-crumbling of $\V^{(k-1)}$, and compact sets $\Gamma^{(k)} \subset
\bigcap_t \Omega(\V^{(k)}_t)$
with $\Gamma^{(k)} \subset \Gamma^{(k-1)}$ and $\lim_{k \to \infty}
|\Gamma^{(k)}|>0$.
By taking parameters $n_k$ growing sufficiently fast, we ensure that for $E
\in \Gamma^{(k+1)}$ we have
\be \label {asd7}
\sup_{1 \leq j \leq N^{(k)}} |\sup_t \ln \|A[\V^{(k+1)}_t(E,0,j)\|-
\sup_t \ln \|A[\V^{(k)}_t](E,0,j)\|| \leq \frac {1} {2^k},
\ee
\be \label {asd8}
\sup_t \frac {1} {N^{(k+1)}}
\ln \|A[\V^{(k+1)}_t](E,0,N^{(k+1)})\| \leq  \frac {1} {2^k}.
\ee

We now turn to the dynamical realization.  Let $\tilde N^{(k)}$ be defined
by $\tilde N^{(0)}=1$, $\tilde N^{(k)}=(3 n_k+2) \tilde N^{(k-1)}$.
We first construct $N^{(k)}$-periodic time changes
$F^{(k)}$ of the solenoidal flow on $S^{(k)}=\R/\tilde N^{(k)} \Z$ such
that $\V^{(0)}(p_{S^{(k)},S^{(0)}}(F^{(k)}_t(0)))=\V^{(k)}(t)$.  We first take
$F^{(0)}$ to be just
the solenoidal flow on $S^{(0)}$.  Now define inductively
\be
w_{F^{(k+1)}}(t)=w_{F^{(k)}} 
(t) e^{\rho^{(k+1)}(t)}
\ee
for a suitable function $\rho^{(k+1)}$.  Here it is enough to take
$\rho^{(k+1)}=\ln \frac {n_{k+1}+1} {n_k}$ on $[0,(n_{k+1}+1)
\tilde N^{(k)}]$, $\rho^{(k+1)}=
\ln \frac {2 n_{k+1}+1} {2 n_{k+1}}$ on $[(n_{k+1}+1) \tilde
N^{(k)}+\epsilon,(3 n_{k+1}+2) \tilde N^{(k)}-\epsilon]$,
for suitably small $\epsilon$, and such that
\be
\int_{(n_{k+1}+1) \tilde N^{(k)}}^{(n_{k+1}+1) \tilde N^{(k)}+\epsilon}
\frac {1} {w_{F^{(k)}}(t)
e^{\rho^{(k+1)}(t)}} dt=\frac {2 n_{k+1}} {2 n_{k+1}+1}
\int_0^\epsilon \frac {1} {w_{F^{(k)}}(t)} dt,
\ee
\be
\int_{(3 n_{k+1}+2) \tilde N^{(k)}-\epsilon}^{(3 n_{k+1}+2)
\tilde N^{(k)}} \frac {1} {w_{F^{(k)}}(t)
e^{\rho^{(k+1)}(t)}} dt=\frac {2 n_{k+1}} {2 n_{k+1}+1} \int_{\tilde
N_k-\epsilon}^{\tilde N^{(k)}} \frac {1} {w_{F^{(k)}}(t)} dt.
\ee
Notice that by taking parameters growing sufficiently fast, we can take
$F^{(k+1)}$ close to the lift of $F^{(k)}$.

Let $S$ be the projective limit
of $\R/\tilde N^{(k)} \Z$, and let $v:S \to \R$ be given by
$v(x)=\V^{(0)}(p_{S,S^{(0)}}(x))$.  Let $F_t:S \to S$ be the projective
limit of the $F^{(k)}_t$.
The base dynamics will be the time-one map $F_1$ and the sampling function
will be $v$.

By (\ref {asd7}), for every $k$, if $E \in \Gamma=\bigcap \Gamma^{(k)}$,
\be \label {blaclose3}
\sup_{1 \leq j \leq N^{(k)}} |\sup_x \ln \|A[F_1,v](E,x,0,j)\|-
\sup_t \ln \|A[\V^{(k)}_t](E,0,j))\|| \leq \frac {1} {2^{k-1}},
\ee
and together with (\ref {asd8}) we get, for $E \in \Gamma$
\be
\sup_x \frac {1} {N^{(k+1)}}
\ln \|A[F_1,v](E,x,0,N^{(k+1)})\| \leq
\frac {1} {2^k}+\frac {1} {N^{(k+1)} 2^k}
\leq \frac {1} {2^{k-1}},
\ee
so that the Lyapunov exponent (with respect to any $F_1$-invariant measure)
must vanish over $\Gamma$.

To conclude, let us show that the flow $F$ is weak mixing: This implies
that the discrete dynamics $F_1$ is weak mixing as well, and since $F$ is
minimal and uniquely ergodic, it also implies that $F_1$ is minimal and
uniquely ergodic, so that $\Gamma$ is contained in the essential
support of the absolutely continuous spectrum for every $x$.
In order to do this, we notice that for $0 \leq j \leq n_{k+1}-1$
\be \label {asd1}
p_{S^{(k+1)},S^{(k)}}(F^{(k+1)}_{j N^{(k)}}(t))=
F^{(k)}_{j/n_{k+1}} (p_{S^{(k+1)},S^{(k)}}(t)),
\ee
as long as $t \in
[0,(n_{k+1}+1)
(1-\frac {j} {n_{k+1}}) \tilde N^{(k)}]$.
On the other hand,
for $0 \leq j \leq 2 n_{k+1}-1$
\be \label {asd2}
p_{S^{(k+1)},S^{(k)}}(F^{(k+1)}_{j N^{(k)}}
(t))=F^{(k)}_{j/2 n_{k+1}}(p_{S^{(k+1)},S^{(k)}}(x)),
\ee
as long as $t \in [(n_{k+1}+1) \tilde N^{(k)}+1,
\tilde N^{(k+1)}-\frac {2 n_{k+1}+1} {2 n_{k+1}}j\tilde N^{(k)}-1]$.

The conclusion proceeds along the same line as in Lemma \ref {blamixed2}.
Take a measurable eigenfunction $\psi$ taking values on the unit circle,
associated to an eigenvalue
$\theta \neq 0$, so that $\psi \circ F_t=e^{2 \pi i \theta t} \psi$.
Taking
conditional expectations, we obtain
$\psi^{(j)}$ on $S^{(j)}$, taking values on the closed unit disk, with
$\lim \psi^{(j)}(p_{S,S^{(j)}}(x))=\psi(x)$
for almost every $x$.  We then conclude from (\ref {asd1}) and (\ref {asd2})
that $\frac {\theta j} {2 n_{k+1}}$ is close to an integer for $1 \leq j
\leq [n_{k+1}/2]$.  This contradicts $\theta \neq 0$.
\qed

\begin{rem} \label {blaremark2}

Using Remark \ref {blaremark}, we can ensure in the construction that
\be
C=\sup_k \sup_{E \in \Gamma_k} \sup_t d(u[\V^{(k)}_t](E),i)<\infty.
\ee
This implies that
\be
\sup_k \sup_{E \in \Gamma_k} \sup_t \sup_j \|A[\V^{(k)}_t](E,0,j)\| \leq e^C,
\ee
and by (\ref {blaclose3}),
\be
\sup_{E \in \Gamma} \sup_x \sup_j \|A[F_1,v](E,x,0,j)\| \leq e^C,
\ee
so all eigenfunctions with energies in $\Gamma$ are bounded.

\end{rem}

\end{document}